\theoremstyle{definition}
\newtheorem{definition}{Definition}[section]
\newtheorem{conjecture}[definition]{Conjecture}
\newtheorem{example}[definition]{Example}
\newtheorem{problem}[definition]{Problem}
\newtheorem{remark}[definition]{Remark}
\newtheorem{algorithm}[definition]{Algorithm}
\theoremstyle{plain}
\newtheorem{lemma}[definition]{Lemma}
\newtheorem{proposition}[definition]{Proposition}
\newtheorem{theorem}[definition]{Theorem}
\newcommand{\myfont}[1]{{\textnormal{\small\texttt{#1}}}}
\newcommand{\SO}{\Sigma O}
\newcommand{\BBB}{\myfont{BBB}}
\newcommand{\BW}{\myfont{BW}}
\newcommand{\BWS}{\myfont{BWS}}
\newcommand{\Assoc}{\myfont{Assoc}}
\newcommand{\SAssoc}{\Sigma{\myfont{Assoc}}}
\newcommand{\ComAss}{\myfont{ComAss}}
\newcommand{\DiAss}{\myfont{DiAss}}
\newcommand{\SDiAss}{\Sigma{\myfont{DiAss}}}
\newcommand{\TriAss}{\myfont{TriAss}}
\newcommand{\STriAss}{\Sigma{\myfont{TriAss}}}
\newcommand{\ComTriAss}{\myfont{ComTriAss}}
\newcommand{\Dend}{\myfont{Dend}}
\newcommand{\SDend}{\Sigma{\myfont{Dend}}}
\newcommand{\TriDend}{\myfont{TriDend}}
\newcommand{\STriDend}{\Sigma{\myfont{TriDend}}}
\newcommand{\ComTriDend}{\myfont{ComTriDend}}
\newcommand{\Lie}{\myfont{Lie}}
\newcommand{\Leib}{\myfont{Leib}}
\newcommand{\Zinb}{\myfont{Zinb}}
\newcommand{\TriLie}{\myfont{TriLie}}
\newcommand{\Perm}{\myfont{Perm}}
\newcommand{\PreLie}{\myfont{PreLie}}
\newcommand{\PostLie}{\myfont{PostLie}}
\newcommand{\Jor}{\myfont{Jor}}
\newcommand{\DiJor}{\myfont{DiJor}}
\newcommand{\TriJor}{\myfont{TriJor}}
\newcommand{\PreJor}{\myfont{PreJor}}
\newcommand{\PostJor}{\myfont{PostJor}}
\newcommand{\VectF}{\mathbf{Vect}_\mathbb{F}}
\def\l@section{\@tocline{1}{0pt}{1pc}{}{}}
\def\l@subsection{\@tocline{2}{0pt}{1pc}{4.6em}{}}
\def\l@subsubsection{\@tocline{3}{0pt}{1pc}{7.6em}{}}
\renewcommand{\tocsection}[3]{%
  \indentlabel{\@ifnotempty{#2}{\makebox[1.25em][l]{\ignorespaces#1#2.}}}#3}
\renewcommand{\tocsubsection}[3]{%
  \indentlabel{\@ifnotempty{#2}{\hspace*{1.25em}\makebox[2.00em][l]{\ignorespaces#1#2.}}}#3}
\renewcommand{\tocsubsubsection}[3]{%
  \indentlabel{\@ifnotempty{#2}{\hspace*{3.25em}\makebox[2.75em][l]{\ignorespaces#1#2.}}}#3}
\begin{document}

\title{Jordan Trialgebras and Post-Jordan Algebras}

\author{Fatemeh Bagherzadeh}

\address{Department of Mathematics and Statistics, University of Saskatchewan, Canada}

\email{bagherzadeh@math.usask.ca}

\author{Murray Bremner}

\address{Department of Mathematics and Statistics, University of Saskatchewan, Canada}

\email{bremner@math.usask.ca}

\author{Sara Madariaga}

\address{Department of Mathematics and Computer Science, University of La Rioja, Spain}

\email{sara.madariaga@unirioja.es}

\subjclass[2010]{Primary
17C05. 
Secondary
05A10, 
05C30, 
15A69, 
15B33, 
16W10, 
17-04, 
17A30, 
17A50, 
17C50, 
18D50, 
20C30, 
68R15, 
68W30. 
}

\keywords{Jordan algebras, di-, and tri-algebras;
pre- and post-Jordan algebras;
polynomial identities;
algebraic operads;
triplicators;
trisuccessors;
Koszul duality;
combinatorics of binary trees;
computer algebra;
linear algebra over prime fields;
representation theory of symmetric groups}

\thanks{The authors thank Vladimir Dotsenko for numerous helpful discussions.
The authors were supported by a Discovery Grant from NSERC,
the Natural Sciences and Engineering Research Council of Canada}

\begin{abstract}
We compute minimal sets of generators for the $S_n$-modules ($n \le 4$) of multilinear polynomial identities
of arity $n$ satisfied by the Jordan product and the Jordan diproduct (resp.~pre-Jordan product) in every
triassociative (resp.~tridendriform) algebra.
These identities define Jordan trialgebras and post-Jordan algebras: Jordan analogues of the Lie trialgebras
and post-Lie algebras introduced by Dotsenko et al., Pei et al., Vallette \& Loday.
We include an extensive review of analogous structures existing in the literature, and their interrelations,
in order to identify the gaps filled by our two new varieties of algebras.
We use computer algebra (linear algebra over finite fields, representation theory of symmetric groups), to verify
in both cases that every polynomial identity of arity $\le 6$ is a consequence of those of arity $\le 4$.
We conjecture that in both cases the next independent identities have arity 8, imitating the Glennie identities
for Jordan algebras.
We formulate our results as a commutative square of operad morphisms, which leads to the conjecture that
the squares in a much more general class are also commutative.
\end{abstract}

\maketitle


\vspace{-6mm}

{
\small
\tableofcontents
}


\section{Introduction}


\subsection{Operadic generalization of Lie and Jordan algebras}

Lie and Jordan algebras are defined by the polynomial identities of arity $n \le 3, 4$ satisfied
by the (anti)commutator in every associative algebra.
Lie dialgebras (Leibniz algebras) were introduced in the early 1990s \cite{Loday1993} together with
diassociative algebras and the (anti)dicommutator; Jordan dialgebras (quasi-Jordan algebras)
appeared 10 years later \cite{VF}.
Dendriform algebras, governed by the Koszul dual of the diassociative operad, were introduced in the late 1990s
\cite{LR}; in this case, the (anti)dicommutator produces pre-Lie
\cite{Gerstenhaber,Vinberg} and pre-Jordan \cite{HNB} algebras.
These constructions stimulated the theory of algebraic operads and were reformulated in terms
of Manin white and black products.
It was then a short step to triassociative and tridendriform algebras, and their Lie analogues,
Lie trialgebras \cite{DG} and post-Lie algebras \cite{Vallette1}.

In this paper, we investigate the Jordan analogues, and define Jordan trialgebras and post-Jordan algebras%
\footnote{One choice of defining relations for post-Jordan algebras appears in Appendix A.5 of the \url{arXiv}
version of Bai et al.~\cite{BBGN}, where these relations are presented as defining the trisuccessor of the
Jordan operad.  That Appendix does not appear in the published version of the paper, and in any case, our
methods to obtain them are entirely different.}.
For $X \in \{ \mathrm{Lie}, \mathrm{Jordan} \}$, $X$-trialgebras combine $X$-algebras and $X$-dialgebras in
one structure; post-$X$ algebras combine $X$-algebras and pre-$X$ algebras in one structure.
Our methods are primarily computational; we determine the multilinear identities of arity $\le 6$ satisfied by
the anticommutator and antidicommutator in every triassociative and tridendriform algebra.
We use combinatorics of trees, linear algebra over finite fields, and representation theory of
symmetric groups.
The identities form the nullspace of what we call expansion matrix, which represents (with respect to
ordered monomial bases) a morphism from an operad of Jordan type to one of associative type.
The defining identities have arity $\le 4$ in both cases, and we verify that there
are no new identities of arity 5 or 6.


\begin{table}[ht]
\small
\boxed{
\begin{tabular}{l|l}
\textbf{ASSOCIATIVE ALGEBRAS}
&
Self-dual
\\[-1pt]
Operation $ab$ Relation $(ab)c \equiv a(bc)$
\\ \midrule
Lie bracket $[a,b] = ab - ba$
&
symmetry $ab \equiv ba$, relation $(ab)c \equiv a(bc)$
\\[-1pt]
\textbf{Lie algebras}, equation \eqref{lieidentities}
&
\textbf{Commutative associative algebras}
\\ \midrule
Jordan product $a \,\circ\, b = ab + ba$
\\[-1pt]
\textbf{Jordan algebras}, equation \eqref{jordanidentities}
&
No dual, operad is cubic not quadratic
\\ \midrule
\textbf{DIASSOCIATIVE ALGEBRAS}
&
\textbf{DENDRIFORM ALGEBRAS}
\\[-1pt]
Operations $a \dashv b$, $a \vdash b$, Definition \ref{diassdendrdef}
&
Operations $a \prec b$, $a \succ b$, Definition \ref{diassdendrdef}
\\ \midrule
Leibniz bracket $\{a,b\} = a \dashv b - b \vdash a$
\\[-1pt]
\textbf{Leibniz algebras}, Definition \ref{leibnizdef}
&
\textbf{Zinbiel algebras}, Definition \ref{leibnizdef}
\\ \midrule
&
Pre-Lie product $\{ a, b \} = a \prec b - b \succ a$
\\[-1pt]
\textbf{Perm algebras}, Definition \ref{preliedef}
&
\textbf{Pre-Lie algebras}, Definition \ref{preliedef}
\\ \midrule
Jordan diproduct $a \,\circ\, b = a \dashv b + b \vdash a$
\\[-1pt]
\textbf{Jordan dialgebras}, Definition \ref{jordialgdef}
&
No dual, operad is cubic not quadratic
\\ \midrule
&
Pre-Jordan product $a \bullet b = a \prec b + b \succ a$
\\[-1pt]
No dual, operad is cubic not quadratic
&
\textbf{Pre-Jordan algebras},
Definition \ref{jordialgdef}
\\
\midrule
\textbf{TRIASSOCIATIVE ALGEBRAS}
&
\textbf{TRIDENDRIFORM ALGEBRAS}
\\[-1pt]
Operations $a \dashv b$, $a \perp b$, $a \vdash b$
&
Operations $a \prec b$, $a \curlywedge b$, $a \succ b$
\\[-1pt]
Definition \ref{triassdef}
&
Definition \ref{triassdef}
\\ \midrule
Lie bracket $[a,b] = a \perp b - b \perp a$
&
\\[-1pt]
Leibniz bracket $\{ a, b \} = a \dashv b - b \vdash a$
&
\textbf{Commutative tridendriform algebras},
\\[-1pt]
\textbf{Lie trialgebras}, Definition \ref{comtridendef}
&
Definition \ref{comtridendef}
\\ \midrule
&
Lie bracket $[a,b] = a \curlywedge b - b \curlywedge a$
\\[-1pt]
\textbf{Commutative triassociative algebras},
&
Pre-Lie product $\{ a, b \} = a \prec b - b \succ a$
\\[-1pt]
Definition \ref{comtriassdef}
&
\textbf{Post-Lie algebras}, Definition \ref{comtriassdef}
\\ \midrule
Jordan product $a \,\circ\, b = a \perp b + b \perp a$
\\[-1pt]
Jordan diproduct $a \bullet b = a \dashv b + b \vdash a$
\\[-1pt]
\textbf{Jordan trialgebras}, Section \ref{jordantrialgebrasection}
&
No dual, operad is cubic not quadratic
\\ \midrule
&
Jordan product $a \,\circ\, b = a \curlywedge b + b \curlywedge a$
\\[-1pt]
&
Pre-Jordan product $a \bullet b = a \prec b + b \succ a$
\\[-1pt]
No dual, operad is cubic not quadratic
&
\textbf{Post-Jordan algebras}, Section \ref{postjordanalgebrasection}
\end{tabular}
}
\medskip
\caption{Operadic generalizations of associative, Lie, and Jordan algebras}
\vspace{-6mm}
\label{varietiesofnonassociativealgebras}
\end{table}


\subsection{Overview of problems and methods}

Table \ref{varietiesofnonassociativealgebras} displays the generalizations of associativity underlying our
results; the number of operations increases from top to bottom.
The left column contains operads obtained from associative operations (di- and triassociative); the right column
contains their Koszul duals (dendriform and tridendriform).
These operads are nonsymmetric (but will be symmetrized); each of them leads to generalizations of Lie and
Jordan algebras, defined by the polynomial identities of arity $\le 3, 4$ satisfied by the analogues of
the (anti)commutator and (anti)dicommutator.
All the operads we consider are generated by binary operations, with or without symmetry.

\subsubsection{The symmetric operads $\STriAss$, $\STriDend$ and $\BW$}

We are primarily concerned with three operads; the first two form a Koszul dual pair:
\begin{itemize}
\item
The symmetric triassociative operad $\STriAss$ (the symmetrization of $\TriAss$)
which is generated by three binary operations $\dashv, \perp, \vdash$ with no symmetry, satisfying seven quadratic relations; see Definition \ref{triassdef}.
\item
The symmetric tridendriform operad $\STriDend$ (the symmetrization of $\TriDend$) which
is generated by three binary operations  $\prec, \curlywedge, \succ$ with no symmetry, satisfying 11 quadratic relations; see Definition \ref{triassdef}.
\item
The free symmetric operad $\BW$ generated by two binary operations, one commutative $\circ$ and one
noncommutative (with no symmetry) $\bullet$.
The operads governing Jordan trialgebras and post-Jordan algebras are quotients of $\BW$.
\item
The expansion map $E(n)\colon \BW(n) \to\STriAss(n)$ is defined by
$a \,\circ\, b \mapsto a \perp b + b \perp a$
and
$a \bullet b \mapsto a \dashv b + b \vdash a$;
its kernel contains the defining identities for Jordan trialgebras.
\item
The expansion map $E(n)\colon \BW(n) \to\STriDend(n)$ is defined by
$a \,\circ\, b \mapsto a \curlywedge b + b \curlywedge a$
and
$a \bullet b \mapsto a \prec b + b \succ a$;
its kernel contains the defining identities for post-Jordan algebras.
\end{itemize}
The standard monograph on the theory of algebraic operads is Loday \& Vallette \cite{LV}.


\subsubsection{Computational methods}

All computations are performed with Maple using arithmetic over $\mathbb{Z}$ or $\mathbb{Q}$ or a finite field.
To save time and memory we usually work over a finite prime field $\mathbb{F}_p$,
where $p$ is greater than the arity $n$ of the identities being considered;
this guarantees that the group algebra $\mathbb{F}_p S_n$ is semisimple.
In arity $n$, both the domain and the codomain of the expansion map are $S_n$-modules,
and so if $p > n$ then semisimplicity of $\mathbb{F}_p S_n$
guarantees that it is isomorphic to the direct sum of simple two-sided ideals each of them isomorphic to a full matrix algebra.
The coefficients in the formulas for the matrix units in $\mathbb{F}_p S_n$ as
linear combinations of permutations have denominators which are divisors of $n!$ and hence are well-defined modulo $p$.
We are left with the problem of reconstructing correct rational results from modular
calculations, but in this respect we had very good luck: all matrix entries are in $\{ 0, \pm 1 \}$.
Using the representation theory of the symmetric group allows us to decompose a large matrix into much smaller pieces.


\subsubsection{Terminology and notation}

The number of arguments in a monomial is often called its \emph{degree}; here we use \emph{arity}:
for algebraic operads, \emph{degree} refers to (homological) degree in a differential graded vector space.
From the homological point of view, all our vector spaces are graded but concentrated in degree 0
over a base field $\mathbb{F}$ of characteristic 0, unless otherwise stated.
We write $\VectF$ for the category of vector spaces over $\mathbb{F}$.
If $X$ is a set then $\mathbb{F}X$ is the vector space%
\footnote{From a categorical perspective, the functor $X \mapsto \mathbb{F}X$ is left adjoint to
the forgetful functor $\VectF \to \mathbf{Sets}$; that is, $\mathbb{F}X$ is the free vector space
over $\mathbb{F}$ generated by $X$.}
with basis $X$.
If $\mathcal{O}$ is a nonsymmetric operad then $\Sigma\mathcal{O}$ denotes its symmetrization.


\subsection{Associative, Lie, and Jordan algebras}

An associative algebra is a vector space $A$ with a bilinear product $m\colon A \times A \to A$,
$(a,b) \mapsto ab$, satisfying the relation $(ab)c - a(bc) \equiv 0$ which is nonsymmetric
(hence multilinear): every term has the identity permutation of the arguments.
The symbol $\equiv$ indicates that equality holds for all values of the arguments.

In every associative algebra, the commutator (Lie bracket) $[a,b] = ab - ba$ is antisymmetric and
satisfies the Jacobi identity; these properties define Lie algebras:
  \begin{equation}
  \label{lieidentities}
  [a,a] \equiv 0, \qquad [[a,b],c] + [[b,c],a] + [[c,a],b] \equiv 0.
  \end{equation}
In every associative algebra, the anticommutator (Jordan product) $a \,\circ\, b = ab + ba$ is
symmetric and satisfies the Jordan identity; these properties define Jordan algebras:
  \begin{equation}
  \label{jordanidentities}
  a \,\circ\, b \equiv b \,\circ\, a,
  \qquad
  ( ( a \,\circ\, a ) \,\circ\, b ) \,\circ\, a \equiv ( a \,\circ\, a ) \,\circ\, ( b \,\circ\, a ).
  \end{equation}
Let $L$ be a Lie algebra with bracket $[-,-]$, let $J$ be a Jordan algebra with product $a \circ b$,
and let $X$ be either $L$ or $J$.
Then $X$ has a universal associative enveloping algebra $U(X)$ in the following sense:
if $f\colon X \to A$ is a linear map to an associative algebra $A$ then there is a unique algebra morphism
$g\colon U(X) \to A$ such that $g \,\circ\, h = f$ where $h\colon X \to U(X)$ is the natural map arising
as follows.
If $T(V)$ is the tensor algebra with product $a \cdot b$ of the vector space $V$,
then $U(X) \cong T(X) / I(X)$ for the following (two-sided) ideals:

\vspace{1mm}

\centerline{
$I(L) = \langle \, a \cdot b - b \cdot a - [a,b] \mid a, b \in L \, \rangle$,
\quad\quad
$I(J) = \langle \, a \cdot b + b \cdot a - a \,\circ\, b \mid a, b \in J \, \rangle$.
}

\vspace{1mm}

\noindent
For every Lie algebra $L$, $h$ is injective; this follows from the Poincar\'e-Birkhoff-Witt theorem
and implies that every polynomial identity satisfied by the commutator in every associative algebra is
a consequence of skewsymmetry and the Jacobi identity.
The same does not hold for Jordan algebras.
A Jordan algebra $J$ is \emph{special} if it is isomorphic to a subalgebra of $A^+$ for some associative
algebra $A$, where $A^+$ is the same vector space with the nonassociative product $a \,\circ\, b = ab + ba$.
For a Jordan algebra $J$, the natural map $h\colon J \to U(J)$ is injective if and only if $J$ is special.
There are polynomial identities satisfied by every special Jordan algebra which are not consequences
of symmetry and the Jordan identity \cite{Glennie}; the smallest examples are the so-called Glennie identities and occur in arity 8.
Quotients of special Jordan algebras are not necessarily special \cite{Cohn}, so
special Jordan algebras do not form a variety defined by polynomial identities.
\begin{figure}[h]
\centering
\includegraphics[scale=1]{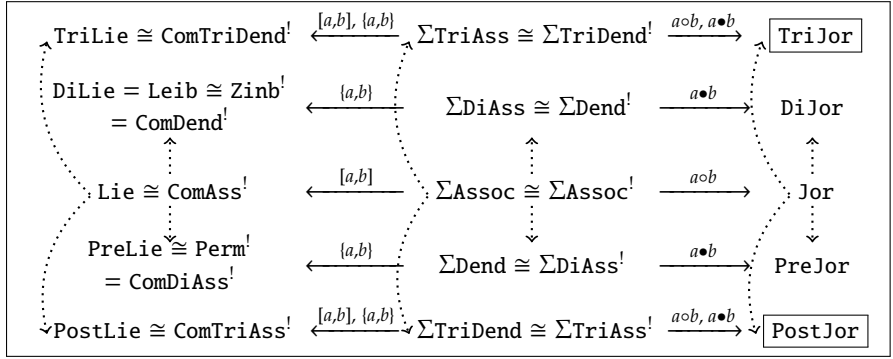}
\vspace{-4mm}
\caption{Generalizations of the Lie, associative, and Jordan operads}
\vspace{-2mm}
\label{generalizingoperads}
\end{figure}

\subsection{Results of this paper in context}

In later sections, we include concise reviews of analogous structures existing in the literature, and their interrelations,
in order to identify the gaps which motivated the writing of this paper.
Table \ref{generalizingoperads} shows the algebraic operads with which we are concerned.
Our results provide definitions of Jordan trialgebras and post-Jordan algebras which are indicated by boxes.
(A somewhat similar diagram appears in the lecture notes \cite{Kolesnikov2}.)
The dotted straight (resp.~curved) up-down arrows indicate the white (resp-~black) Manin products with the operads
$\Perm$ and $\PreLie$ (resp.~$\ComTriAss$ and $\PostLie$); see \S\ref{sectionManin}.
These procedures may also be described as duplicators and disuccessors (resp. triplicators and trisuccessors);
see \S\ref{triplicatorsection}, \S\ref{trisuccessorsection}.
Reversing an arrow corresponds to realizing a simpler structure as a subalgebra or quotient of a more
complex structure.
The left-right arrows represent morphisms between operads which replace the (generalized) associative product
by one or two (generalized) Lie brackets or Jordan products.
We use the symbols $[a,b]$ and/or $\{a,b\}$ for the Lie case,
$a \circ b$ and/or $a \bullet b$ for Jordan.
Reversing a left-right arrow represents constructing the universal enveloping algebra of the
corresponding Lie or Jordan structure.
All these operads are binary: the generating operations are bilinear.
In columns 1, 2 the operads are quadratic: the terms of the relations contain two operations.
In column 3, the operads are cubic: the terms contain three operations.
Koszul duality for quadratic operads can be applied to columns 1, 2 but not 3.


\section{Preliminaries on algebraic operads}


\subsection{Free nonsymmetric binary operads}

Let $\Omega = \{ \circ_1, \dots, \circ_m \}$ be a finite ordered set of binary operations;
let $O_m$ be the free nonsymmetric operad in $\VectF$ generated by $\mathbb{F}\Omega$.
The underlying vector space of $O_m$ is the direct sum over $n \ge 1$ of homogeneous components $O_m(n)$
of arity $n$.
We never add elements of different arities, so we could replace the direct sum by a disjoint union
(but then we would leave $\VectF$).
The following construction of $O_m$ also makes sense in the category \textbf{Set} of sets.
We define sets $X$ of operations of arity $n$ and then consider the vector spaces $\mathbb{F}X$.
Disjoint union (resp.~direct product) of sets corresponds to direct sum (resp.~tensor product) of vector spaces.
The operad $O_m$ in \textbf{Set} may be identified with the free magma on one generator with $m$ binary operations.

\begin{definition}
For $n \ge 1$, let $P_n = \{ p_1, \dots, p_{c(n)} \}$ be the \textbf{association types} of arity $n$:
the distinct placements of balanced pairs of parentheses for a single binary operation in a sequence of
$n$ arguments.
If we omit the outermost pair then for $n \ge 2$ there are $n{-}2$ pairs.
The size of $P_n$ is the Catalan number $c(n) = (2n{-}2)!/(n!(n{-}1)!)$.
We write $P = \bigcup_{n \ge 1} P_n$; the set $P$ is a basis for $\mathcal{O}_1$, the free nonsymmetric operad
on one binary operation.
\end{definition}

\begin{example} \label{exampleplacement}
We write $x$ for the arguments and juxtaposition for the operation.
For $n = 1$ we have only $x$, for $n = 2$ only $xx$, and for $n = 3$ only $(xx)x$, $x(xx)$.
For $n = 4, 5$ we have:
\[
\begin{array}{c@{\qquad}l}
n = 4 &
((xx)x)x, \;\; (x(xx))x, \;\; (xx)(xx), \;\; x((xx)x), \;\; x(x(xx)).
\\
n = 5 &
(((xx)x)x)x, \;\; ((x(xx))x)x, \;\; ((xx)(xx))x, \;\; (x((xx)x))x, \;\; (x(x(xx)))x,
\\
&
((xx)x)(xx), \;\; (x(xx))(xx), \;\; (xx)((xx)x), \;\; (xx)(x(xx)),
\\
&
x(((xx)x)x), \;\; x((x(xx))x), \;\; x((xx)(xx)), \;\; x(x((xx)x)), \;\; x(x(x(xx))).
\end{array}
\]
\end{example}

\begin{definition}
The \textbf{revdeglex} order $\prec$ on $P$ is defined as follows.
If $f \in P_n$, $f' \in P_{n'}$, $n < n'$ then $f \prec f'$.
If $f, f' \in P_n$ then we proceed by induction.
If $n \le 2$ then $| P_n | = 1$ and $f = f'$.
If $n \ge 3$ then there are unique factorizations $f = gh$, $f' = g'h'$;
we set $f \prec f'$ if and only if either $h \prec h'$, or $h = h'$, $g \prec g'$.
(This order was used in Example \ref{exampleplacement}.)
\end{definition}

If $m \ge 2$ then we must distinguish the $m$ operation symbols in $\Omega$.
An element of $P_n$ contains $n{-}1$ multiplications ordered from left to right,
and each may be replaced by any element of $\Omega$.
We identify the $m^{n-1}$ possibilities with $\Omega^{n-1}$ whose lex order $\prec_{\Omega^{n-1}}$ is
induced by $\Omega$: if $\alpha, \beta \in \Omega^{n-1}$ then $\alpha \prec_{\Omega^{n-1}} \beta$ if and only if
$\alpha_i \prec_\Omega \beta_i$ where $i$ is minimal for $\alpha_i \ne \beta_i$.

\begin{definition}
Operations are independent of association types, and so we identify a basis of $O_m(n)$ with
$P_n^{(m)} = P_n \,\times\, \Omega^{n-1}$.
The elements of $P_n^{(m)}$ are the \textbf{skeletons} of arity $n$; each skeleton is an $n$-tuple
$( \, p_i; \, {\circ}_{j_1}, \, \dots, \, {\circ}_{j_{n-1}} \, )$
which represents the basis element of $O_m(n)$ with association type $p_i$
and operations ${\circ}_{j_1}, \, \dots, \, {\circ}_{j_{n-1}}$ from left to right.
The total orders on $P_n$ and $\Omega^{n-1}$ extend to $P_n^{(m)}$:
for $p, p' \in P_n$ and $\alpha, \alpha' \in \Omega^{n-1}$ we have $( p, \alpha ) \prec ( p', \alpha' )$
if and only if either $p \prec_P p'$, or $p = p'$, $\alpha \prec_{\Omega^{n-1}} \alpha'$.
\end{definition}

\begin{definition}
\label{composeskeletons}
To avoid confusion with the operations in $\Omega$, we denote the \textbf{substitution maps} in $O_m(n)$ by an asterisk
instead of a circle,
$\ast_i\colon O_m(n) \otimes O_m(n') \rightarrow O_m(n{+}n'{-}1)$ for $1 \le i \le n$.
For skeletons $f \in P_n^{(m)} \subseteq O_m(n)$ and $f' \in P_{n'}^{(m)} \subseteq O_m(n')$,
the skeleton
\[
f \ast_i f' \in P_{n+n'-1}^{(m)} \subseteq O_m(n{+}n'{-}1),
\]
is obtained by substituting $f'$ for the $i$-th argument of $f$.
These definitions extend bilinearly to $O_m$, and the resulting substitutions generate all compositions in $O_m$.
\end{definition}

\begin{definition}
\label{nonsymdefinition}
We may now state precisely that the \textbf{free nonsymmetric operad} with $m$ binary operations $\Omega$
with no symmetry is the direct sum $O_m$ of the finite dimensional vector spaces $O_m(n)$ with
the compositions generated by the substitutions $\ast_i$ of Definition \ref{composeskeletons}.
\end{definition}


\subsection{Free symmetric binary operads}

The free nonsymmetric operad $O_m$ has a basis of skeletons containing parentheses and operation symbols;
arguments are denoted by $x$.

\begin{definition}
A \textbf{multilinear monomial} of arity $n$ is a skeleton $s \in P_n^{(m)}$ in which the argument symbols
have been replaced by a permutation of $x_1, \dots, x_n$.
We identify the set of such monomials with $W_n^{(m)} = P_n^{(m)} \times S_n$ where $S_n$ is the symmetric group,
and define $\SO_m(n)$ to be the vector space with basis $W_n^{(m)}$ on which there is a right $S_n$-action.
The direct sum $\SO_m$ of the homogeneous components $\SO_m(n)$ is the \textbf{symmetrization} of $O_m$.
In fact, $\SO_m$ is the (underlying vector space of the) \textbf{free symmetric operad} on $m$ binary operations
with no symmetry.
Definition \ref{composeskeletons} shows how to compose skeletons, but in $\SO_m$ we also need to compose monomials:
if $\alpha, \alpha'$ are monomials of arities $n, n'$ with arguments $x_1, \dots, x_n$ and $x_1, \dots, x_{n'}$,
then $\alpha \ast_i \alpha'$ ($1 \le i \le n$) is a monomial of arity $n{+}n'{-}1$ with arguments
$x_1, \dots, x_{n+n'-1}$.
For further information, see \S5.3.2 of \cite{LV}.
A \textbf{regular} operad is a symmetric operad (free or not) which is the symmetrization of a nonsymmetric operad.
\end{definition}


\subsection{Identical relations and operad ideals}

Let $P$ be a free symmetric operad generated by $P(2)$; that is, binary operations with or without symmetry.
The symmetries of the operations are determined by the $S_2$-module structure of $P(2)$, which is the direct
sum of $S_2$-modules isomorphic to either the unit module $[+]$ (representing commutativity),
the sign module $[-]$ (anticommutativity), or the group algebra $\mathbb{F} S_n$ (no symmetry).
(Since $\mathbb{F} S_n \cong [+] \oplus [-]$, an operation with no symmetry can be polarized into two operations,
one commutative and one anticommutative, but this will not concern us.)
We impose an $S_3$-module of quadratic relations $R \subseteq P(3)$ on the operations generating $P$.

\begin{definition}
The \textbf{operad ideal} $I = \langle R \rangle$ generated by the relations $R$ is the smallest sum
$\bigoplus_{n \ge 1} I(n)$ of $S_n$-submodules $I(n) \subseteq P(n)$ which is closed under arbitrary
compositions with elements of $P$.
That is, if $f \in I(r)$ and $g \in P(s)$ then $f \ast_i g \in I(r{+}s{-}1)$ for $1 \le i \le r$, and
$g \ast_j f \in I(r{+}s{-}1)$ for $1 \le j \le s$.
The \textbf{quotient operad} $P/I$ has homogeneous components $P(n)/I(n)$ with compositions defined
in the natural way.
\end{definition}

For given relations $R$, we construct a set of $S_n$-module generators of $I(n)$ for all $n$.
The $S_3$-module $R = I(3)$ is generated by a finite set of relations.
Assume that we have already constructed a finite set of $S_n$-module generators of $I(n)$.
If $f$ is such a generator then we increment the arity by composing $f( x_1, \dots, x_n )$
with some  $g( x_1, x_2 ) \in P(2)$.
By multilinearity, we may assume that $g( x_1, x_2 ) = x_1 \circ x_2$ where $\circ$ is one of the
generating operations.
There are $n$ possibilities for $f \ast_i g$ and two for $g \ast_j f$;
since every relation must be multilinear, we change subscripts of some arguments:
\begin{equation}
\label{operadcompositions}
\left\{ \;
\begin{array}{l}
f \ast_i g =
f( x_1, \dots, x_n ) \ast_i g( x_1, x_2 ) =
f( x_1, \dots, x_{i-1}, x_i \,\circ\, x_{n+1}, x_{i+1}, \dots, x_n )
\\[1mm]
g \ast_j f =
g( x_1, x_2 ) \ast_j f( x_1, \dots, x_n ) =
\left\{
\begin{array}{ll}
\!\!
f( x_1, \dots, x_n ) \,\circ\, x_{n+1} &\;\; (j = 1)
\\
\!\!
x_{n+1} \,\circ\, f( x_1, \dots, x_n ) &\;\; (j = 2)
\end{array}
\right.
\end{array}
\right.
\end{equation}
As $f$ runs over the set of $S_n$-module generators of $I(n)$, the elements \eqref{operadcompositions}
form a set of $S_{n+1}$-module generators for $I(n{+}1)$.


\subsection{Koszul duality for quadratic operads}

For any category of algebras governed by a quadratic operad, one can construct its Koszul dual operad, which
governs another category of algebras.
For associative algebras, the nonsymmetric operad is self-dual: $\Assoc^! \cong \Assoc$.
The symmetric operads $\ComAss$ for commutative associative algebras and $\Lie$ for Lie algebras
form a Koszul dual pair: $\ComAss^! = \Lie$, $\Lie^! = \ComAss$.
(If $P$ is a quadratic operad then $(P^!)^! \cong P$.)
If we generalize associativity to more than one binary operation (diassociative, triassociative, \dots),
the quadratic operads are not self-dual, and define further generalizations of associativity (dendriform,
tridendriform, \dots).

Loday showed that for an operad (symmetric or nonsymmetric) generated by binary operations
with no symmetry, the relations for the Koszul dual can be obtained from the original relations
by means of elementary linear algebra; see Theorem 8.5 and Proposition B.3 of \cite{Loday2001} and
Table 2 of \cite{BSO}.
In this paper we give two examples, one of which extends the algorithm to operations with symmetry;
see Lemmas \ref{TATDduality} and \ref{lietriops}.


\subsection{Diassociative and dendriform algebras}

\begin{definition}
\label{diassdendrdef}
\cite{Loday1995a}, \cite{LR}, \cite{Loday2001}.
A vector space with bilinear operations $\dashv$, $\vdash$ is a \textbf{diassociative algebra} if it satisfies these relations:
\begin{itemize}
\item
left and right associativity: \;
$( a \dashv b ) \dashv c \equiv a \dashv ( b \dashv c )$, \;
$( a \vdash b ) \vdash c \equiv a \vdash ( b \vdash c )$,
\item
inner associativity: \;
$( a \vdash b ) \dashv c \equiv a \vdash ( b \dashv c )$,
\item
left and right bar identities: \;
$( a \dashv b ) \vdash c \equiv ( a \vdash b ) \vdash c$, \;
$a \dashv ( b \dashv c ) \equiv a \dashv ( b \vdash c )$.
\end{itemize}
A vector space with bilinear operations $\prec$, $\succ$ is a \textbf{dendriform algebra}
if it satisfies:
\begin{itemize}
\item
inner associativity: \;
$( a \succ b ) \prec c \equiv a \succ ( b \prec c )$,
\item
left-right symmetrization: \;
$( a \prec b ) \prec c \equiv a \prec ( b \prec c ) + a \prec ( b \succ c )$,
\item
right-left symmetrization: \;
$a \succ ( b \succ c ) \equiv ( a \succ b ) \succ c + ( a \prec b ) \succ c$.
\end{itemize}
\end{definition}

\begin{remark}
The diassociative (but not dendriform) relations have the form $m_1 \equiv m_2$ for monomials $m_1, m_2$.
Dendriform algebras are related to Rota-Baxter operators \cite{Aguiar}, \cite{Ebrahimi-Fard}.
\end{remark}

\begin{lemma}
\cite{LR}, \cite{Loday2001}.
The operation $a \cdot b = a \prec b + a \succ b$ is associative in every dendriform algebra.
The operads $\DiAss$ $(\SDiAss)$ and $\Dend$ $(\SDend)$ are Koszul duals.
\end{lemma}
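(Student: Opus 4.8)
The plan is to prove the two assertions separately: first the direct computation that $a \cdot b = a \prec b + a \succ b$ is associative, then the Koszul duality via Loday's linear-algebra criterion. For associativity, I would expand both sides by distributing over the sum. The left side $(a \cdot b) \cdot c$ becomes the four products $(a \prec b) \prec c$, $(a \succ b) \prec c$, $(a \prec b) \succ c$, $(a \succ b) \succ c$, while the right side $a \cdot (b \cdot c)$ becomes $a \prec (b \prec c)$, $a \prec (b \succ c)$, $a \succ (b \prec c)$, $a \succ (b \succ c)$. Now I apply the three dendriform relations of Definition \ref{diassdendrdef}: left-right symmetrization rewrites $(a \prec b) \prec c$ as $a \prec (b \prec c) + a \prec (b \succ c)$; inner associativity rewrites $(a \succ b) \prec c$ as $a \succ (b \prec c)$; and right-left symmetrization rewrites $a \succ (b \succ c)$ as $(a \succ b) \succ c + (a \prec b) \succ c$. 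After substitution, both sides collapse to the same five-term expression $a \prec (b \prec c) + a \prec (b \succ c) + a \succ (b \prec c) + (a \prec b) \succ c + (a \succ b) \succ c$. This step is entirely routine.

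For the duality, I would invoke the linear-algebra description of the Koszul dual of a binary quadratic nonsymmetric operad, as recalled above from \cite{Loday2001}. Both operads are generated by a two-dimensional space $V = P(2)$ of operations with no symmetry; for $\DiAss$ I take the basis $\{ \dashv, \vdash \}$, and for $\Dend$ the dual basis, which I identify with $\{ \prec, \succ \}$. The arity-three component of the free operad on $V$ splits as $V \otimes V$ in the left-associated type $\circ_1$ plus $V \otimes V$ in the right-associated type $\circ_2$, a space of dimension $4 + 4 = 8$. I would write out the five diassociative relations of Definition \ref{diassdendrdef} as vectors in this eight-dimensional space, obtaining $R_{\DiAss}$ with $\dim R_{\DiAss} = 5$, and the three dendriform relations as $R_{\Dend}$ with $\dim R_{\Dend} = 3$. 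The dimension count $5 + 3 = 8$ is the necessary sanity check for the claim $\Dend = \DiAss^!$.

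The substantive step is to verify orthogonality under the Koszul pairing. On the eight-dimensional arity-three space one has the pairing that is diagonal in the association-type decomposition, multiplicative in the two tensor factors, and carries the characteristic sign $\langle \mu \circ_2 \nu, \mu^* \circ_2 \nu^* \rangle = - \langle \mu, \mu^* \rangle \langle \nu, \nu^* \rangle$ on the right-associated summand, with the $\circ_1$ summand unsigned and the cross terms zero. I would check that each dendriform relation pairs to zero against each diassociative relation, so that $R_{\Dend} \subseteq R_{\DiAss}^\perp$; combined with the dimension count this forces $R_{\Dend} = R_{\DiAss}^\perp$, which is exactly $\Dend = \DiAss^!$, and symmetrizing gives $\SDend = \SDiAss^!$. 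I expect the main obstacle to be bookkeeping: keeping the sign on the $\circ_2$ summand consistent and correctly matching the dual basis $\{ \dashv^*, \vdash^* \}$ to $\{ \prec, \succ \}$, since a single wrong sign or a mismatched basis identification would break the orthogonality even though the dimensions still add to eight. Alternatively, one may simply cite the explicit computation in \cite{LR} and \cite{Loday2001}.
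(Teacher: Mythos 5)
Your proposal is correct. The paper gives no proof of this lemma at all---it is stated with citations to \cite{LR} and \cite{Loday2001}---so there is nothing to compare directly; but your duality argument (relations as vectors in the eight-dimensional arity-three space, sign flip on the right-associated summand, orthogonality plus the dimension count $5+3=8$) is precisely the method the paper itself uses to prove the analogous statement for $\TriAss$ and $\TriDend$ in Lemma~\ref{TATDduality}, where the signed pairing is implemented by negating the second block of columns and computing the nullspace, and your associativity expansion is the standard routine calculation.
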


\begin{definition}
\label{leibnizdef}
\cite{Bloh}, \cite{Loday1993}, \cite{Loday1995b}.
A vector space with a bilinear operation $\{a,b\}$ (resp.~$a \prec b$) is a (left) \textbf{Leibniz}
(resp.~\textbf{Zinbiel}) algebra if it satisfies the relation
  \[
  \{ a, \{ b, c \} \} \equiv \{ \{ a, b \}, c \} + \{ b, \{ a, c \} \}
  \;\; \big( \text{resp.} \;\;
  ( a \prec b ) \prec c \equiv a \prec ( b \prec c ) + a \prec ( c \prec b ) \big).
  \]
\end{definition}

\begin{lemma}
\cite{Loday1995a,Loday1995b,Loday2001}.
The operation $a \dashv b - b \vdash a$ satisfies the left Leibniz identity in every diassociative algebra.
In every Zinbiel algebra the antidicommutator $a \prec b + b \prec a$ is commutative and associative.
Every Zinbiel algebra becomes a dendriform algebra if we define $a \succ b = b \prec a$; conversely, every
dendriform algebra which is commutative ($a \prec b \equiv b \succ a$) is a Zinbiel algebra.
The operads $\Leib$ and $\Zinb$ are Koszul duals.
\end{lemma}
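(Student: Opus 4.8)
The plan is to establish the four assertions in turn: the first three by direct expansion from the defining relations of Definitions \ref{diassdendrdef} and \ref{leibnizdef}, and the last from Koszul duality. First I would verify the left Leibniz identity for $\{x,y\} = x \dashv y - y \vdash x$. Expanding the single bracket on the left of $\{a,\{b,c\}\} \equiv \{\{a,b\},c\} + \{b,\{a,c\}\}$ gives four signed diassociative monomials of arity $3$, and expanding the two brackets on the right gives eight more. I would reduce every such monomial to the normal form in which the unique argument never "barred over" is distinguished, writing it as $u_1 \vdash \cdots \vdash u \dashv \cdots \dashv v_\ell$: repeated use of left, right, and inner associativity together with the two bar identities makes this rewriting deterministic, and for arity $3$ a normalized monomial is determined by which of the three arguments is distinguished. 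Comparing normal forms then shows the two sides agree. This step is purely mechanical but bookkeeping-heavy.

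Next, for the antidicommutator $a \diamond b = a \prec b + b \prec a$, commutativity is immediate from the symmetry of the defining formula. For associativity I would expand $(a\diamond b)\diamond c$ and $a\diamond(b\diamond c)$; each side is a sum of four $\prec$-monomials, and the two monomials of shape $(\,\cdot \prec \cdot)\prec\cdot$ on each side must be rewritten by the Zinbiel relation $(x\prec y)\prec z \equiv x\prec(y\prec z)+x\prec(z\prec y)$. After this substitution both sides reduce to the same symmetric sum over the six monomials $x\prec(y\prec z)$ with $\{x,y,z\}=\{a,b,c\}$, so the two agree.

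For the equivalence with dendriform algebras I would set $a\succ b := b\prec a$ and check the three relations of Definition \ref{diassdendrdef}. Inner associativity $(a\succ b)\prec c \equiv a\succ(b\prec c)$ becomes $(b\prec a)\prec c \equiv (b\prec c)\prec a$, and applying the Zinbiel relation to each side gives the common value $b\prec(a\prec c)+b\prec(c\prec a)$. Left-right symmetrization, after substituting $b\succ c = c\prec b$, is literally the Zinbiel relation. Right-left symmetrization $a\succ(b\succ c)\equiv(a\succ b)\succ c+(a\prec b)\succ c$ becomes $(c\prec b)\prec a \equiv c\prec(b\prec a)+c\prec(a\prec b)$, again an instance of the Zinbiel relation. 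Conversely, in a dendriform algebra with $a\prec b \equiv b\succ a$ one has $a\succ b \equiv b\prec a$, whereupon left-right symmetrization reads exactly as the Zinbiel relation, so the algebra is Zinbiel.

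Finally, for $\Leib^! \cong \Zinb$ I would invoke Loday's linear-algebra description of the Koszul dual of an operad generated by binary operations with no symmetry (Theorem 8.5 and Proposition B.3 of \cite{Loday2001}). Both operads are symmetrizations of quotients of the free operad on one binary operation with no symmetry by a single quadratic relation, so their relation spaces sit inside the common arity-$3$ component $\SO_1(3)$. The computation amounts to writing the Leibniz relation in the fixed monomial basis, forming its $S_3$-span, and showing that its orthogonal complement under the standard sign-twisted pairing is precisely the $S_3$-span of the Zinbiel relation. The main obstacle is pinning down the pairing and sign conventions exactly for a single operation with no symmetry, since any slip there interchanges $\prec$ and $\succ$ or flips a sign; once the pairing is fixed the orthogonality is a short finite-dimensional check. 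Alternatively one can argue abstractly from $\ComAss^! = \Lie$ together with the compatibility of the "di-" construction (the Manin product with $\Perm$) with Koszul duality.
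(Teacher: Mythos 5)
The paper itself offers no proof of this lemma --- it is a citation to Loday's work --- so your verification is necessarily your own route; and parts (2), (3), (4) of it are correct as described. The Zinbiel computations do close up exactly as you say (both $(a\diamond b)\diamond c$ and $a\diamond(b\diamond c)$ reduce to the sum of all six monomials $x\prec(y\prec z)$), the three dendriform relations do become instances of the Zinbiel relation under $a\succ b:=b\prec a$, and for the duality either Loday's annihilator computation or your alternative via Manin products, $\Leib\cong\Perm\circ\Lie=(\PreLie\bullet\ComAss)^!\cong\Zinb^!$, is sound --- the latter is in fact the route the paper records in \S\ref{sectionManin}.

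The genuine gap is in part (1), and it sits exactly at the step you declared ``purely mechanical'' and did not carry out: with the paper's stated conventions, comparing normal forms does \emph{not} show the two sides agree. Write each arity-3 diassociative normal form as a word with your distinguished (bar-free) argument marked by a hat, so $\hat{a}bc$ denotes $(a\dashv b)\dashv c$, $b\hat{a}c$ denotes $(b\vdash a)\dashv c$, and $bc\hat{a}$ denotes $(b\vdash c)\vdash a$. For $\{x,y\}=x\dashv y-y\vdash x$ the reductions give
\begin{align*}
\{a,\{b,c\}\} &= \hat{a}bc-\hat{a}cb-bc\hat{a}+cb\hat{a},\\
\{\{a,b\},c\} &= \hat{a}bc-b\hat{a}c-c\hat{a}b+cb\hat{a},\\
\{b,\{a,c\}\} &= \hat{b}ac-\hat{b}ca-ac\hat{b}+ca\hat{b},
\end{align*}
so the left Leibniz identity of Definition~\ref{leibnizdef} fails: the difference of the two sides is the nonzero element
$-\hat{a}cb+b\hat{a}c+c\hat{a}b-bc\hat{a}-\hat{b}ac+\hat{b}ca+ac\hat{b}-ca\hat{b}$.
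What this bracket does satisfy is the \emph{right} Leibniz identity $\{\{a,b\},c\}\equiv\{\{a,c\},b\}+\{a,\{b,c\}\}$, as the same table shows:
$\{\{a,c\},b\}+\{a,\{b,c\}\}
=\hat{a}cb-c\hat{a}b-b\hat{a}c+bc\hat{a}+\hat{a}bc-\hat{a}cb-bc\hat{a}+cb\hat{a}
=\hat{a}bc-b\hat{a}c-c\hat{a}b+cb\hat{a}$.
Equivalently, the left identity of Definition~\ref{leibnizdef} holds for the opposite bracket $a\vdash b-b\dashv a$. So there is a left/right convention clash inside the lemma as stated (Loday's dicommutator yields right Leibniz algebras, while Definition~\ref{leibnizdef} is left Leibniz), and a correct proof must detect and resolve it --- by proving the right identity, or by switching the bracket --- rather than assert that the normal forms match. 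Your write-up asserts the match, which is precisely the step that breaks.
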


\begin{remark}
The operation $\{a,b\} = a \dashv b - b \vdash a$ defines a functor between the categories of diassociative and
Leibniz algebras; the left adjoint sends a Leibniz algebra to its universal enveloping diassociative algebra;
see \cite{AG}, \cite{BCL}, \cite{Goichot}, \cite{Loday2001}.
\end{remark}

\begin{problem}
Determine the polynomial identities satisfied by
(1)
the dicommutator and antidicommutator in Leibniz algebras;
(2)
the dicommutator in Zinbiel algebras.
\end{problem}

\begin{definition}
\label{preliedef}
\cite{Chapoton1}, \cite{CL}, \cite{Gerstenhaber}, \cite{Vinberg}.
A vector space with a bilinear operation $\{a,b\}$ is a (left) \textbf{pre-Lie} (or left-symmetric) algebra
if it satisfies the relation $( a, b, c ) \equiv ( b, a, c )$ where the associator is
$( a, b, c ) = \{ \{ a, b \}, c \} - \{ a, \{ b, c \} \}$.
A (right) \textbf{perm algebra} is associative and \textbf{right-commutative}: $a b c \equiv a c b$.
\end{definition}

\begin{lemma}
\cite{BL}, \cite{Chapoton2}, \cite{Loday1995a}, \cite{Loday2001}.
In every dendriform algebra the operation $a \prec b - b \succ a$ satisfies the pre-Lie identity.
The commutator in a pre-Lie algebra satisfies (anticommutativity and) the Jacobi identity.
Every identity for the anticommutator in a pre-Lie algebra is a consequence of commutativity.
The operads $\PreLie$ and $\Perm$ are Koszul duals.
\end{lemma}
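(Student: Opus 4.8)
The lemma bundles four assertions, which I would treat in turn; three reduce to finite computations in arity $3$, while the third requires an argument uniform in the arity.

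\emph{The pre-Lie identity for $a \prec b - b \succ a$.} I would substitute $\{a,b\} = a \prec b - b \succ a$ into the associator $(a,b,c) = \{\{a,b\},c\} - \{a,\{b,c\}\}$, producing a signed sum of eight arity-$3$ dendriform monomials. Reading the three relations of Definition~\ref{diassdendrdef} as left-to-right rewriting rules — inner associativity to eliminate the pattern $(x \succ y)\prec z$, left-right symmetrization to eliminate $(x\prec y)\prec z$, and right-left symmetrization to eliminate $x\succ(y\succ z)$ — every monomial reduces to the five-element normal-form basis of $\Dend(3)$. After cancellation the associator becomes a sum of six normal-form monomials, and one reads off directly that it is invariant under the transposition of a pair of its three arguments; this is the pre-Lie identity (with the orientation $a \prec b - b \succ a$ one lands on the right-symmetric form, which matches Definition~\ref{preliedef} after the standard $\prec \leftrightarrow \succ$ involution of the dendriform operad). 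The computation is finite and mechanical; there is no real obstacle.

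\emph{The commutator is a Lie bracket.} Antisymmetry of $[a,b] = \{a,b\} - \{b,a\}$ is immediate. For the Jacobi identity I would expand the Jacobiator into its twelve left- and right-nested terms and regroup them as
\[
[[a,b],c]+[[b,c],a]+[[c,a],b] = \bigl[(a,b,c)-(b,a,c)\bigr] + \bigl[(b,c,a)-(c,b,a)\bigr] + \bigl[(c,a,b)-(a,c,b)\bigr],
\]
a cyclic sum of associator differences. Each bracket is of the form $(x,y,z)-(y,x,z)$ and hence vanishes by the defining identity $(a,b,c)\equiv(b,a,c)$ of Definition~\ref{preliedef}. The only point to verify is that the twelve terms pair up exactly as displayed, which they do.

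\emph{Koszul duality of $\PreLie$ and $\Perm$.} Here I would follow the linear-algebra recipe. Both operads are generated by a single binary operation with no symmetry, so their quadratic relations live in the arity-$3$ component of the free symmetric operad on one such operation, an $S_3$-module of dimension $12$. I would write down the relation space $R$ of $\Perm$ (associativity together with right-commutativity $abc\equiv acb$) and the pre-Lie relation space (left-symmetry of the associator), and verify that they are orthogonal complements under the sign-twisted invariant pairing on this module. Matching dimensions and checking orthogonality identifies the complement $R^{\perp}$ with the pre-Lie relations, giving $\Perm^{!}=\PreLie$ and hence $\PreLie^{!}=\Perm$; again a finite computation.

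\emph{The anticommutator carries only commutativity.} This is the substantive assertion, since it concerns every arity. The anticommutator $a \circ b = \{a,b\}+\{b,a\}$ is visibly commutative, and the claim is equivalent to injectivity, in each arity $n$, of the operad morphism from the free symmetric operad on one commutative operation into $\PreLie$ sending $a \circ b$ to $\{a,b\}+\{b,a\}$. I would argue in the rooted-tree model of the free pre-Lie algebra of Chapoton and Livernet, where $\PreLie(n)$ has a basis of rooted trees on $n$ labeled vertices and the product is a sum over graftings; fixing a monomial order on trees, I would show that distinct commutative monomials have distinct leading trees, so their images are linearly independent. The hard part is precisely this last step: controlling leading terms and establishing independence uniformly in $n$ rather than arity-by-arity. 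The dimension counts ($\dim\PreLie(n)=n^{n-1}$ against $(2n{-}3)!!$ commutative monomials) are consistent with injectivity but do not establish it, so the crux is a clean description of the grafting expansion of $\{a,b\}+\{b,a\}$ that survives to all arities.
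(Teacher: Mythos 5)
This lemma is background material: the paper itself gives no proof, but quotes the four assertions from the literature, and the citations line up with the assertions --- in particular \cite{BL} (``the symmetric operation in a free pre-Lie algebra is magmatic'') is attached precisely to cover the third one. Your treatment of the first, second, and fourth assertions is sound and is the same kind of finite arity-3 linear algebra the paper carries out for its analogous Lemmas \ref{TATDduality} and \ref{lietriops}: the rewriting computation does produce a six-term reduced associator with a transposition symmetry; your regrouping of the twelve Jacobiator terms into the three differences $(x,y,z)-(y,x,z)$ is exact; and the orthogonal-complement argument, with the dimension count $9+3=12$ in the $12$-dimensional arity-3 component, is the standard recipe for $\Perm^{!}=\PreLie$. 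One small correction to your parenthetical on orientation: the associator of $\{a,b\} = a \prec b - b \succ a$ is symmetric in its \emph{last} two arguments, so this bracket is right pre-Lie, whereas Definition \ref{preliedef} is left pre-Lie; the discrepancy is repaired by passing to the opposite bracket $(a,b)\mapsto\{b,a\}$, not by the $\prec\,\leftrightarrow\,\succ$ involution of $\SDend$, since that involution (which also reverses arguments) sends the bracket to $-\{a,b\}$ and negation does not change the symmetry type of the associator.

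The genuine gap is the third assertion, and you have identified it yourself without closing it. Reformulating the claim as injectivity, in every arity, of the operad morphism from the free symmetric operad on one commutative binary operation into $\PreLie$, and pointing to the Chapoton--Livernet rooted-tree model, is the correct setup; but the step you defer --- a monomial order on labeled rooted trees under which the expansions of distinct commutative monomials in $\{a,b\}+\{b,a\}$ have distinct leading trees --- is not a technicality, it is the entire theorem of Bergeron and Loday \cite{BL}, which occupies a full paper. As you note, the dimension inequality $(2n-3)!! \le n^{n-1}$ is consistent with injectivity but proves nothing, and no arity-by-arity computation can settle a statement about all arities. So as it stands your proposal establishes three of the four assertions (modulo the orientation fix above) and, for the third, reproduces the reduction to a known hard theorem without proving it; to be complete it must either carry out the leading-tree analysis in detail or, like the paper, explicitly invoke \cite{BL}.
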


\begin{definition}
\label{jordialgdef}
\cite{Bremner}, \cite{BM}, \cite{HNB}, \cite{Kolesnikov1}, \cite{VF}.
A (right) \textbf{Jordan dialgebra} is a vector space with a bilinear product $a \bullet b$
satisfying the following three relations (one of arity 3 and two of arity 4)
where $(a,b,c) = ( a \bullet b ) \bullet c - a \bullet ( b \bullet c )$ is the associator:
  \begin{center}
  \begin{tabular}{l@{\qquad}l}
  right commutativity & $a \bullet ( b \bullet c ) \equiv a \bullet ( c \bullet b )$
  \\
  right quasi-Jordan identity & $( b \bullet a ) \bullet ( a \bullet a ) \equiv ( b \bullet ( a \bullet a ) ) \bullet a$
  \\
  right associator-derivation identity & $( b, a \bullet a, c ) \equiv 2 ( b, a, c ) \bullet a$
  \end{tabular}
  \end{center}
A (right) \textbf{pre-Jordan algebra} is a vector space with a bilinear product $a \bullet b$ satisfying
the following two relations of arity 4 where we write for short $a \cdot b = a \bullet b + b \bullet a$:
  \begin{align*}
  &
  (a \cdot b) \bullet (c \bullet d) + (b \cdot c) \bullet (a \bullet d) + (c \cdot a) \bullet (b \bullet d)
  \\[-1mm]
  &\qquad
  \equiv
  c \bullet [(a \cdot b) \bullet d] + a \bullet [(b \cdot c) \bullet d] + b \bullet [(c \cdot a) \bullet d],
  \\
  &
  a \bullet [b \bullet (c \bullet d)] + c \bullet [b \bullet (a \bullet d)] + [(a \cdot c) \cdot b] \bullet d
  \\[-1mm]
  &\qquad
  \equiv
  c \bullet [(a \cdot b) \bullet d] + a \bullet [(b \cdot c) \bullet d] + b \bullet [(c \cdot a) \bullet d].
  \end{align*}
\end{definition}

\begin{lemma}
\cite{Bremner}, \cite{BM}, \cite{HNB}.
In every diassociative algebra the operation $a \vdash b + b \dashv a$ satisfies the identities
defining Jordan dialgebras.
In every dendriform algebra the operation $a \prec b + b \succ a$ satisfies the identities
defining pre-Jordan algebras.
\end{lemma}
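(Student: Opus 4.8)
The plan is to verify each defining relation directly, by substituting the stated operation and reducing with the diassociative (resp.\ dendriform) relations of Definition \ref{diassdendrdef}. Consider first the diassociative case, writing $a \bullet b = a \vdash b + b \dashv a$. For the right commutativity relation $a \bullet ( b \bullet c ) \equiv a \bullet ( c \bullet b )$, I would first expand the inner products $b \bullet c$ and $c \bullet b$, then expand the outer product by bilinearity, obtaining on each side a sum of four fully-associated diassociative monomials of arity $3$. I would then reduce every such monomial to the normal form for the free diassociative algebra, in which a monomial on arguments in a fixed order is determined by the position of the single distinguished ``bar'' element toward which all operations point: the left and right bar identities, together with left, right, and inner associativity, guarantee that this pointed form is well defined, since $\vdash$ places the bar on its right operand and $\dashv$ on its left. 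Matching the resulting normal-form expressions termwise then yields the identity.

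The two arity-$4$ relations for Jordan dialgebras (the right quasi-Jordan identity and the right associator-derivation identity, with associator $(a,b,c) = ( a \bullet b ) \bullet c - a \bullet ( b \bullet c )$) are treated the same way, but here each outer $\bullet$ nearly doubles the number of diassociative monomials, so after full expansion each side becomes a sum of a dozen or more terms. In this regime the normal-form reduction does the real work, collapsing the many associated monomials onto the few pointed basis elements, after which the two sides can be compared coefficient by coefficient. I expect the bookkeeping of these expansions, rather than any conceptual difficulty, to be the principal labor.

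For the dendriform/pre-Jordan case, writing $a \bullet b = a \prec b + b \succ a$ and using the shorthand $a \cdot b = a \bullet b + b \bullet a$ (which, by the preceding lemma, equals the anticommutator of the associative operation $a \prec b + a \succ b$, hence is a commutative Jordan product), the same three-step strategy applies: substitute, expand by multilinearity, and rewrite using inner associativity and the left-right and right-left symmetrization relations. The difference, and the main obstacle, is that two of the dendriform relations are not monomial equalities but genuine linear relations, each replacing a single monomial by a sum of two, so reduction cannot proceed by naive rewriting of one monomial at a time. Instead I would fix an ordered monomial basis of the free dendriform algebra in the relevant arity and express every term of each side in that basis, which requires careful and systematic bookkeeping; once both of the arity-$4$ pre-Jordan identities are written in this basis, equality of coefficients completes the proof.
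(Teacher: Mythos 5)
Note first that the paper contains no proof of this lemma: it is stated with citations to \cite{Bremner}, \cite{BM}, \cite{HNB}, so your proposal must be judged against those sources rather than against an argument in the text. Your overall strategy is the correct and standard one. For the diassociative half, the pointed normal form you describe is exactly Loday's basis of the free diassociative algebra \cite{Loday2001}: every monomial reduces to $x_{i_1} \vdash \cdots \vdash \hat{x}_{i_k} \dashv \cdots \dashv x_{i_n}$, where $\dashv$ keeps the bar of its left factor and $\vdash$ keeps the bar of its right factor, and these pointed monomials are linearly independent, so both positive and negative conclusions can be drawn coefficientwise. For the dendriform half you correctly identify the obstacle (two of the relations are not monomial) and the remedy (work in a fixed monomial basis of the free dendriform algebra, i.e.\ the normal forms of a Gr\"obner--Shirshov basis, parallel to Lemma \ref{TDGSlemma} and \cite{Madariaga}); your remark that $a \cdot b = a \bullet b + b \bullet a$ is the anticommutator of the associative product $a \prec b + a \succ b$ is also correct.

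There is, however, one step that fails exactly as you wrote it: the ``termwise matching'' for right commutativity. Take the operation literally as in the statement, $a \bullet b = a \vdash b + b \dashv a$. Both of its terms carry the bar on the \emph{second} argument: in pointed-word notation, $a \vdash b = a\hat{b}$ and $b \dashv a = \hat{b}a$. Consequently every term of $a \bullet ( b \bullet c )$ has its bar on $c$,
\[
a \bullet ( b \bullet c ) \;=\; ab\hat{c} + a\hat{c}b + b\hat{c}a + \hat{c}ba ,
\]
whereas every term of $a \bullet ( c \bullet b )$ has its bar on $b$. By independence of the pointed monomials, the right commutativity $a \bullet (b \bullet c) \equiv a \bullet (c \bullet b)$ of Definition \ref{jordialgdef} does \emph{not} hold for this operation; what holds is the mirror-image identity $(a \bullet b) \bullet c \equiv (b \bullet a) \bullet c$, and likewise for the two arity-4 relations. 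The repair is a handedness correction, not a new idea: verify the right-handed identities of Definition \ref{jordialgdef} for $a \bullet b = a \dashv b + b \vdash a$ --- the convention used in Table \ref{varietiesofnonassociativealgebras} and in Section \ref{jordantrialgebrasection}, for which every bar sits on the \emph{first} argument and the termwise matching does succeed --- or else keep $a \vdash b + b \dashv a$ and prove the left-handed (opposite-algebra) identities, stating explicitly that passing to the opposite algebra interchanges the two conventions. The same caution applies to your dendriform computation: the pre-Jordan identities of Definition \ref{jordialgdef} are not invariant under passing to the opposite algebra, so before comparing coefficients you must fix whether the pre-Jordan product is $a \prec b + b \succ a$ or its opposite $a \succ b + b \prec a$, consistently with the convention of \cite{HNB}; otherwise the linear algebra will refute, rather than prove, the identities.
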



\subsection{Manin black and white products}
\label{sectionManin}

We can interpret the vertical arrows in Table \ref{generalizingoperads} using Manin white and black products
of operads; for details see \cite{Bandiera}, \cite{GinzburgKapranov}, \cite{GK2}, \cite{LV}, \cite{Vallette2}.

Starting with $\SAssoc$ in the center of Table \ref{generalizingoperads}, we compute the white product
$\SDiAss \cong \Perm \,\circ\, \SAssoc$ to move up in column 2.
Similarly, we obtain the Leibniz operad $\Leib \cong \Perm \,\circ\, \Lie$ from the operad $\Lie$,
and the di-Jordan operad $\DiJor \cong \Perm \,\circ\, \Jor$ from the Jordan operad $\Jor$.
Taking the white product with $\Perm$ is sometimes called duplication \cite{GK1}, \cite{PBGN};
this process has also been called the KP algorithm \cite{BFSO}.
Starting again from $\SAssoc$, we move down by computing the black product $\SDend \cong \PreLie \bullet \SAssoc$.
Similarly, we obtain $\PreLie \cong \PreLie \bullet \Lie$ and $\PreJor \cong \PreLie \,\circ\, \Jor$.
Taking the black product with $\PreLie$ is sometimes called computing the disuccessor \cite{BBGN}.

For any finitely generated binary quadratic operads $P$ and $Q$, Koszul duality interchanges Manin
black and white products: we have
$( P \bullet Q )^! \cong P^! \,\circ\, Q^!$.
Therefore, $\DiAss$ and $\Dend$ are Koszul dual:
\[
\SDiAss^! \cong ( \SAssoc \,\circ\, \Perm )^! \cong \SAssoc^! \bullet \Perm^! \cong \SAssoc \bullet \PreLie \cong \SDend.
\]
For the Lie column, the operads $\Leib$ and $\PreLie$ are not a dual pair, but we have
\begin{align*}
&
\Leib \cong \Perm \,\circ\, \Lie = \PreLie^! \,\circ\, \ComAss^! \cong ( \PreLie \bullet \ComAss )^! = \Zinb^!,
\\[-2mm]
&
\PreLie \cong \PreLie \bullet \Lie \cong \Perm^! \bullet \ComAss^! \cong ( \Perm \,\circ\, \ComAss )^! \cong \Perm^!.
\end{align*}
For the Jordan column, \emph{if the Jordan operad had a Koszul dual}, then we would obtain:
\begin{align*}
&
\DiJor \cong \Perm \,\circ\, \Jor = \PreLie^! \,\circ\, (\Jor^!)^! \cong ( \PreLie \bullet \Jor^! )^!,
\\[-2mm]
&
\PreJor \cong \PreLie \bullet \Jor \cong \Perm^! \bullet (\Jor^!)^! \cong ( \Perm \,\circ\, \Jor^! )^!.
\end{align*}
See \S\ref{nonquadratickoszulduality} for one way in which a Koszul dual for the Jordan operad could be defined.

Similar considerations apply to the top and bottom rows of Table \ref{generalizingoperads}.
Going from the middle to the top corresponds to taking the triplicator of the operad \cite{GK1}, \cite{PBGN};
this is the white product with the operad $\ComTriAss$ (Definition \ref{triassdef}):
\[
\TriLie \cong \ComTriAss \,\circ\, \Lie, \;
\STriAss \cong \ComTriAss \,\circ\, \SAssoc, \;
\TriJor \cong \ComTriAss \,\circ\, \Jor.
\]
Going from the middle to the bottom represents taking the trisuccessor of the operad \cite{BBGN}:
the black product with the operad $\PostLie$ (Definition \ref{comtriassdef}).
The operads $\STriAss$ and $\STriDend$ at top and bottom of the associative column are Koszul dual.
For the Lie column,
\begin{align*}
&
\TriLie \cong
\ComTriAss \,\circ\, \Lie =
\PostLie^! \,\circ\, \ComAss^! \cong
( \PostLie \bullet \ComAss )^! \cong
\ComTriDend^!,
\\[-2mm]
&
\PostLie \cong
\PostLie \bullet \Lie \cong
\ComTriAss^! \bullet \ComAss^! \cong
( \ComTriAss \,\circ\, \ComAss )^! \cong
\ComTriAss^!.
\end{align*}


\section{Triassociative and tridendriform algebras}
\label{trisection}

\begin{definition}
\label{triassdef}
\cite{LRtrialgebras}.
A \textbf{triassociative algebra} is a vector space with bilinear operations $\dashv$, $\perp$, $\vdash$
satisfying these relations where ${\ast} \in \{ \dashv, \perp, \vdash \}$:
\begin{itemize}
\item
left, middle, and right associativity: \,
$( a \ast b ) \ast c \equiv a \ast ( b \ast c )$,
\item
bar identities: \,
$a \dashv ( b \dashv c ) \equiv a \dashv ( b \ast c )$ and
$( a \vdash b ) \vdash c \equiv ( a \ast b ) \vdash c$,
\item
inner associativity: \,
if $( {\ast_1}, {\ast_2} ) \in \{ ( \vdash, \dashv ), ( \vdash, \perp ), ( \perp, \dashv ) \}$ then
$( a {\,\ast_1\,} b ) {\,\ast_2\,} c \equiv a {\,\ast_1\,} ( b {\,\ast_2\,} c )$,
\item
Loday-Ronco relation: \,
$( a \dashv b ) \perp c \equiv a \perp ( b \vdash c )$.
\end{itemize}
A \textbf{tridendriform algebra} is a vector space with bilinear operations $\prec$, $\curlywedge$, $\succ$
satisfying:
\begin{itemize}
\item
middle associativity: \,
$( a \curlywedge b ) \curlywedge c \equiv a \curlywedge ( b \curlywedge c )$,
\item
left to right and right to left expansions: \,
\item[]
$( a \prec b ) \prec c \equiv a \prec ( b \prec c ) + a \prec ( b \curlywedge c ) + a \prec ( b \succ c )$,
\item[]
$a \succ ( b \succ c ) \equiv ( a \succ b ) \succ c + ( a \curlywedge b ) \succ c + ( a \prec b ) \succ c$,
\item
inner associativity: \,
if $( {\ast_1}, {\ast_2} ) \in \{ ( \succ, \prec ), ( \succ, \curlywedge ), ( \curlywedge, \prec ) \}$ then
$( a {\,\ast_1\,} b ) {\,\ast_2\,} c \equiv a {\,\ast_1\,} ( b {\,\ast_2\,} c )$,
\item
Loday-Ronco relation: \,
$( a \prec b ) \curlywedge c \equiv a \curlywedge ( b \succ c )$.
\end{itemize}
\end{definition}

\begin{definition}
We write $\myfont{BBB}$ for the free nonsymmetric operad generated by three binary operations;
the ordered set $\Omega$ of operations will be either
$\{ \dashv, \perp, \vdash \}$ or $\{ \prec, \curlywedge, \succ \}$.
\end{definition}

\begin{lemma} \label{TATDduality}
\cite{LRtrialgebras}.
The operation $a \cdot b = a \prec b + a \curlywedge b + a \succ b$ is associative in every tridendriform algebra.
The operads $\TriAss$ and $\TriDend$ are a Koszul dual pair.
\end{lemma}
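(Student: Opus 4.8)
The statement has two independent parts. For the associativity of $a \cdot b = a \prec b + a \curlywedge b + a \succ b$, the plan is a direct expansion. Bilinearity turns $(a \cdot b) \cdot c$ into the nine terms $(a \ast_1 b) \ast_2 c$ with $\ast_1, \ast_2 \in \{ \prec, \curlywedge, \succ \}$, and $a \cdot (b \cdot c)$ into the nine terms $a \ast_1 (b \ast_2 c)$. I would then reconcile the two sides using the tridendriform relations of Definition \ref{triassdef}, each used exactly once: the left-to-right expansion rewrites $(a \prec b) \prec c$ as the sum of the three right-nested $\prec$-leading terms; the right-to-left expansion collapses the three left-nested $\succ$-trailing terms into $a \succ (b \succ c)$; middle associativity handles the pure-$\curlywedge$ monomial; the three inner-associativity relations handle the mixed pairs $(\succ,\prec)$, $(\succ,\curlywedge)$, $(\curlywedge,\prec)$; and the Loday--Ronco relation identifies $(a \prec b) \curlywedge c$ with $a \curlywedge (b \succ c)$. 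After these substitutions the two nine-term sums are identical. This part is routine.

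For the Koszul duality I would apply Loday's recipe for the dual of a nonsymmetric binary quadratic operad (Theorem 8.5 of \cite{Loday2001}), which the excerpt already advertises as the intended method. The arena is $\BBB(3)$, the arity-$3$ component of the free nonsymmetric operad on three operations; it has dimension $2 \cdot 3^2 = 18$, with basis the left-nested monomials $(x \ast_1 x) \ast_2 x$ and the right-nested monomials $x \ast_1 (x \ast_2 x)$. On this space I place the Koszul pairing fixed by the duality of generators $\dashv \leftrightarrow \prec$, $\perp \leftrightarrow \curlywedge$, $\vdash \leftrightarrow \succ$: two left-nested monomials (resp.\ two right-nested monomials) pair as the product of the two generator pairings, a left-nested monomial pairs with any right-nested monomial to zero, and the right-nested pairing carries a global minus sign. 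That sign is the structural heart of operadic Koszul duality; it is exactly what makes the associativity relation self-orthogonal.

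The core step is then to write the seven tridendriform relations as explicit vectors in this $18$-dimensional space and compute the orthogonal complement $R^{\perp}$. By the recipe, $\TriDend^{!}$ is the operad on the same generators with relation space $R^{\perp}$; since $\dim R = 7$ we have $\dim R^{\perp} = 11$, and the count $7 + 11 = 18$ confirms both that the relations are linearly independent and that $R^{\perp}$ contains nothing beyond what we produce. The final verification is that, under the generator identification above, $R^{\perp}$ is spanned precisely by the eleven triassociative relations of Definition \ref{triassdef} --- the three associativities, the four bar identities, the three inner associativities, and the Loday--Ronco relation --- which exhibits $\TriAss$ and $\TriDend$ as a dual pair. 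I expect the main obstacle to be the sign bookkeeping in this orthogonality computation: the minus sign on right-nested monomials must be tracked consistently, and one must check that the expansion-type tridendriform relations, which are genuine linear combinations rather than monomial equalities, land on the monomial equalities of the triassociative side. Matching the two families exactly --- not merely up to sign or up to a relabeling of the three operations --- is where the care lies; the rest is linear algebra in a fixed basis.
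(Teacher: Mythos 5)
Your proposal is correct and takes essentially the same approach as the paper: the paper also proves the duality via Loday's linear-algebra recipe in the $18$-dimensional space $\BBB(3)$, implementing the sign on right-nested monomials by negating that block of the relation matrix and then computing the nullspace (i.e., the orthogonal complement under the signed Koszul pairing) --- the only immaterial difference being that the paper starts from the $11$ triassociative relations and recovers the $7$ tridendriform ones, whereas you run the involution in the opposite direction. Your nine-term expansion proving associativity of $a \cdot b = a \prec b + a \curlywedge b + a \succ b$ is also correct (each tridendriform relation is used exactly once); the paper in fact omits this part and proves only the duality claim.
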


\begin{proof}
(of the second claim).
Both operads are quadratic, binary (and nonsymmetric), so the claim can be verified following
\cite{Loday2001}.
An ordered basis for $\myfont{BBB}(3)$ consists of the following 18 monomials where the pairs
$( \ast_1, \ast_2 )$ are in lex order:
\begin{equation}
\label{18monomials}
( a \ast_1 b ) \ast_2 c, \qquad a \ast_1 ( b \ast_2 c ), \qquad
( \ast_1, \ast_2 ) \in \Omega^2, \qquad \Omega = \{ \dashv, \perp, \vdash \}.
\end{equation}
Let $R$ be the matrix whose rows are the coefficient vectors of the triassociative relations;
thus $\mathrm{rowspace}(R) \subseteq \BBB(3)$.
We write $m_1 \equiv m_2$ as $m_1 - m_2 \equiv 0$ and dot for 0:
\begin{center}
\begin{tabular}{r@{\;}l}
$R =$
&
\scriptsize
$\left[
\begin{array}
{r@{\quad}r@{\quad}r@{\quad}r@{\quad}r@{\;\;}r@{\quad}r@{\quad}r@{\;\;}r@{\;\;}r@{\;\;}r@{\;\;}r@{\;\;}r@{\;\;}r@{\;\;}r@{\;\;}r@{\;\;}r@{\;\;}r}
1 & . & . & . & . & . & . & . & . &-1 & . & . & . & . & . & . & . & . \\
. & . & . & . & 1 & . & . & . & . & . & . & . & . &-1 & . & . & . & . \\
. & . & . & . & . & . & . & . & 1 & . & . & . & . & . & . & . & . &-1 \\
. & . & . & . & . & . & 1 & . & . & . & . & . & . & . & . &-1 & . & . \\
. & . & . & 1 & . & . & . & . & . & . & . & . &-1 & . & . & . & . & . \\
. & . & . & . & . & . & . & 1 & . & . & . & . & . & . & . & . &-1 & . \\
. & . & 1 & . & . &-1 & . & . & . & . & . & . & . & . & . & . & . & . \\
. & . & . & . & . & 1 & . & . &-1 & . & . & . & . & . & . & . & . & . \\
. & . & . & . & . & . & . & . & . & 1 &-1 & . & . & . & . & . & . & . \\
. & . & . & . & . & . & . & . & . & . & 1 &-1 & . & . & . & . & . & . \\
. & 1 & . & . & . & . & . & . & . & . & . & . & . & . &-1 & . & . & .
\end{array}
\right]$
\end{tabular}
\end{center}
We write $R = [ R_1, R_2 ]$ as two blocks of 9 columns, set $R' = [ R_1, -R_2 ]$ to negate
the entries for association type $-(--)$, and compute $\mathrm{RCF}(R')$.
From this we extract the matrix $S$ in RCF whose row space is the nullspace of $R'$:
\begin{center}
\begin{tabular}{r@{\;}l}
$S =$
&
\scriptsize
$\left[
\begin{array}
{r@{\quad}r@{\quad}r@{\quad}r@{\quad}r@{\quad}r@{\quad}r@{\quad}r@{\quad}r@{\;\;}r@{\;\;}r@{\;\;}r@{\;\;}r@{\;\;}r@{\;\;}r@{\;\;}r@{\;\;}r@{\;\;}r}
1 & . & . & . & . & . & . & . & . & -1 & -1 & -1 &  . &  . &  . &  . &  . &  . \\
. & 1 & . & . & . & . & . & . & . &  . &  . &  . &  . &  . & -1 &  . &  . &  . \\
. & . & 1 & . & . & 1 & . & . & 1 &  . &  . &  . &  . &  . &  . &  . &  . & -1 \\
. & . & . & 1 & . & . & . & . & . &  . &  . &  . & -1 &  . &  . &  . &  . &  . \\
. & . & . & . & 1 & . & . & . & . &  . &  . &  . &  . & -1 &  . &  . &  . &  . \\
. & . & . & . & . & . & 1 & . & . &  . &  . &  . &  . &  . &  . & -1 &  . &  . \\
. & . & . & . & . & . & . & 1 & . &  . &  . &  . &  . &  . &  . &  . & -1 &  .
\end{array}
\right]$
\end{tabular}
\end{center}
If we replace the operation symbols by $\{ \prec, \curlywedge, \succ \}$ then the rows of $S$ are the coefficient
vectors of the defining relations for tridendriform algebras.
\end{proof}

\begin{definition}
\label{comtridendef}
\cite{DG},
\cite{Loday2008},
\cite{Ni},
\cite{PBGN}.
A vector space with bilinear operations $\prec$, $\curlywedge$ is a \textbf{commutative tridendriform algebra}
if it satisfies these relations:
\begin{itemize}
\item
middle commutativity and associativity: \,
$a \curlywedge b \equiv b \curlywedge a$, \,
$( a \curlywedge b ) \curlywedge c \equiv a \curlywedge ( b \curlywedge c )$,
\item
middle-left associativity: \,
$( a \curlywedge b ) \prec c \equiv a \curlywedge ( b \prec c )$
\item
left to right expansion: \,
$( a \prec b ) \prec c \equiv a \prec ( b \prec c ) + a \prec ( b \curlywedge c ) + a \prec ( c \prec b )$,
\end{itemize}
A vector space $L$ with bilinear operations $[-,-]$ and $\{-,-\}$ is a (right) \textbf{Lie trialgebra} if:
\begin{itemize}
\item
$( L, [-,-] )$ is a Lie algebra,
$( L, \{-,-\} )$ is a (right) Leibniz algebra,
and
\item
the operations satisfy
$\{ a, [b,c] \} \equiv \{ a, \{b,c\} \}$ and
$\{ [a,b], c \} \equiv [ \{a,c\}, b ] + [ a, \{b,c\} ]$
\end{itemize}
\end{definition}

\begin{lemma}
\label{lietriops}
\cite{DG},
\cite{Ni},
\cite{PBGN}.
In every triassociative algebra the operations $[a,b] = a \perp b - b \perp a$ and
$\{a,b\} = a \dashv b - b \vdash a$ satisfy the identities defining (right) Lie trialgebras.
The operads $\TriLie$ for Lie trialgebras and $\ComTriDend$ for commutative tridendriform algebras
are a Koszul dual pair.
\end{lemma}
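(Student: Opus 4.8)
The statement has two parts: an operadic-morphism claim (that the bracket $[-,-]$ and the Leibniz operation $\{-,-\}$ satisfy the Lie-trialgebra axioms inside every triassociative algebra) and a Koszul-duality claim. The plan is to settle the first by direct substitution and reduction to results already recorded above, and the second by the linear-algebra method of Lemma~\ref{TATDduality}, adapted to generating operations that carry symmetry.

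For the first claim I would substitute $[a,b]=a\perp b-b\perp a$ and $\{a,b\}=a\dashv b-b\vdash a$ and verify the axioms of Definition~\ref{comtridendef} one at a time. Antisymmetry of $[-,-]$ is immediate, and the Jacobi identity follows from middle associativity of $\perp$ alone, since the commutator of any associative operation satisfies the Jacobi identity. For the Leibniz identity I would first observe that the pair $\dashv,\vdash$ already satisfies every diassociative relation of Definition~\ref{diassdendrdef}: left and right associativity are the $\ast=\dashv$ and $\ast=\vdash$ instances of triassociativity, inner associativity is the $(\vdash,\dashv)$ case, and the two diassociative bar identities are specializations of the two triassociative bar-identity families. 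Hence $\{a,b\}$ satisfies the Leibniz identity by the diassociative lemma already recorded (with the chirality matching Definition~\ref{comtridendef}). The two mixed relations are the only genuinely new computations. Expanding $\{a,[b,c]\}\equiv\{a,\{b,c\}\}$ collapses to the bar identities alone. The relation $\{[a,b],c\}\equiv[\{a,c\},b]+[a,\{b,c\}]$ unfolds into four $\perp$-monomials on the left and eight on the right, which I would reduce using the inner-associativity cases $(\perp,\dashv)$ and $(\vdash,\perp)$ together with the Loday-Ronco relation $(a\dashv b)\perp c\equiv a\perp(b\vdash c)$; after these substitutions both sides reduce to the same four monomials and cancel.

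For the duality I would follow the template of Lemma~\ref{TATDduality}. Here $\TriLie$ is generated by one antisymmetric operation $[-,-]$ and one operation $\{-,-\}$ with no symmetry, while $\ComTriDend$ is generated by one commutative operation $\curlywedge$ and one operation $\prec$ with no symmetry; the Koszul sign-twist pairs $\curlywedge$ with $[-,-]$ (commutative against anticommutative) and $\prec$ with $\{-,-\}$ (no symmetry against no symmetry), so the generating $S_2$-modules are dual. I would fix an ordered basis of the arity-$3$ part of the free symmetric operad on $\{\curlywedge,\prec\}$, with the commutativity of $\curlywedge$ imposed, write the three arity-$3$ commutative-tridendriform relations as the rows of a coefficient matrix, and then compute the annihilator of its row space under the pairing that implements the sign twist, just as $R'$ was formed from $R$ in the nonsymmetric case. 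Read through the dictionary $\curlywedge\mapsto[-,-]$, $\prec\mapsto\{-,-\}$, the resulting relations should be exactly the Jacobi, Leibniz, and two mixed identities, and checking that the dimensions are complementary (equivalently, that applying the procedure twice returns the original relations) finishes the proof. As an abstract cross-check, the same conclusion follows from the Manin-product chain of \S\ref{sectionManin}, namely $\TriLie\cong\ComTriAss\circ\Lie\cong(\PostLie\bullet\ComAss)^!\cong\ComTriDend^!$.

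The main obstacle is the symmetric version of the orthogonality step. In Lemma~\ref{TATDduality} all three operations had no symmetry, the free arity-$3$ space was the full $18$-dimensional $\BBB(3)$, and the dual relations were obtained by the bare block-negation $R'=[R_1,-R_2]$. Here the commutativity of $\curlywedge$, and symmetrically the antisymmetry of $[-,-]$, collapses part of the monomial basis and forces the duality pairing to carry the sign representation rather than a simple sign change on one association-type block. Setting up this twisted pairing correctly, and confirming that the annihilator it produces is precisely the Lie-trialgebra relation module of the correct dimension, is the delicate point; by contrast the verification of the second mixed identity in the first claim is routine, being the only place there where several triassociative relations must be chained together.
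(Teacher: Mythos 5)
Your proposal splits into two parts that compare differently with the paper. For the first claim the paper gives no proof at all (it is delegated to the cited references), so your direct verification is an addition, and it is sound: the pair $\dashv,\vdash$ does satisfy all five diassociative relations as specializations of the triassociative ones, so the Leibniz identity for $\{a,b\}$ follows from the earlier lemma; the relation $\{a,[b,c]\}\equiv\{a,\{b,c\}\}$ does collapse under the two bar-identity families alone; and the derivation relation does close up exactly as you say, via the inner-associativity cases $(\perp,\dashv)$ and $(\vdash,\perp)$ plus the Loday--Ronco relation $(a\dashv b)\perp c\equiv a\perp(b\vdash c)$, which cancels the four leftover right-hand monomials. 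One caveat you gloss over: the paper's Definition \ref{leibnizdef} and its diassociative lemma are stated for the \emph{left} Leibniz identity, while Definition \ref{comtridendef} demands a \emph{right} Leibniz algebra, so the chirality bookkeeping you defer with ``matching Definition \ref{comtridendef}'' has to be carried out explicitly rather than quoted.

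For the duality claim --- the only part the paper actually proves --- your method is the paper's method: work in the $27$-monomial basis of $\BW(3)$, encode the commutative tridendriform relations in a coefficient matrix, and take the annihilator under a sign-twisted pairing. Your guess about the twist is exactly what the paper implements: a diagonal matrix $D$ whose $(i,i)$ entry is $\mathrm{sgn}(\sigma_i)$ on the $18$ monomials of the first association type and $-\mathrm{sgn}(\sigma_i)$ on the $9$ monomials of the second, with $R'=RD$. However, one step as you literally state it would fail: you cannot take the annihilator of the row space of just the \emph{three} arity-$3$ relations. Koszul duality pairs the relation $S_3$-submodule of one operad against the annihilator of the relation $S_3$-submodule of the other, so the matrix must contain the full $S_3$-orbit of the relations --- all six permutations of each, giving the paper's $18\times 27$ matrix $R$ of rank $14$, whose annihilator has the correct dimension $13$ and whose rows are the Lie-trialgebra relations (Jacobi, the derivation relation, $\{a,[b,c]\}\equiv\{a,\{b,c\}\}$, and rows yielding Leibniz after substitution). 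A $3$-row matrix has rank $3$ and a $24$-dimensional annihilator, far larger than the relation module of $\TriLie$. Your own complementary-dimension cross-check would flag the discrepancy, but as written the computation produces the wrong module, so this is the one concrete repair your proof needs.
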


\begin{definition}
We write $\BW$ for the free symmetric operad generated by two binary operations, one commutative and
one noncommutative.
In the next proof, we denote these operations by $\curlywedge$ and $\prec$, but later we will use the
symbols $\circ$ and $\bullet$.
\end{definition}

\begin{proof}
(of the second statement in Lemma \ref{lietriops}).
This reduces to a linear algebra calculation as in the proof of Lemma \ref{TATDduality},
but now we have symmetric operads, so we must include all permutations of the arguments.
An ordered basis of $\BW(3)$ consists of 27 monomials:

\vspace{-4mm}

\begin{equation}
\label{BW3basis}
\left\{ \;
\begin{array}{l@{\;\;}l@{\;\;}l@{\;\;}l@{\;\;}l@{\;\;}l}
( a \prec b ) \prec c, &
( a \prec c ) \prec b, &
( b \prec a ) \prec c, &
( b \prec c ) \prec a, &
( c \prec a ) \prec b, &
( c \prec b ) \prec a,
\\
( a \prec b ) \curlywedge c, &
( a \prec c ) \curlywedge b, &
( b \prec a ) \curlywedge c, &
( b \prec c ) \curlywedge a, &
( c \prec a ) \curlywedge b, &
( c \prec b ) \curlywedge a,
\\
( a \curlywedge b ) \prec c, &
( a \curlywedge c ) \prec b, &
( b \curlywedge c ) \prec a, &
( a \curlywedge b ) \curlywedge c, &
( a \curlywedge c ) \curlywedge b, &
( b \curlywedge c ) \curlywedge a,
\\
a \prec ( b \prec c ), &
a \prec ( c \prec b ), &
b \prec ( a \prec c ), &
b \prec ( c \prec a ), &
c \prec ( a \prec b ), &
c \prec ( b \prec a ),
\\
a \prec ( b \curlywedge c ), &
b \prec ( a \curlywedge c ), &
c \prec ( a \curlywedge b ).
\end{array}
\right.
\end{equation}

\vspace{-2mm}

\noindent
We have already accounted for the commutativity of $\curlywedge$.
Each (arity 3) relation in Definition \ref{comtridendef} has six permutations.
Let $R$ be the matrix whose rows are the coefficient vectors of these 18 relations
with respect to the ordered basis \eqref{BW3basis}.
\begin{table}[h!]
\begin{tabular}{r@{\;}l}
$R =$
&
\scriptsize
$\left[
\begin{array}
{
r@{\;}r@{\;}r@{\;}r@{\;}r@{\;}r@{\;}r@{\;}r@{\;}r@{\;}
r@{\;}r@{\;}r@{\;\;}r@{\;\;}r@{\;\;}r@{\;}r@{\;}r@{\;}r@{\;}
r@{\;}r@{\;}r@{\;}r@{\;}r@{\;}r@{\;}r@{\;}r@{\;}r
}
. & . & . & . & . & . & . & . & . & . & . & . & . & . & . & 1 & . & -1 & . & . & . & . & . & . & . & . & . \\
. & . & . & . & . & . & . & . & . & . & . & . & . & . & . & . & 1 & -1 & . & . & . & . & . & . & . & . & . \\
. & . & . & . & . & . & . & . & . & . & . & . & . & . & . & 1 & -1 & . & . & . & . & . & . & . & . & . & . \\
. & . & . & . & . & . & . & . & . & . & . & . & . & . & . & . & -1 & 1 & . & . & . & . & . & . & . & . & . \\
. & . & . & . & . & . & . & . & . & . & . & . & . & . & . & -1 & 1 & . & . & . & . & . & . & . & . & . & . \\
. & . & . & . & . & . & . & . & . & . & . & . & . & . & . & -1 & . & 1 & . & . & . & . & . & . & . & . & . \\
. & . & . & . & . & . & . & . & . & -1 & . & . & 1 & . & . & . & . & . & . & . & . & . & . & . & . & . & . \\
. & . & . & . & . & . & . & . & . & . & . & -1 & . & 1 & . & . & . & . & . & . & . & . & . & . & . & . & . \\
. & . & . & . & . & . & . & -1 & . & . & . & . & 1 & . & . & . & . & . & . & . & . & . & . & . & . & . & . \\
. & . & . & . & . & . & . & . & . & . & -1 & . & . & . & 1 & . & . & . & . & . & . & . & . & . & . & . & . \\
. & . & . & . & . & . & -1 & . & . & . & . & . & . & 1 & . & . & . & . & . & . & . & . & . & . & . & . & . \\
. & . & . & . & . & . & . & . & -1 & . & . & . & . & . & 1 & . & . & . & . & . & . & . & . & . & . & . & . \\
1 & . & . & . & . & . & . & . & . & . & . & . & . & . & . & . & . & . & -1 & -1 & . & . & . & . & -1 & . & . \\
. & 1 & . & . & . & . & . & . & . & . & . & . & . & . & . & . & . & . & -1 & -1 & . & . & . & . & -1 & . & . \\
. & . & 1 & . & . & . & . & . & . & . & . & . & . & . & . & . & . & . & . & . & -1 & -1 & . & . & . & -1 & . \\
. & . & . & 1 & . & . & . & . & . & . & . & . & . & . & . & . & . & . & . & . & -1 & -1 & . & . & . & -1 & . \\
. & . & . & . & 1 & . & . & . & . & . & . & . & . & . & . & . & . & . & . & . & . & . & -1 & -1 & . & . & -1 \\
. & . & . & . & . & 1 & . & . & . & . & . & . & . & . & . & . & . & . & . & . & . & . & -1 & -1 & . & . & -1
\end{array}
\right]$
\end{tabular}
\end{table}

\noindent
The row space of $R$ is the $S_3$-submodule of $\BW(3)$ generated by the commutative tridendriform relations.
For $1 \le i \le 27$ let $\sigma_i \in S_3$ be the permutation of the arguments in the $i$-th monomial
\eqref{BW3basis}.
Let $D$ be the diagonal matrix whose $(i,i)$ entry is $\mathrm{sgn}(\sigma_i)$ if $1 \le i \le 18$
(association type 1) or $-\mathrm{sgn}(\sigma_i)$ if $19 \le i \le 27$ (association type 2).
(This generalizes \cite{Loday2001} to an operad with a commutative operation.)
We set $R' = RD$, compute $\mathrm{RCF}(R')$, and obtain the matrix $S$ in RCF whose row space is the
nullspace of $R'$:

\begin{table}[h!]
\begin{tabular}{r@{\;}l}
$S =$
&
\scriptsize
$\left[
\begin{array}
{
r@{\;}r@{\;}r@{\;}r@{\;}r@{\;}r@{\;}r@{\;}r@{\;}r@{\;}
r@{\;}r@{\;}r@{\;\;}r@{\;\;}r@{\;\;}r@{\;}r@{\;}r@{\;}r@{\;}
r@{\;}r@{\;}r@{\;}r@{\;}r@{\;}r@{\;}r@{\;}r@{\;}r
}
1 & -1 & . & . & . & . & . & . & . & . & . & . & . & . & . & . & . & . & . & . & . & . & . & . & -1 & . & . \\
. & . & 1 & -1 & . & . & . & . & . & . & . & . & . & . & . & . & . & . & . & . & . & . & . & . & . & -1 & . \\
. & . & . & . & 1 & -1 & . & . & . & . & . & . & . & . & . & . & . & . & . & . & . & . & . & . & . & . & -1 \\
. & . & . & . & . & . & 1 & . & . & . & . & -1 & . & -1 & . & . & . & . & . & . & . & . & . & . & . & . & . \\
. & . & . & . & . & . & . & 1 & . & -1 & . & . & -1 & . & . & . & . & . & . & . & . & . & . & . & . & . & . \\
. & . & . & . & . & . & . & . & 1 & . & -1 & . & . & . & -1 & . & . & . & . & . & . & . & . & . & . & . & . \\
. & . & . & . & . & . & . & . & . & . & . & . & . & . & . & 1 & -1 & 1 & . & . & . & . & . & . & . & . & . \\
. & . & . & . & . & . & . & . & . & . & . & . & . & . & . & . & . & . & 1 & . & . & . & . & . & -1 & . & . \\
. & . & . & . & . & . & . & . & . & . & . & . & . & . & . & . & . & . & . & 1 & . & . & . & . & 1 & . & . \\
. & . & . & . & . & . & . & . & . & . & . & . & . & . & . & . & . & . & . & . & 1 & . & . & . & . & -1 & . \\
. & . & . & . & . & . & . & . & . & . & . & . & . & . & . & . & . & . & . & . & . & 1 & . & . & . & 1 & . \\
. & . & . & . & . & . & . & . & . & . & . & . & . & . & . & . & . & . & . & . & . & . & 1 & . & . & . & -1 \\
. & . & . & . & . & . & . & . & . & . & . & . & . & . & . & . & . & . & . & . & . & . & . & 1 & . & . & 1
\end{array}
\right]$
\end{tabular}
\end{table}

\noindent
The rows of $S$ are the coefficient vectors of the relations for Lie trialgebras in Definition \ref{comtridendef}.
We replace the symbols $\curlywedge$, $\prec$ in \eqref{BW3basis} by $[-,-]$, $\{-,-\}$;
since $\curlywedge$ is commutative, its dual $[-,-]$ is anticommutative.
The last 6 rows of $S$ are permutations of $\{ a, [b,c] \} \equiv \{ a, \{b,c\} \}$.
Row 7 is the Jacobi identity for $[-,-]$.
Rows 4--6 are permutations of the statement that $\{-,-\}$ is a derivation of $[-,-]$, namely
$\{ [a,b], c \} \equiv [ \{a,c\}, b ] + [ a, \{b,c\} ]$.
Applying $\{ a, [b,c] \} \equiv \{ a, \{b,c\} \}$ to rows 1--3 gives permutations of the Leibniz identity
for $\{-,-\}$.
\end{proof}

\begin{conjecture}
The operations in Lemma \ref{lietriops} define a functor $F\colon \mathbf{TriAss} \to \mathbf{TriLie}$.
We believe that $F$ has a left adjoint $U$ which sends a Lie trialgebra $L$ to its universal enveloping
triassociative algebra $U(L)$, and that the natural map from $L$ to $U(L)$ is injective.
\end{conjecture}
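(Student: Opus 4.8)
The first claim is a restatement of Lemma~\ref{lietriops}. On objects, I would let $F$ send a triassociative algebra $A$ to the vector space $A$ equipped with $[a,b] = a \perp b - b \perp a$ and $\{a,b\} = a \dashv b - b \vdash a$; Lemma~\ref{lietriops} guarantees these satisfy the defining identities of a (right) Lie trialgebra. On morphisms, any triassociative homomorphism preserves $\dashv, \perp, \vdash$ and hence the two derived operations, so it is a morphism of Lie trialgebras, and functoriality is then immediate. The cleanest formulation is operadic: the assignment determines a morphism of symmetric operads $\psi\colon \TriLie \to \TriAss$ sending the generating brackets $[-,-], \{-,-\}$ to the prescribed quadratic elements of $\TriAss(2)$, and $F$ is the restriction (pullback) functor $\psi^{*}$.

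Once $F = \psi^{*}$ is recognized as restriction along an operad morphism, the existence of a left adjoint is formal: restriction along any morphism of operads has a left adjoint given by the relative composition product, so I would set $U(L) = \TriAss \circ_{\TriLie} L$. Concretely, I would realize $U(L)$ as the free triassociative algebra on the underlying space of $L$, modulo the operad ideal generated by $a \perp b - b \perp a - [a,b]$ and $a \dashv b - b \vdash a - \{a,b\}$ for $a,b \in L$, and check the universal property directly against that of the free triassociative algebra. The natural map $h\colon L \to U(L)$ is the inclusion of generators followed by the quotient projection.

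The real content is the third claim, injectivity of $h$: a triassociative Poincar\'e--Birkhoff--Witt theorem. The plan is to produce an explicit ``ordered monomial'' spanning set for $U(L)$ and then prove linear independence. Spanning should come from a rewriting argument: using the triassociative relations of Definition~\ref{triassdef} together with the two imposed relations, every element of $U(L)$ reduces to a normal form over a fixed ordered basis of $L$. Linear independence is where the difficulty sits, and I see two routes. The first is reduction to known cases: a Lie trialgebra packages a Lie algebra $(L,[-,-])$ and a Leibniz algebra $(L,\{-,-\})$ on one space, and both the classical PBW theorem and its Leibniz/diassociative analogue (the universal enveloping diassociative algebra; cf.\ the remark following Definition~\ref{leibnizdef}) are injective, so I would try to assemble these into a single faithful representation of $U(L)$ compatible with all three operations $\dashv, \perp, \vdash$. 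The second route is operadic: if one can show the relevant operads are Koszul and that $\psi$ is a PBW morphism, then---given the dualities recorded in Lemmas~\ref{TATDduality} and~\ref{lietriops}---the relative composition product would be flat and $h$ automatically injective.

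The main obstacle is exactly this linear independence. The introduction's contrast between special and general Jordan algebras shows that such enveloping maps can fail to be injective outside the Lie setting, so any proof must genuinely exploit structure special to Lie trialgebras. I expect the delicate point to be controlling how the Lie-type operation $\perp$ interacts with the Leibniz-type operations $\dashv, \vdash$ through the bar identities and the Loday--Ronco relation, since this is precisely where a naive concatenation of the separate Lie and Leibniz PBW bases could collapse.
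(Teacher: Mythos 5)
The statement you are addressing is a \emph{conjecture} in the paper: the authors supply no proof, so there is no argument of theirs to compare against. Your first two paragraphs handle the non-conjectural parts correctly and in the standard way: functoriality is immediate from Lemma \ref{lietriops} (the operadic packaging as restriction $\psi^{*}$ along a morphism $\psi\colon \TriLie \to \TriAss$ is exactly right, and is well defined precisely because the lemma verifies the Lie trialgebra relations hold after expansion), and the existence of the left adjoint via the relative composition product $\TriAss \circ_{\TriLie} L$, or equivalently the free triassociative algebra on $L$ modulo the ideal identifying $[a,b]$ with $a \perp b - b \perp a$ and $\{a,b\}$ with $a \dashv b - b \vdash a$, is a formal fact about algebras over operads. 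Up to this point your write-up is sound.

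However, the proposal is not a proof of the conjecture, because its genuinely conjectural content --- injectivity of $h\colon L \to U(L)$, a Poincar\'e--Birkhoff--Witt theorem for the pair $(\TriAss, \TriLie)$ --- is exactly the part you leave as a plan. Neither of your two routes is executed. Route 1 (``assemble a single faithful representation'' from the classical Lie PBW theorem and the Leibniz/diassociative PBW theorem of Aymon--Grivel) names the hard step without performing it: the entire difficulty is whether the cross-relations $\{a,[b,c]\} \equiv \{a,\{b,c\}\}$ and the derivation identity, which entangle $\perp$ with $\dashv, \vdash$ through the bar and Loday--Ronco relations, permit such a compatible representation, and your own final paragraph concedes this is where collapse could occur. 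Route 2 (``show $\psi$ is a PBW morphism, then flatness gives injectivity'') simply renames the problem: establishing the PBW property for this pair of operads \emph{is} the conjecture. So the gap in your proposal is real, but it coincides with the open problem the authors themselves flag by stating this as a conjecture rather than a lemma; what you have is a correct reduction of the conjecture to its essential open kernel, not a resolution of it.
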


\begin{definition}
\label{comtriassdef}
\cite{Loday2008}, \cite{Vallette1}.
A vector space $C$ with bilinear operations $\dashv$, $\perp$ is a \textbf{commutative triassociative algebra}
if it satisfies these conditions:
\begin{itemize}
\item
$(C,\perp)$ is a commutative associative algebra, and
$(C,\dashv)$ is a right perm algebra,
\item
inner associativity: \,
$( a \perp b ) \dashv c \equiv a \perp ( b \dashv c )$,
\item
right bar identity: \,
$a \dashv ( b \perp c ) \equiv a \dashv ( b \dashv c )$.
\end{itemize}
A vector space $L$ with bilinear operations $[-,-]$ and $\{-,-\}$ is a (right) \textbf{post-Lie algebra}
if it satisfies these conditions:
\begin{itemize}
\item
$(L,[-,-])$ is a Lie algebra,
\item
$\{-,-\}$ is a right derivation of $[-,-]$, that is,
$\{ [a,b], c \} \equiv [ \{a,c\}, b ] + [ a, \{b,c\} ]$,
\item
the relation \,
$[ \{ \{ a, b \}, c \} - \{ a, \{ b, c \} \} ] - \big[ \{ \{ a, c \}, b \} - \{ a, \{ c, b \} \} \big]
\equiv
\{ a, [b,c] \}$
\end{itemize}
\end{definition}

\begin{lemma}
\label{postlieops}
\cite{Vallette1}.
In every tridendriform algebra the operations $[a,b] = a \curlywedge b - b \curlywedge a$ and
$\{a,b\} = a \prec b - b \succ a$ satisfy the identities defining (right) post-Lie algebras.
The operads $\PostLie$ for post-Lie algebras and $\ComTriAss$ for commutative triassociative algebras
are a Koszul dual pair.
\end{lemma}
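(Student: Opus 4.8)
The plan is to prove the two assertions by the same two-step method used for Lemma~\ref{lietriops}: a direct verification that the stated operations satisfy the post-Lie relations in every tridendriform algebra, followed by a Koszul-duality computation in $\BW(3)$ that is parallel to the one in the proof of Lemma~\ref{lietriops}, with the commutative tridendriform relations replaced by the commutative triassociative relations of Definition~\ref{comtriassdef}.

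For the first assertion I would begin with the part that comes for free: middle associativity in Definition~\ref{triassdef} makes $\curlywedge$ an associative operation, so its commutator $[a,b] = a \curlywedge b - b \curlywedge a$ is anticommutative and satisfies the Jacobi identity, and hence $(L,[-,-])$ is a Lie algebra. It then remains to check the right-derivation identity $\{[a,b],c\} \equiv [\{a,c\},b] + [a,\{b,c\}]$ and the third post-Lie relation. For each I would substitute $\{a,b\} = a \prec b - b \succ a$ and $[a,b] = a \curlywedge b - b \curlywedge a$, expand every term, and reduce both sides to a normal form using the full list of tridendriform relations from Definition~\ref{triassdef}: the two expansion identities, the three inner-associativity identities, middle associativity, and the Loday--Ronco relation $(a \prec b) \curlywedge c \equiv a \curlywedge (b \succ c)$. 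Since the relations are multilinear it is enough to work on the basis monomials in $a,b,c$, and I would organize the bookkeeping by grouping terms according to their outermost operation $\prec$, $\curlywedge$, or $\succ$.

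For the second assertion I would reuse the linear algebra of Lemma~\ref{lietriops} almost verbatim. Working in the free symmetric operad $\BW$ on one commutative and one noncommutative operation, I would take the ordered basis of $\BW(3)$ in \eqref{BW3basis} but now read $\curlywedge$ as the commutative generator $\perp$ of $\ComTriAss$ and $\prec$ as the noncommutative generator $\dashv$, form the matrix $R$ whose rows are the coefficient vectors of all permutations of the commutative triassociative relations of Definition~\ref{comtriassdef}, and apply the same diagonal sign matrix $D$ (entry $\mathrm{sgn}(\sigma_i)$ on the first association type and $-\mathrm{sgn}(\sigma_i)$ on the second) that encodes Koszul duality for an operad with a commutative operation, generalizing \cite{Loday2001}. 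Setting $R' = RD$ and computing $\mathrm{RCF}(R')$, I would extract the matrix $S$ whose row space is the nullspace of $R'$, and then read off its rows: under the substitution $\perp \mapsto [-,-]$ (now anticommutative) and $\dashv \mapsto \{-,-\}$ they should be exactly the Jacobi identity for $[-,-]$, the permutations of the right-derivation identity, and the permutations of the third post-Lie relation of Definition~\ref{comtriassdef}.

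The main obstacle is the expansion in the first assertion, and specifically the third post-Lie relation, whose left side is an alternating sum of two $\{-,-\}$-associators and whose right side is $\{a,[b,c]\}$. After substitution the left side becomes sixteen monomials in $\prec, \curlywedge, \succ$, and the identity only emerges after repeated use of inner associativity, the two expansion relations, and Loday--Ronco; keeping these cancellations correct is where an error is most likely to creep in. By contrast the duality computation is mechanical once $D$ is in place, since $\mathrm{RCF}$ determines the nullspace; the only point requiring care there is that $D$ records the sign of the argument permutation in each of the $27$ basis monomials, with the extra sign on the second association type, exactly as in Lemma~\ref{lietriops}.
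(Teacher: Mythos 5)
Your proposal is correct, but you should be aware that the paper itself contains no proof of Lemma \ref{postlieops}: the statement is quoted from \cite{Vallette1}, and only the sibling Lemma \ref{lietriops} is proved in the text. What you have done, in effect, is supply the missing argument by transplanting the paper's two-step template for Lemma \ref{lietriops} to the dual pair $(\PostLie, \ComTriAss)$, and both steps go through. For the first assertion, your opening observation is the right one: middle associativity makes $\curlywedge$ associative, so its commutator is automatically a Lie bracket. The remaining expansions close up exactly as you predict: the right-derivation identity follows from the inner associativities for $(\curlywedge,\prec)$ and $(\succ,\curlywedge)$ together with the Loday--Ronco relation, and in the third post-Lie relation the sixteen monomials on the left reduce --- after the left-to-right expansion eliminates $(a \prec b)\prec c$, inner associativity for $(\succ,\prec)$ handles $(b \succ a)\prec c$, and the right-to-left expansion is applied to $c \succ (b \succ a)$ and $b \succ (c \succ a)$ --- to precisely the four $\curlywedge$-terms of $\{a,[b,c]\}$. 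For the second assertion, a dimension count confirms that your mechanical computation must produce what is claimed: since $\dim \ComTriAss(3) = 7$ (one pure $\perp$ class, three perm classes, three mixed classes after inner associativity and the right bar identity collapse the mixed monomials), the permuted \ComTriAss{} relations span a $20$-dimensional subspace of the $27$-dimensional space \eqref{BW3basis}, so the nullspace of $R' = RD$ has dimension $7$, which is exactly the span of the post-Lie relations (one Jacobi identity, three permutations of the derivation identity, three of the third relation); this mirrors the split $14 + 13 = 27$ in Lemma \ref{lietriops}. The one bookkeeping caveat, which you implicitly handle, is that the \ComTriAss{} relation rows must first be rewritten in the normalized basis before $D$ is applied --- e.g.\ $a \perp (b \perp c)$ becomes $(b \perp c) \perp a$, and $(a \dashv b) \perp c$ is the type-one representative of $c \perp (a \dashv b)$ --- since the basis \eqref{BW3basis} retains only left-normed representatives for the commutative operation.
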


\begin{conjecture}
The operations in Lemma \ref{postlieops} define a functor $F\colon \mathbf{TriDend} \to \mathbf{PostLie}$.
We believe that $F$ has a left adjoint $U$ which sends a post-Lie algebra $L$ to its universal enveloping
tridendriform algebra $U(L)$, and that the natural map from $L$ to $U(L)$ is injective.
\end{conjecture}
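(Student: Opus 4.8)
The statement has three parts, of increasing difficulty, and I would treat them in turn. The first, that $F$ is a functor, is immediate from Lemma~\ref{postlieops}. That lemma provides, on the underlying space of any tridendriform algebra $A$, operations $[a,b]=a\curlywedge b-b\curlywedge a$ and $\{a,b\}=a\prec b-b\succ a$ satisfying the post-Lie relations, which defines $F$ on objects. A tridendriform morphism $\phi\colon A\to B$ commutes with $\prec,\curlywedge,\succ$, hence with the derived operations $[-,-]$ and $\{-,-\}$, so it is a post-Lie morphism $F(A)\to F(B)$; since $F$ is the identity on underlying linear maps, it respects identities and composition. Thus $F$ is exactly the restriction functor $\psi^{*}$ along the morphism of symmetric operads $\psi\colon\PostLie\to\STriDend$ sending the two generators of $\PostLie$ to $a\curlywedge b-b\curlywedge a$ and $a\prec b-b\succ a$.

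For the second part, the existence of $U$, I would argue operadically. Both $\mathbf{TriDend}=\mathbf{Alg}_{\STriDend}$ and $\mathbf{PostLie}=\mathbf{Alg}_{\PostLie}$ are categories of algebras over symmetric operads, hence locally presentable, and $F=\psi^{*}$ preserves all limits and filtered colimits; so a left adjoint $U$ exists by the adjoint functor theorem. Concretely $U$ is the relative composite $U(L)=\STriDend\circ_{\PostLie}L$, which one presents as the free tridendriform algebra on the vector space $L$ modulo the operad ideal generated, for all $a,b\in L$, by $a\curlywedge b-b\curlywedge a-[a,b]$ and $a\prec b-b\succ a-\{a,b\}$. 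The adjunction bijection between tridendriform morphisms $U(L)\to A$ and post-Lie morphisms $L\to F(A)$ is read off from this presentation, identifying $U(L)$ as the universal enveloping tridendriform algebra.

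The third part, injectivity of the unit $\eta_L\colon L\to F(U(L))$, is the real content and the exact analogue of the Poincar\'e--Birkhoff--Witt theorem in the post-Lie/tridendriform setting. I see two complementary routes. The conceptual route uses the Manin black-product decompositions $\STriDend\cong\PostLie\bullet\SAssoc$ and $\PostLie\cong\PostLie\bullet\Lie$ from \S\ref{sectionManin}: one shows that $\psi$ is obtained by applying $\PostLie\bullet(-)$ to the classical enveloping morphism $\iota\colon\Lie\to\SAssoc$, $[a,b]\mapsto ab-ba$, and that the black product preserves the PBW property, so that classical PBW for $\iota$ forces $\eta_L$ to be injective. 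Equivalently, one proves that $\STriDend$ is free as a right $\PostLie$-module via $\psi$, say $\STriDend\cong\mathcal{M}\circ\PostLie$ for an $\mathbb{S}$-module $\mathcal{M}$ with $\mathcal{M}(1)\cong\mathbb{F}$; then $U(L)\cong\mathcal{M}(L)$ contains $L$ as its arity-one summand. The computational route, in the spirit of the rest of the paper, fixes an admissible monomial order on the free tridendriform operad, computes an operadic Gr\"obner basis for the presentation of $\STriDend$ together with the enveloping relations above, and extracts a PBW-type normal form for $U(L)$ in which $L$ sits as the linear monomials.

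The main obstacle, in either route, is uniformity across all arities. For the conceptual route this is the claim that the Manin black product preserves the PBW property, equivalently the right-$\PostLie$-module freeness of $\STriDend$; for the computational route it is the passage from the low arities accessible to direct computation to all arities, which reduces to Koszulness of the operads involved, supplied in principle by the Koszul dualities of Lemmas~\ref{TATDduality} and~\ref{postlieops}. Because the pair is of Lie type---in contrast to the Jordan operad, where the natural map is injective only for special algebras and no such PBW theorem holds---we expect the classical argument to survive; but it is precisely the compatibility of the black product with enveloping algebras, uniformly in all arities, that remains unverified, which is why we pose the statement as a conjecture.
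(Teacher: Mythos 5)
The statement you were asked to prove is posed in the paper as a \emph{conjecture} with no proof attached, so there is no paper argument to compare yours against; your proposal must be judged on its own merits. The first two parts of your write-up are correct and complete. Functoriality is indeed immediate: Lemma~\ref{postlieops} furnishes an operad morphism $\psi\colon \PostLie \to \STriDend$ sending the two generators to $a \curlywedge b - b \curlywedge a$ and $a \prec b - b \succ a$, and $F$ is restriction along $\psi$, so it is the identity on underlying linear maps and trivially respects composition. (This part of the conjecture is thus not really conjectural, and your observation settles it.) The existence of the left adjoint $U$ via local presentability, or equivalently via operadic induction with the presentation of $U(L)$ as the free tridendriform algebra on $L$ modulo the relations $a \curlywedge b - b \curlywedge a - [a,b]$ and $a \prec b - b \succ a - \{a,b\}$, is likewise standard and sound; it is the exact analogue of the constructions $I(L)$, $I(J)$ in the paper's introduction.

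The genuine gap --- which, to your credit, you flag explicitly --- is the injectivity of the unit $\eta_L$, and your two proposed routes both rest on unproved lemmas that do not follow from anything in the paper. For the conceptual route: the claim that the Manin black product $\PostLie \bullet (-)$ preserves the PBW property is not established anywhere, and is not formal --- black products are notoriously difficult to compute (the paper's references on computing Manin products exist precisely because of this), and you would additionally need to check that the induced morphism $\PostLie \bullet \Lie \to \PostLie \bullet \SAssoc$ agrees with $\psi$ on generators under the identifications $\PostLie \cong \PostLie \bullet \Lie$ and $\STriDend \cong \PostLie \bullet \SAssoc$ of \S\ref{sectionManin}. The equivalent right-module freeness $\STriDend \cong \mathcal{M} \circ \PostLie$ is exactly the kind of statement proved by filtered distributive laws in the cited work of Dotsenko--Griffin for the Lie trialgebra pair, but no such result is available here. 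For the computational route: the assertion that uniformity in all arities ``reduces to Koszulness'' is not right --- Koszulness of $\STriDend$ and $\PostLie$ controls Hilbert series, but a PBW normal form for $U(L)$ over an \emph{arbitrary} post-Lie algebra $L$ requires a convergent rewriting system for the enveloping presentation uniformly in $L$, which is a separate (colored-operadic) Gr\"obner computation, not supplied by Lemmas~\ref{TATDduality} and~\ref{postlieops}. The paper's own Jordan discussion shows the danger: units of such enveloping adjunctions can genuinely fail to be injective, so real input is unavoidable. Your proposal is therefore not a proof but an honest and useful reduction of the conjecture to a precise open lemma (black-product compatibility with PBW, equivalently right-$\PostLie$-freeness of $\STriDend$), matching the status the paper itself assigns to the statement.
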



\section{Jordan trialgebras}
\label{jordantrialgebrasection}

In this section we present the first original contribution of this paper:
we use computer algebra to determine a set of $S_n$-module generators for the multilinear polynomial identities
of arities $n = 3, 4$ satisfied by the Jordan product $a \,\circ\, b = a \perp b + b \perp a$ (commutative) and
Jordan diproduct $a \bullet b = a \dashv b  + b \vdash a$ (noncommutative) in every triassociative algebra.
These generators are the identities defining Jordan trialgebras.
We use representation theory of the symmetric group to show that there are no new identities in arities 5 and 6.

\begin{definition}
We also use $\circ, \bullet$ for the operations in $\BW$; this abuse of notation should not cause confusion.
The \textbf{expansion map} $E(n)\colon \BW(n) \to \STriAss(n)$ is defined on basis monomials in $\BW(n)$ as
follows: $E(n)$ is the identity on arguments, and expands every operation symbol $\circ, \bullet$ using the expressions in the previous paragraph, to produce a linear combination of basis monomials in $\STriAss(n)$
with some permutation of the arguments.
The (nonzero) elements of the kernel of $E(n)$ are the (multilinear) polynomial identities of arity $n$
satisfied by the operations $\circ, \bullet$ in every triassociative algebra.
\end{definition}


\subsection{Relations of arity 3}

\begin{lemma} \label{jordantrialgebralemma}
Over a field $\mathbb{F}$ of characteristic 0 or $p > 3$, every element of the kernel of $E(3)$
is a consequence of the commutativity of $\circ$ and the relation
$a \bullet (b \,\circ\, c) \equiv a \bullet (b \bullet c)$.
\end{lemma}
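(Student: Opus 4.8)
The plan is to reduce the claim to an explicit finite linear-algebra computation in arity 3, exactly parallel to the proofs of Lemmas \ref{TATDduality} and \ref{lietriops}. The expansion map $E(3)\colon \BW(3) \to \STriAss(3)$ is a morphism of $S_3$-modules between finite-dimensional vector spaces, so its kernel is computed by writing down a matrix for $E(3)$ with respect to ordered monomial bases and finding its nullspace. First I would fix an ordered basis of $\BW(3)$ (analogous to \eqref{BW3basis}, but now using the two operations $\circ, \bullet$, where $\circ$ is commutative and $\bullet$ has no symmetry); note that after accounting for the commutativity of $\circ$ the dimension of $\BW(3)$ is smaller than for two free operations. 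Then I would fix an ordered basis of $\STriAss(3)$; since $\STriAss$ is the symmetrization of the free nonsymmetric operad $\BBB$ on three operations $\dashv, \perp, \vdash$, a basis consists of the $18$ skeletons of \eqref{18monomials} paired with the $6$ permutations in $S_3$, modulo the $7$ triassociative relations (the rows of $S$ in Lemma \ref{TATDduality}) and their images under $S_3$. Concretely, I would work inside the free object and carry the triassociative relations as a separate submodule, so that membership in $\ker E(3)$ means the expansion lands in the $S_3$-submodule of $\BBB(3)$-monomials spanned by the symmetrized triassociative relations.

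Next I would build the expansion matrix. Each basis monomial of $\BW(3)$ is sent by $E(3)$ to a linear combination of triassociative monomials, obtained by substituting $a \circ b \mapsto a \perp b + b \perp a$ and $a \bullet b \mapsto a \dashv b + b \vdash a$ and expanding; for a depth-two monomial this produces four terms. Stacking these images as rows (or columns) gives a matrix over $\mathbb{F}$, and I would compute its null space by row reduction, working over a prime field $\mathbb{F}_p$ with $p > 3$ so that $\mathbb{F}_p S_3$ is semisimple and the modular computation reconstructs the rational answer (the hypothesis $\operatorname{char}\mathbb{F} = 0$ or $p > 3$ is precisely what guarantees this, and in particular that the factor $2$ appearing in products like $a \circ a = 2(a\perp a)$ is invertible). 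The output of this computation is a spanning set for $\ker E(3)$, which I would then present in row canonical form.

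The crux is then identifying this computed null space with the $S_3$-submodule generated by the two asserted relations: commutativity of $\circ$, and $a \bullet (b \circ c) \equiv a \bullet (b \bullet c)$. For this I would compute, in the same bases, the $S_3$-submodule of $\BW(3)$ generated by these two relations, i.e.\ take all permuted images under the $S_3$-action (six permutations of each relation) and reduce to row canonical form; the claim is that the two row-reduced matrices coincide. Equivalently, I would verify the two inclusions separately: each generator visibly lies in $\ker E(3)$ (a short direct expansion, using that $\circ$ expands symmetrically so both sides of $a\bullet(b\circ c)\equiv a\bullet(b\bullet c)$ have the same image), and the dimensions match so there are no further independent identities. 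The main obstacle I anticipate is purely organizational rather than conceptual: correctly enumerating the reduced basis of $\STriAss(3)$ after imposing the seven triassociative relations across all six permutations, and bookkeeping the commutativity of $\circ$ consistently in the domain, so that the reported kernel dimension is exactly the dimension of the submodule generated by the two stated relations and no spurious identities survive.
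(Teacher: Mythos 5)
Your overall strategy coincides with the paper's: fix the $27$-element monomial basis of $\BW(3)$ (commutativity of $\circ$ built into the basis), represent $E(3)$ as a matrix over $\mathbb{F}_p$ with $p>3$, compute its nullspace, and identify that nullspace with the $S_3$-submodule generated by the black right bar identity. The only implementation difference is that the paper builds a basis of the codomain by normal forms -- since the triassociative relations are monomial, the $18$ skeletons \eqref{18monomials} fall into $7$ equivalence classes, giving a $42\times 27$ expansion matrix of rank $21$ and nullity $6$ -- whereas you keep the full $108$-dimensional symmetrized free module $\Sigma\BBB(3)$ and carry the relations along as a submodule. That variant is perfectly legitimate.

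There is, however, one concrete error that would derail your computation as written: you declare the triassociative relations to be ``the $7$ relations given by the rows of $S$ in Lemma \ref{TATDduality}.'' The rows of $S$ are the \emph{tridendriform} relations (the Koszul dual); the triassociative relations are the $11$ rows of $R$, i.e.\ the eleven relation instances of Definition \ref{triassdef} (three associativities, four bar identities, three inner associativities, Loday--Ronco). This is not a harmless misreference. If you expand $a\circ b \mapsto a\perp b + b\perp a$, $a\bullet b \mapsto a\dashv b + b\vdash a$ but quotient by the span of the rows of $S$, then after renaming $\dashv,\perp,\vdash$ to $\prec,\curlywedge,\succ$ you are performing exactly the post-Jordan computation of Section \ref{postjordanalgebrasection}, whose arity-3 expansion matrix has full rank and hence trivial kernel. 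You would conclude that no identities exist beyond commutativity, and the black right bar identity $a\bullet(b\circ c)\equiv a\bullet(b\bullet c)$ -- which holds in $\TriAss$ precisely because of the monomial bar identities $a\dashv(b\dashv c)\equiv a\dashv(b\ast c)$ and $(a\vdash b)\vdash c\equiv(a\ast b)\vdash c$, relations with no counterpart in the tridendriform list -- would not appear in your kernel, contradicting the very statement you are proving. (The confusion is understandable: the paper's introduction says $\STriAss$ has ``seven quadratic relations'' and $\STriDend$ eleven, which is backwards relative to $R$, $S$, and Lemma \ref{TDGSlemma}; the counts are $11$ for $\TriAss$ and $7$ for $\TriDend$, so $\dim\TriAss(3)=7$ and $\dim\TriDend(3)=11$.) With the correct set of eleven relations -- or equivalently the seven monomial normal forms, as in the paper -- the rest of your plan goes through.
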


\begin{definition}
The relation of arity 3 in Lemma \ref{jordantrialgebralemma} is the \textbf{black right bar identity}.
\end{definition}

\begin{proof}
(of Lemma \ref{jordantrialgebralemma}).
We first find monomial bases for $\BW(3)$ and $\STriAss(3)$ so that we can represent $E(3)$ by a matrix
and compute a basis for its nullspace.

Since $\TriAss$ is defined by monomial relations, we find a basis for $\TriAss(3)$ as follows.
The triassociative relations generate an equivalence relation $\sim$ on the set $B$ of 18 (nonsymmetric)
monomials \eqref{18monomials}: we define $t_1 \sim t_2$ if and only if $t_1 \equiv t_2$ is one of the
relations in Definition \ref{triassdef}.
We take the quotient of $B$ by $\sim$ to obtain a set partition $B/{\sim}$.
In each equivalence class we choose, as the normal form of the elements in the class,
the monomial which is minimal with respect to the total order in the proof of Lemma \ref{TATDduality};
we use these representatives as a basis of $\TriAss(3)$.
We obtain the following normal forms; the first two classes each contain four monomials, and the last five
each contain two:
\[
(a {\,\vdash\,} b) {\,\vdash\,} c, \;
(a {\,\dashv\,} b) {\,\dashv\,} c, \;
(a {\,\perp\,}  b) {\,\perp\,}  c, \;
(a {\,\vdash\,} b) {\,\dashv\,} c, \;
(a {\,\perp\,}  b) {\,\dashv\,} c, \;
(a {\,\vdash\,} b) {\,\perp\,}  c, \;
(a {\,\dashv\,} b) {\,\perp\,}  c.
\]
To symmetrize, we apply all 6 permutations of $a, b, c$ to the arguments of the normal forms, and obtain
a basis of 42 monomials for $\STriAss(3)$.

Since $\BW$ is a symmetric operad, to find an ordered basis for $\BW(3)$, we take the 27 monomials \eqref{BW3basis}
from the proof of Lemma \ref{lietriops} and replace the symbols $\prec, \curlywedge$ by $\bullet, \circ$.

The columns of the $42 \times 27$ matrix representing $E(3)$ with these ordered bases are
the coefficient vectors of the expansions of the basis monomials of $\BW(3)$ into $\STriAss(3)$;
we must replace the triassociative monomials by their normal forms.
For example,
\begin{alignat*}{2}
(a \,\circ\, b) \bullet c
&\mapsto
( a \perp b  + b \perp a ) \dashv c
+
c \vdash ( a \perp b  + b \perp a )
&\qquad
&\text{expansion map}
\\[-2pt]
&=
( a \perp b ) \dashv c +
( b \perp a ) \dashv c +
c \vdash ( a \perp b ) +
c \vdash ( b \perp a )
&\qquad
&\text{bilinearity}
\\[-2pt]
&=
( a \perp b ) \dashv c + ( b \perp a ) \dashv c
+
( c \vdash a ) \perp b  + ( c \vdash b ) \perp a
&\qquad
&\text{normal form}
\end{alignat*}
The matrix has rank 21 and nullity 6.
We find a basis for the nullspace and write down the corresponding identities,
which are the 6 permutations of the black right bar identity.
\end{proof}

\begin{remark}
\label{rightbarremark}
The black right bar identity and the commutativity of $\circ$ imply right commutativity for $\bullet$
(Definition \ref{jordialgdef}): we have
$a \bullet ( b \bullet c ) \equiv a \bullet ( b \circ c ) \equiv a \bullet ( c \circ b ) \equiv a \bullet ( c \bullet b )$.
\end{remark}


\subsection{The operad $\BW$: skeletons, total order, normal forms}

Before studying arities $n \ge 4$, we need to understand skeletons in $\BW$;
as before, a skeleton in arity $n$ is a placement of parentheses and a choice of operation symbols
in a sequence of $n$ indistinguishable arguments.
For the operad $\BW$, their enumeration is not trivial due to the commutativity of $\circ$.

\begin{definition}
\label{BWSdefinition}
It is convenient to consider $\BW$ not in the monoidal category $\VectF$ with tensor product,
but in $\mathbf{Set}$ with direct product; we call the latter operad
$\BW$-$\mathbf{Set}$.
(Equivalently, we consider a basis of each $\BW(n)$ rather than the entire vector space.)
Since $\BW$-$\mathbf{Set}$ is a symmetric operad, we cannot use the symbol $x$ for every argument.
But we achieve the same goal by considering the free algebra $\BWS$ over $\BW$-$\mathbf{Set}$ generated
by $x$; this algebra consists of all $\BW$-\textbf{skeletons}.
We write $\BWS(n)$ for the subset of $\BWS$ consisting of the elements of arity $n$;
that is, those in which $x$ occurs exactly $n$ times.
This device allows us to use the symbol $x$ as a placeholder for unspecified arguments in a symmetric operad.
\end{definition}

\begin{definition}
\label{BWorderdefinition}
For $n \ge 1$, we inductively define a \textbf{total order} $\ll_n$ on $\BWS(n)$:
\begin{enumerate}[(1)]
\item
For $n = 1$, we have $\BWS(1) = \{ x \}$.
In what follows we omit the subscript on $\ll_n$.
\item
For $n \ge 2$, if $h \in \BWS(n)$ then $h = f \,\circ\, g$ or $h = f \bullet g$ with
$1 \le \mathrm{arity}(f), \mathrm{arity}(g) < n$.
\begin{enumerate}[(a)]
\item
We set $f_1 \,\circ\, g_1 \ll f_2 \bullet g_2$ for all $f_1 \,\circ\, g_1, \, f_2 \bullet g_2 \in \BWS(n)$.
\item
Consider $f_1 \bullet g_1$ and $f_2 \bullet g_2$.
If $\mathrm{arity}(f_1) > \mathrm{arity}(f_2)$, or
$\mathrm{arity}(f_1) = \mathrm{arity}(f_2)$ and $f_1 \ll f_2$, or
$f_1 = f_2$ and $g_1 \ll g_2$, then we set $f_1 \bullet g_1 \ll f_2 \bullet g_2$.
\item
Consider  $f_1 \circ g_1$ and $f_2 \circ g_2$.
We may assume $\mathrm{arity}(f_i) \ge \mathrm{arity}(g_i)$ for $i = 1, 2$.
\begin{enumerate}[(i)]
\item
If $\mathrm{arity}(f_1) > \mathrm{arity}(f_2)$ then we set $f_1 \,\circ\, g_1 \ll f_2 \,\circ\, g_2$.
\item
If $\mathrm{arity}(f_1) = \mathrm{arity}(f_2)$ then we also have $\mathrm{arity}(g_1) = \mathrm{arity}(g_2)$.
\begin{itemize}
\item
Assume $\mathrm{arity}(f_1) > \mathrm{arity}(g_1)$.
If $f_1 \ll f_2$, or $f_1 = f_2$ and $g_1 \ll g_2$, then we set $f_1 \circ g_1 \ll f_2 \circ g_2$.
\item
Assume $\mathrm{arity}(f_1) = \mathrm{arity}(g_1)$.
In this case, $n$ is even and all four factors have arity $n/2$.
By commutativity, $f_i \ll g_i$ (or $f_i = g_i$) for $i = 1, 2$.
If $f_1 \ll f_2$, or $f_1 = f_2$ and $g_1 \ll g_2$, then we set $f_1 \circ g_1 \ll f_2 \circ g_2$.
\end{itemize}
\end{enumerate}
\end{enumerate}
\end{enumerate}
\end{definition}

\begin{example}
\label{25skeletons}
The 25 skeletons in $\BWS(4)$ in the order of Definition \ref{BWorderdefinition} are as follows:
\vspace{-1mm}
\[
\begin{array}{l@{\;\;}l@{\;\;}l@{\;\;}l@{\;\;}l}
( ( x \circ x ) \circ x ) \circ x, &
( ( x \bullet x ) \circ x ) \circ x, &
( ( x \circ x ) \bullet x ) \circ x, &
( ( x \bullet x ) \bullet x ) \circ x, &
( x \bullet ( x \circ x ) ) \circ x, \\[-1pt]
( x \bullet ( x \bullet x ) ) \circ x, &
( x \circ x ) \circ ( x \circ x ), &
( x \circ x ) \circ ( x \bullet x ), &
( x \bullet x ) \circ ( x \bullet x ), &
( ( x \circ x ) \circ x ) \bullet x, \\[-1pt]
( ( x \bullet x ) \circ x ) \bullet x, &
( ( x \circ x ) \bullet x ) \bullet x, &
( ( x \bullet x ) \bullet x ) \bullet x, &
( x \bullet ( x \circ x ) ) \bullet x, &
( x \bullet ( x \bullet x ) ) \bullet x, \\[-1pt]
( x \circ x ) \bullet ( x \circ x ), &
( x \circ x ) \bullet ( x \bullet x ), &
( x \bullet x ) \bullet ( x \circ x ), &
( x \bullet x ) \bullet ( x \bullet x ), &
x \bullet ( ( x \circ x ) \circ x ), \\[-1pt]
x \bullet ( ( x \bullet x ) \circ x ), &
x \bullet ( ( x \circ x ) \bullet x ), &
x \bullet ( ( x \bullet x ) \bullet x ), &
x \bullet ( x \bullet ( x \circ x ) ), &
x \bullet ( x \bullet ( x \bullet x ) ).
\end{array}
\]
\end{example}

\subsubsection{Computing the normal form of a multilinear $\BW$ monomial}

We apply commutativity of $\circ$ to straighten first the skeleton and then the arguments; for example,
$c \circ ( b \circ a )$ becomes $( b \circ a ) \circ c$ and then $( a \circ b ) \circ c$.
Straightening the skeleton is equivalent to choosing a unique representative of each equivalence
class in $\BWS(n)$ with respect to commutativity:
the skeleton $x \circ ( x \circ x )$ has representative $( x \circ x ) \circ  x$.
This algorithm is necessary when we compute the consequences in arity $n{+}1$ of a $\BW$ polynomial
in arity $n$: applying the substitution maps of Definition \ref{composeskeletons} to the set of representatives of equivalence classes in $\BWS(n)$ does not necessarily produce representatives of equivalence classes in $\BWS(n+1)$.

\begin{algorithm}
For $n = 1$ there is only one multilinear monomial $x_1$ which by definition is already in normal form.
For $n \ge 2$, every multilinear monomial of arity $n$ has the form $f \,\circ\, g$ or $ f \bullet g$
(we write $f \ast g$ to cover both cases) where $1 \le \mathrm{arity}(f), \mathrm{arity}(g) < n$:
\begin{enumerate}[(1)]
\item
Recursively compute $f'$ and $g'$, the normal forms of $f$ and $g$.
\item
If $\mathrm{arity}(f') > \mathrm{arity}(g')$ then return $f' \ast g'$.
\item
If $\mathrm{arity}(f') < \mathrm{arity}(g')$ then:
if $\ast \; = \circ$ then return $g' \ast f'$ else return $f' \ast g'$.
\item
If $\mathrm{arity}(f') = \mathrm{arity}(g')$ then:
\begin{enumerate}[(a)]
\item
Extract $s(f')$ and $s(g')$, the skeletons of $f'$ and $g'$.
\item
If $s(f') \ll s(g')$ in the total order on $\BWS(n/2)$ then return $f' \ast g'$.
\item
If $s(f') \gg s(g')$ then:
if $\ast \; = \circ$ then return $g' \ast f'$ else return $f' \ast g'$.
\item
If $s(g') = s(f')$ then:
\begin{enumerate}[(i)]
\item
Extract $p(f')$ and $p(g')$, the sequences of subscripts of the arguments of $f'$ and $g'$;
as sets, $p(f') \cap p(g') = \emptyset$ and $p(f') \cup p(g') = \{1,\dots,n\}$.
\item
If $p(f')$ precedes $p(g')$ in lex order then return $f' \ast g'$, where lex order means to compare
the leftmost unequal elements.
\item
If $p(g')$ precedes $p(f')$ in lex order then:
if $\ast \; = \circ$ then return $g' \ast f'$ else return $f' \ast g'$.
\item
If $p(f') = p(g')$ then return $f' \ast g'$.
\end{enumerate}
\end{enumerate}
\end{enumerate}
\end{algorithm}


\subsection{Relations of arity 4}

\begin{theorem}
\label{jordantrialgebraproposition}
Over a field $\mathbb{F}$ of characteristic 0 or $p > 4$, every multilinear polynomial identity of
arity $\le 4$, satisfied by the Jordan product and diproduct in every
triassociative algebra, is a consequence of the commutativity of $\circ$, the black right
bar identity, and the (linearizations of the) following identities of arity 4:
\begin{align}
\label{jordan}
&
( (a \,\circ\, a) \,\circ\, b ) \,\circ\, a
\equiv
( a \,\circ\, a ) \,\circ\, ( b \,\circ\, a ),
\\[-1mm]
\label{quasijordan1}
&
( a \bullet (b \bullet b ) ) \bullet b
\equiv
( a \bullet b ) \bullet ( b \bullet b ),
\\[-1mm]
\label{quasijordan2}
&
( ( a \bullet b ) \bullet d ) \bullet c
+
( ( a \bullet c ) \bullet d ) \bullet b
+
a \bullet ( ( b \bullet c ) \bullet d )
\\[-1mm]
\notag
&\qquad
\equiv
( a \bullet ( b \bullet c ) ) \bullet d
+
( a \bullet ( b \bullet d ) ) \bullet b
+
( a \bullet ( c \bullet d ) ) \bullet b,
\\[-1mm]
\label{JTAnew1}
&
  ( a  \,\circ\,  a )  \,\circ\,  ( a \bullet b )
  \equiv
  ( ( a  \,\circ\,  a ) \bullet b )  \,\circ\,  a,
\\[-1mm]
\label{JTAnew2}
&
    ( ( a \bullet d )  \,\circ\,  c )  \,\circ\,  b
  + ( ( b \bullet d )  \,\circ\,  c )  \,\circ\,  a
  + ( ( a  \,\circ\,  b )  \,\circ\,  c ) \bullet d
\\[-1mm]
\notag
&\qquad \equiv
    ( ( a  \,\circ\,  b ) \bullet d )  \,\circ\,  c
  + ( ( a  \,\circ\,  c ) \bullet d )  \,\circ\,  b
  + ( ( b  \,\circ\,  c ) \bullet d )  \,\circ\,  a,
\\[-1mm]
\label{JTAnew3}
&
    ( ( a \bullet d ) \bullet c )  \,\circ\,  b
  + ( ( b \bullet d ) \bullet c )  \,\circ\,  a
  + ( ( a  \,\circ\,  b ) \bullet c ) \bullet d
\\[-1mm]
\notag
&\qquad \equiv
    ( b \bullet ( c \bullet d ) )  \,\circ\,  a
  + ( ( a \bullet c )  \,\circ\,  b ) \bullet d
  + ( ( a \bullet d )  \,\circ\,  b ) \bullet c,
\\[-1mm]
\label{JTAnew4}
&
    ( a \bullet c )  \,\circ\,  ( b \bullet d )
  + ( a \bullet d )  \,\circ\,  ( b \bullet c )
  + ( a  \,\circ\,  b ) \bullet ( c \bullet d )
\\[-1mm]
\notag
&\qquad \equiv
    ( b \bullet ( c \bullet d ) )  \,\circ\,  a
  + ( ( a \bullet c )  \,\circ\,  b ) \bullet d
  + ( ( a \bullet d )  \,\circ\,  b ) \bullet c.
\end{align}
\end{theorem}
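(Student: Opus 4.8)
The plan is to reduce the statement to a finite-dimensional linear algebra computation over $\mathbb{F}$, organized by the representation theory of $S_4$. The identities of arity $\le 3$ are already accounted for by Lemma \ref{jordantrialgebralemma}, so the substance of the argument lies in arity $4$. First I would fix an ordered basis of multilinear monomials for $\BW(4)$, built from the $25$ skeletons of Example \ref{25skeletons} by inserting all permutations of $x_1, \dots, x_4$ and straightening with the normal-form algorithm, together with a basis for $\STriAss(4)$ obtained by symmetrizing the normal forms of $\TriAss$ at arity $4$ (as in the proof of Lemma \ref{jordantrialgebralemma}). With respect to these bases, $E(4)\colon \BW(4) \to \STriAss(4)$ is represented by an explicit matrix whose columns are the expansions of the $\BW(4)$ basis monomials, reduced to normal form in $\STriAss(4)$; its nullspace is the space of all multilinear identities of arity $4$.

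The delicate point is to separate the genuinely new arity-$4$ identities from those already forced in lower arity. Let $C(4) \subseteq \BW(4)$ be the $S_4$-submodule of consequences of the arity-$\le 3$ relations, that is, of commutativity of $\circ$ and the black right bar identity. This submodule is produced by applying the operadic composition maps of \eqref{operadcompositions} to the arity-$3$ generators and closing under the $S_4$-action. Since $E$ is an operad morphism and those generators lie in $\ker E(3)$ by Lemma \ref{jordantrialgebralemma}, we automatically have $C(4) \subseteq \ker E(4)$. The task is then to find $S_4$-module generators for the quotient $\ker E(4) / C(4)$ and to confirm that the multilinear linearizations of the seven identities \eqref{jordan}--\eqref{JTAnew4} project onto a generating set of this quotient.

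To keep the computation tractable I would work one isotypic component at a time. Because $\mathbb{F} S_4$ is semisimple (characteristic $0$ or $p > 4$), each of $\BW(4)$, $\STriAss(4)$, $\ker E(4)$, and $C(4)$ splits as a direct sum over the partitions $\lambda \vdash 4$, and in each component $E(4)$ becomes a matrix over the relevant matrix algebra whose rank and nullity can be computed independently. Comparing, for each $\lambda$, the multiplicity of $\lambda$ in $\ker E(4)$ against its multiplicity in $C(4)$ isolates exactly the new identities, and verifying that the seven listed identities contribute the missing generator in each surplus component finishes the proof.

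The main obstacle is essentially bookkeeping, of two kinds. First, the commutativity of $\circ$ collapses permutations of the arguments, so the monomial basis of $\BW(4)$ must be enumerated with care, applying the straightening algorithm to skeletons and to argument strings before the matrix of $E(4)$ is assembled. Second, one must compute $C(4)$ exhaustively, lest a consequence of the lower-arity relations be mistaken for a new identity; this is where the operadic composition maps and the $S_4$-closure must be applied completely. Once the isotypic multiplicities of $\ker E(4)$ coincide with those of $C(4)$ after adjoining the seven identities, completeness is guaranteed and no generator can have been overlooked.
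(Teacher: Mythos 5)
Your proposal is correct and takes essentially the same approach as the paper: represent $E(4)$ by a matrix with respect to normal-form monomial bases of $\BW(4)$ and $\STriAss(4)$, compute its nullspace (all multilinear identities of arity $4$), form the $S_4$-submodule of consequences of the arity-$\le 3$ identities, and check that the seven listed identities generate the quotient. The differences are only organizational: the paper performs the arity-$4$ computation with the full $360 \times 405$ matrix over $\mathbb{F}_{101}$ (reserving the isotypic decomposition for arities $5$ and $6$), and it builds its module of known identities in stages by first adjoining the Jordan and di-Jordan identities (already known from the literature to lie in the kernel), so that the genuinely new computational content is the discovery of the four mixed identities \eqref{JTAnew1}--\eqref{JTAnew4}.
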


\begin{proof}
Similar to that of Lemma \ref{jordantrialgebralemma}, but the matrices are larger,
and we must deal with the consequences of the black right bar identity of arity 3.
We first construct the 135 basis skeletons in $\BBB(4)$, where $\ast_1, \ast_2, \ast_3$
are chosen freely from $\{ \dashv, \perp, \vdash \}$:
\[
( ( x { \ast_1 } x ) { \ast_2 } x ) { \ast_3 } x, \;
( x { \ast_1 } ( x { \ast_2 } x ) ) { \ast_3 } x, \;
( x { \ast_1 } x ) { \ast_2 } ( x { \ast_3 } x ), \;
x { \ast_1 } ( ( x { \ast_2 } x ) { \ast_3 } x ), \;
x { \ast_1 } ( x { \ast_2 } ( x { \ast_3 } x ) ).
\]
Second, we generate the 165 consequences in arity 4 of the defining relations for $\TriAss$
using \eqref{operadcompositions}: each relation $f(a,b,c)$ has 15 consequences where $\ast$ is
chosen from $\{ \dashv, \perp, \vdash \}$:
\[
f( a \ast d, b, c ), \quad\;
f( a, b \ast d, c ), \quad\;
f( a, b, c \ast d ), \quad\;
f( a, b, c ) \ast d, \quad\;
d \ast f( a, b, c ).
\]
These consequences generate an equivalence relation on the basis of $\myfont{BBB}(4)$,
which has 15 classes representing a basis of $\TriAss(4)$.
We choose the minimal element in each class as the normal form of the (nonsymmetric) monomials in that class:
\[
\begin{array}{llll}
( ( a \vdash b ) \vdash c ) \vdash d, &
( ( a \vdash b ) \vdash c ) \dashv d, &
( ( a \vdash b ) \vdash c ) \perp d,  &
( ( a \vdash b ) \dashv c ) \dashv d, \\
( ( a \vdash b ) \dashv c ) \perp d,  &
( ( a \vdash b ) \perp c ) \dashv d,  &
( ( a \vdash b ) \perp c ) \perp d,   &
( ( a \dashv b ) \dashv c ) \dashv d, \\
( ( a \dashv b ) \dashv c ) \perp d,  &
( ( a \dashv b ) \perp c ) \dashv d,  &
( ( a \dashv b ) \perp c ) \perp d,   &
( ( a \perp b ) \dashv c ) \dashv d,  \\
( ( a \perp b ) \dashv c ) \perp d,   &
( ( a \perp b ) \perp c ) \dashv d,   &
( ( a \perp b ) \perp c ) \perp d.
\end{array}
\]
To construct a monomial basis of the symmetrization, we apply all permutations of $a, b, c, d$;
thus $\STriAss(4)$, the codomain of $E(4)$, has dimension 360.

The domain of $E(4)$ is the space $\BW(4)$.
To generate an ordered basis of $\BW(4)$, we start with the 25 skeletons in Example \ref{25skeletons}.
In each skeleton, we replace the four occurrences of the symbol $x$ by all permutations of $x_1,\dots,x_4$;
for each resulting multilinear monomial we compute the normal form with respect to commutativity,
and save only those monomials which are irreducible (equal to their normal forms).
The number of monomials we obtain for each skeleton is $4!/2^s$ where $s$ is the number of commutative symmetries
in the skeleton, where by definition a symmetry is a sub-skeleton of the form $f \circ f$.
For example, the skeleton $( x \,\circ\, x ) \,\circ\, ( x \,\circ\, x )$ has three symmetries:
one with $f = x \circ x$ and two with $f = x$, so it has only three multilinear monomials,
corresponding to permutations 1234, 1324, 1423 of the subscripts.
We order these monomials first by skeleton and then by permutation.
The total number of multilinear monomials over all skeletons is $\dim\BW(4) = 405$.

With respect to the ordered monomial bases of $\BW(4)$ and $\STriAss(4)$, the expansion map
$E(4)\colon \BW(4) \to \STriAss(4)$ is represented by the $360 \times 405$ matrix $E_4$.
The $(i,j)$ entry of $E_4$ is the coefficient of the $i$-th $\STriAss$ monomial in the expansion
of the $j$-th $\BW$ monomial.
We must replace each $\STriAss$ monomial $m$ in each expansion by the equivalent element in the monomial
basis for $\STriAss(4)$; we replace the $\TriAss$ skeleton of $m$ by the representative of the corresponding
equivalence class.
For example,
\[
\begin{array}{l@{\;}l}
&
E(4)\big( \, ( a \bullet b ) \,\circ\, ( c \bullet d ) \,\big)
\\
=
&
( a \dashv b ) \perp ( c \dashv d ) +
( c \dashv d ) \perp ( a \dashv b ) +
( a \dashv b ) \perp ( d \vdash c ) +
( d \vdash c ) \perp ( a \dashv b ) + {}
\\
&
( b \vdash a ) \perp ( c \dashv d ) +
( c \dashv d ) \perp ( b \vdash a ) +
( b \vdash a ) \perp ( d \vdash c ) +
( d \vdash c ) \perp ( b \vdash a )
\\
=
&
( ( a \dashv b ) \perp c ) \dashv d +
( ( c \dashv d ) \perp a ) \dashv b +
( ( a \dashv b ) \dashv d ) \perp c +
( ( d \vdash c ) \perp a ) \dashv b + {}
\\
&
( ( b \vdash a ) \perp c ) \dashv d +
( ( c \dashv d ) \dashv b ) \perp a +
( ( b \vdash a ) \dashv d ) \perp c +
( ( d \vdash c ) \dashv b ) \perp a.
\end{array}
\]
In this way, we initialize $E_4$, which has entries in $\{ 0, 1 \}$.
This matrix is large, so we use modular arithmetic ($p = 101$) to compute its RCF and the corresponding
basis for its nullspace.

We find that $E_4$ has rank 165 and nullity 240.
Its nullspace $N(4) \subseteq \BW(4)$ is the $S_4$-module which contains the coefficient vectors
of the multilinear identities of arity 4 satisfied by the Jordan product and diproduct in every
triassociative algebra.
The new identities are the (nonzero) elements of the quotient module $N(4)/M(4)$ where
$M(4)$ is the $S_4$-module generated by the known identities:
\begin{enumerate}[(1)]
\item
Consequences of the black right bar identity,
$f(a,b,c) = a \bullet (b \,\circ\, c) - a \bullet (b \bullet c) \equiv 0$,
which generate an $S_4$-module of dimension 180:
$f( a \,\circ\, d, b, c )$,
$f( a, b \,\circ\, d, c )$,
$f( a, b, c \,\circ\, d )$,
$f( a, b, c ) \,\circ\, d$,
$f( a \bullet d, b, c )$,
$f( a, b \bullet d, c )$,
$f( a, b, c \bullet d )$,
$f( a, b, c ) \bullet d$,
$d \bullet f( a, b, c )$.
\item
Linearization of the Jordan identity for $a \,\circ\, b = a \perp b + b \perp a$.
Combining this with the previous generators, we obtain an $S_4$-module of dimension 184.
\item
Remark \ref{rightbarremark} shows that we do not need to consider the consequences of
right commutativity for $\bullet$, since they are also consequences
of the black right bar identity.
\item
Linearizations of the Jordan dialgebra identities for
$a \bullet b = a \dashv b + b \vdash a$.
Combining these with the other generators, we obtain the $S_4$-module $M(4)$ of dimension 200.
\end{enumerate}
We represent $M(4)$ as the row space of a $200 \times 405$ matrix $M_4$.
The quotient module $N(4)/M(4)$ of new identities of arity 4 has dimension 40.

From the RCF of $E_4$ we extract a ``canonical'' basis of $N(4)$ as follows:
for each standard basis vector $v$ of $\mathbb{F}_p^{240}$, we set the 240 free variables equal to
the components of $v$ and
solve for the leading variables.
We put the resulting basis vectors into the rows of a $240 \times 405$ matrix $N_4$ whose row space is $N(4)$.
(A canonical basis for $N(4)$ would consist of the rows of $\mathrm{RCF}(N_4)$.)
Checking the rows of $N_4$ (all identities) one-by-one against the row space of $M_4$ (known identities),
we obtain a list of 40 rows which increase the rank.
Each row contains 6 nonzero entries, representing an identity with 6 terms.
The cosets of these row vectors form a basis of the quotient module $N(4)/M(4)$.

From this linear basis of 40 vectors, we extract a minimal set of $S_4$-module generators.
Checking again the rows $\rho$ of $N_4$ against the row space of $M_4$, but now including all
24 permutations of the identity with coefficient vector $\rho$,
we obtain 5 rows which increase the rank to 204, 216, 222, 232, 240.
We find that the third row belongs to the submodule generated by the others.
The remaining four generators are identities \eqref{JTAnew1}-\eqref{JTAnew4}.
\end{proof}

\begin{remark}
The operation $\circ$ satisfies the Jordan identity \eqref{jordan};
the operation $\bullet$ satisfies the Jordan dialgebra identities \eqref{quasijordan1}-\eqref{quasijordan2};
the operations are related by \eqref{JTAnew1}-\eqref{JTAnew4}.
This pattern of identities is similar to Definition \ref{comtridendef} of Lie trialgebras:
$[a,b]$ defines a Lie algebra, $\{a,b\}$ defines a Leibniz algebra, and other identities relate the two operations.
\end{remark}

\begin{definition}
\label{triJordandef}
Over a field $\mathbb{F}$ of characteristic 0 or $p > 4$, a vector space $J$ with bilinear operations
$\circ$ and $\bullet$ is a \textbf{Jordan trialgebra} if:
\begin{itemize}
\item
$( J, \,\circ\, )$ is a Jordan algebra, and $( J, \,\bullet\, )$ is a Jordan dialgebra, and
\item
the two operations are related by identities \eqref{JTAnew1}--\eqref{JTAnew4} of Proposition \ref{jordantrialgebraproposition}.
\end{itemize}
The corresponding symmetric operad will be denoted $\TriJor$.
\end{definition}


\subsection{Triplicators}
\label{triplicatorsection}

The operad $\TriJor$ governing Jordan trialgebras may also be obtained using the techniques of
Pei et al.~\cite{PBGN}.
We start from the linearized Jordan identity,
$((ab)d)c + ((ac)d)b + ((bc)d)a - (ab)(cd) - (ac)(bd) - (ad)(bc) \equiv 0$,
and its representation in terms of tree monomials, using $\cdot$ for the operation symbol:
\vspace{-3mm}
\begin{figure}[h]
\centering
\includegraphics[height=2.3cm]{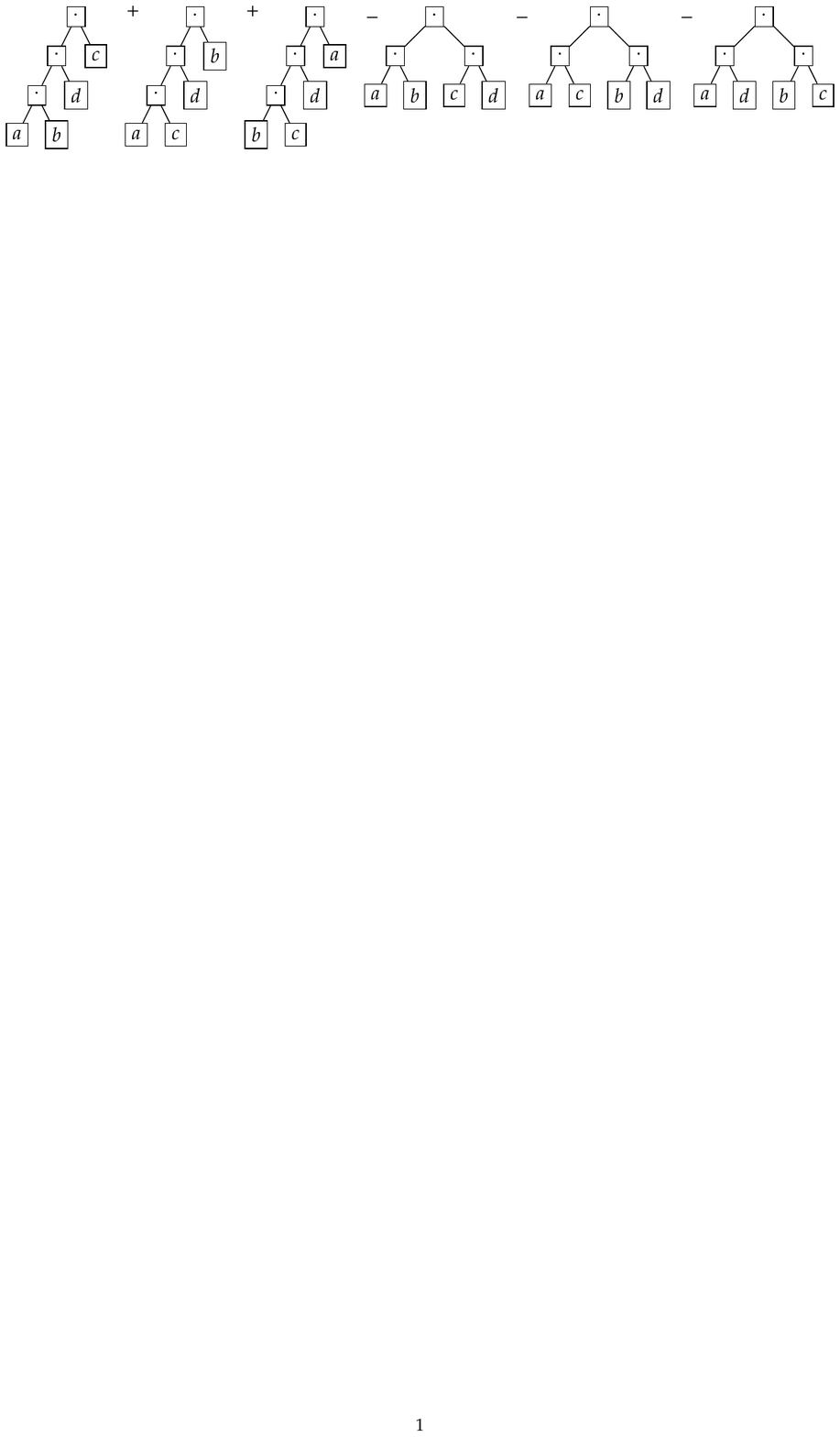}
\vspace{-2mm}
\end{figure}

\vspace{-3mm}

\normalsize

\noindent
Let $L \subseteq \{ a,b,c,d \}$ be a nonempty subset of the arguments.
For each $L$ we construct a new polynomial identity involving operations $\circ_1$, $\circ_2$, $\circ_3$
which we denote simply by 1, 2, 3.
We apply the following triplicator algorithm to each tree monomial $m$ in the identity above.

\begin{algorithm}
\label{tripalgorithm}
For each $x \in L$, let $p_m(x)$ be the unique path from the root of $m$ to the leaf $x$.
Let $W_m$ be the set of internal vertices of $m$, and define
$t_m\colon L \times W_m \to \{ \leftarrow, 0, \rightarrow \}$:
\begin{itemize}
\item
If the internal vertex $v \in W_m$ does not lie on the path $p_m(x)$ then $t_m(x,v) = 0$.
\item
If $v \in W_m$ lies on $p_m(x)$ then $t_m(x,v) = \,\leftarrow$ (resp.~$\rightarrow$) if $p_m(x)$ turns left
(resp.~right) at $v$.
\end{itemize}
There are four possibilities for $T_m(L,v) = \{ \, t_m(x,v) \mid x \in L \, \} \setminus \{ 0 \}$:
\begin{itemize}
\item
If $T_m(L,v) = \{ \leftarrow \}$ then the internal vertex $v$ receives the new operation symbol $1$.
\item
If $T_m(L,v) = \{ \rightarrow \}$ then the internal vertex $v$ receives the new operation symbol $3$.
\item
If $T_m(L,v) = \{ \leftarrow, \rightarrow \}$ then the internal vertex $v$ receives the new operation symbol $2$.
\item
If $T_m(L,v) = \emptyset$ then none of the paths $p_m(x)$ pass through $v$, and in this case $v$ receives
the symbol $\ast$ representing the union of the new operation symbols:
$\ast \,= \{ 1, 2, 3 \}$.
\end{itemize}
\end{algorithm}

Since the linearized Jordan identity is symmetric in $a, b, c$ it suffices to consider only these subsets $L$:
$\{ a \}$, $\{ d \}$, $\{ a,b \}$, $\{ a,d \}$, $\{ a,b,c \}$, $\{ a,b,d \}$, $\{ a,b,c,d \}$.
For example, if $L = \{ a,c \}$ then Algorithm \ref{tripalgorithm} produces this new identity:

\vspace{-3mm}
\begin{figure}[h]
\centering
\includegraphics[scale=1]{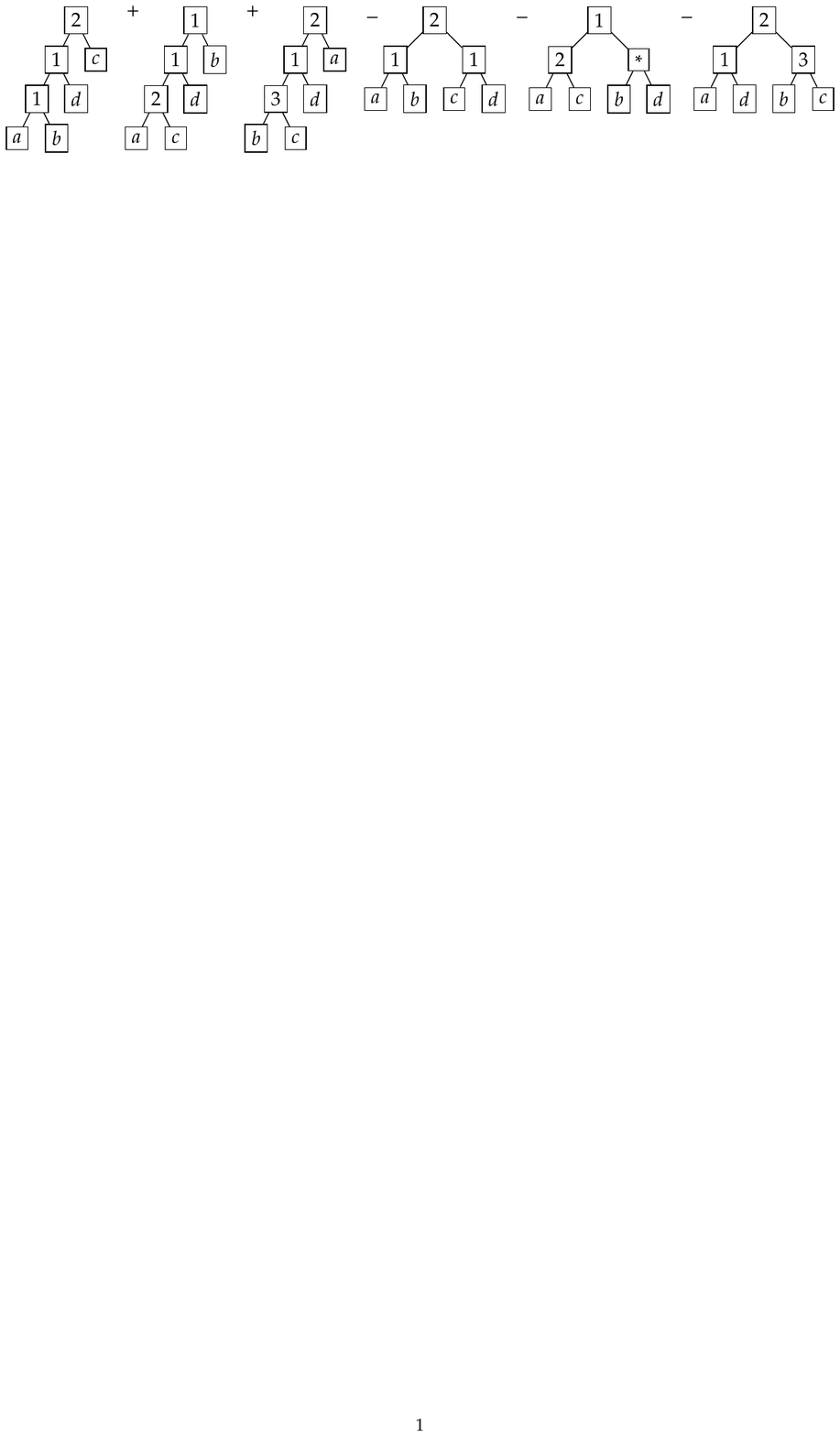}
\vspace{-2mm}
\end{figure}

\normalsize

\noindent
Note that term 5 contains a vertex labelled $\ast$; it thus produces three new identities
after we replace $\ast$ by $1$, then $2$, then $3$.
These new operations satisfy symmetries which follow from the commutativity of the Jordan product:
$a \,\circ_1\, b \equiv b \,\circ_3\, a$ and $a \,\circ_2\, b \equiv b \,\circ_2\, a$.
We replace $\ast$ in term 5 by operation $1$, and apply the symmetries of
the operations to eliminate operation $3$ from terms 3 and 6.
The final result involves only two new operations $1, 2$:

\vspace{-3mm}
\begin{figure}[h]
\centering
\includegraphics[scale=1]{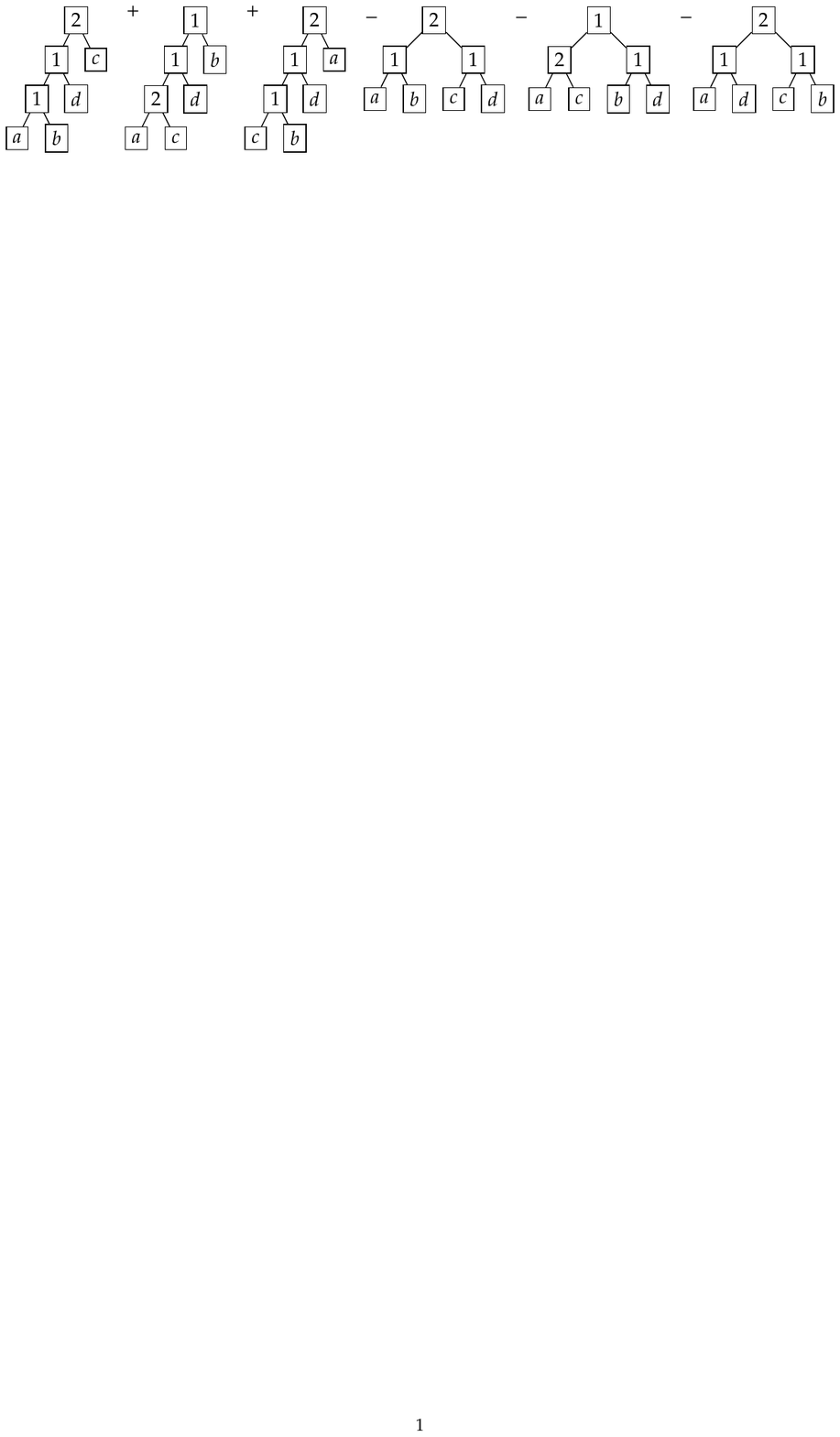}
\vspace{-2mm}
\end{figure}

\normalsize

\noindent
If we replace 2 by $\circ$, and  1 by $\bullet$, and rewrite the tree monomials in parenthesized form,
then we obtain multilinear identities which we can compare directly to those of
Theorem \ref{jordantrialgebraproposition}.

\begin{definition}
The operad defined by the binary operations $\circ$ and $\bullet$, where $\circ$ is commutative and $\bullet$ has no symmetry,
satisfying the multilinear relations resulting from all possible applications of Algorithm \ref{tripalgorithm} to the linearized Jordan identity,
is denoted $\myfont{TripJor}$ and is called the \textbf{triplicator} of the Jordan operad $\Jor$.
\end{definition}

\begin{proposition}
Let $\TriJor$ be the symmetric operad governing Jordan trialgebras,
generated by the operations $\circ$ (commutative) and $\bullet$ (no symmetry)
satisfying the identities of Theorem \ref{jordantrialgebraproposition}.
Let $\myfont{TripJor}$ be the symmetric operad generated by the same operations but
satisfying the identities obtained by applying the triplicator Algorithm \ref{tripalgorithm}
to the linearized Jordan identity.
These two sets of identities generate the same subquotient of the $S_4$-module $\BW(4)$,
and therefore the two operads are isomorphic: $\TriJor \cong \myfont{TripJor}$.
\end{proposition}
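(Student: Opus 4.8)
The plan is to realize both operads as quotients of $\BW$ by operadic ideals and to prove that these ideals coincide, so that $\TriJor$ and $\myfont{TripJor}$ are in fact equal as quotient operads, hence isomorphic. Both are generated by the same pair of operations ($\circ$ commutative, $\bullet$ with no symmetry), so it suffices to compare the defining ideals $I = \langle\,\text{identities of Theorem \ref{jordantrialgebraproposition}}\,\rangle$ and $J = \langle\,\text{triplicator identities}\,\rangle$ inside $\BW$. Every defining relation on either side has arity $\le 4$, so by the description of operad ideals in \eqref{operadcompositions} both ideals are generated in arity $\le 4$; once $I(3)=J(3)$ and $I(4)=J(4)$ as $S_3$- and $S_4$-submodules, the same compositions force $I(n)=J(n)$ for every $n$. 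The whole statement thus reduces to two submodule comparisons, and the phrase ``same subquotient of $\BW(4)$'' refers to the $40$-dimensional quotient $N(4)/M(4)$ isolated in the proof of Theorem \ref{jordantrialgebraproposition}, where $N(4)=\ker E(4)$ and $M(4)$ is generated by the shared Jordan, Jordan-dialgebra, and black right bar relations.

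I would first settle arity $3$. Under the white-product description $\TriJor\cong\ComTriAss\,\circ\,\Jor$ of \S\ref{sectionManin}, the triplicator inherits arity-$3$ relations from its commutative triassociative factor, and I would verify directly that the arity-$3$ component $J(3)$ of the triplicator ideal equals $I(3)=\ker E(3)$, the $6$-dimensional $S_3$-module of permutations of the black right bar identity supplied by Lemma \ref{jordantrialgebralemma}. This is a small rank computation over $\mathbb{F}_p$ with $p>3$.

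The core of the argument is arity $4$, and here I would reuse the data already assembled for Theorem \ref{jordantrialgebraproposition}: the ordered normal-form basis of $\BW(4)$ ($405$ monomials), the matrix of $E(4)$, the nullspace $N(4)$ (dimension $240$), the submodule $M(4)$ (dimension $200$), and the fact that \eqref{JTAnew1}--\eqref{JTAnew4} minimally generate $N(4)/M(4)$. For $\myfont{TripJor}$ I would: (i) run Algorithm \ref{tripalgorithm} on the linearized Jordan identity for each admissible subset $L$, perform every $\ast=\{1,2,3\}$ expansion, and use the symmetries to eliminate operation $3$, producing a finite list of arity-$4$ tree identities in $\circ,\bullet$ only; (ii) rewrite each tree in parenthesized form and, applying the normal-form algorithm to straighten every $\circ$, convert it to a coordinate vector in the $\BW(4)$ basis above; (iii) stack these vectors as rows and generate the resulting $S_4$-submodule $J(4)$ via \eqref{operadcompositions}. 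I would then verify by row reduction the two facts that pin down the subquotient: that every triplicator vector lies in $N(4)$ (so $J(4)\subseteq N(4)$, confirming these are genuine identities of $\circ,\bullet$), and that the images of the triplicator identities span all $40$ dimensions of $N(4)/M(4)$. Since $J(3)$ already contains the black right bar identity its arity-$4$ consequences lie in $J(4)$, while $L=\{a,b,c,d\}$ returns the Jordan identity for $\circ$ and $L=\{a\}$ returns Jordan-dialgebra-type relations for $\bullet$; together these give $M(4)\subseteq J(4)$, whence $J(4)=N(4)=I(4)$.

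The main obstacle is step (ii): translating the tree-monomial output of the triplicator faithfully into the normal-form coordinates of $\BW(4)$. The union labels $\ast$ and, especially, the asymmetry introduced when operation $3$ is rewritten via $a\,\circ_3 b=b\bullet a$ make the bookkeeping error-prone, and the comparison is valid only if every triplicator identity is expressed over precisely the ordered basis used for $E(4)$, after straightening with the commutativity normal-form algorithm. Once the two relation sets live over this common basis the submodule equality is a mechanical (if large) rank computation over $\mathbb{F}_p$; the delicate point is certifying the conversion rather than running the final linear algebra. With $I(3)=J(3)$ and $I(4)=J(4)$ established, the ideals coincide in every arity and $\TriJor\cong\myfont{TripJor}$ follows.
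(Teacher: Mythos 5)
Your arity-4 procedure (run Algorithm \ref{tripalgorithm} for all admissible $L$, expand the $\ast$ labels, eliminate operation $3$ via the symmetries, straighten into the ordered normal-form basis of $\BW(4)$, and compare $S_4$-submodules by row reduction over $\mathbb{F}_p$) is exactly the comparison the paper has in mind: the proposition is stated without a formal proof, on the strength of the remark in \S\ref{triplicatorsection} that the rewritten triplicator identities ``can be compared directly'' to those of Theorem \ref{jordantrialgebraproposition}. So that part of your plan is faithful to the paper.

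The genuine gap is your arity-3 step, and it is not a bookkeeping issue. Every identity produced by Algorithm \ref{tripalgorithm} applied to the linearized Jordan identity is supported on trees with four leaves, and the Jordan operad has no defining relation of arity 3 for the algorithm to act on (commutativity only contributes the arity-2 symmetries $a \circ_1 b \equiv b \circ_3 a$, $a \circ_2 b \equiv b \circ_2 a$, which are absorbed into the presentation of the generators). Since compositions in $\BW$ strictly increase arity, the operadic ideal $J$ generated by the triplicator output satisfies $J(3)=0$, whereas $I(3)$ is the $6$-dimensional module of permutations of the black right bar identity (Lemma \ref{jordantrialgebralemma}). Your justification that ``the triplicator inherits arity-3 relations from its commutative triassociative factor'' conflates two different constructions: the Manin white product $\ComTriAss \circ \Jor$ of \S\ref{sectionManin} does acquire arity-3 relations from the bar identities of $\ComTriAss$ (that is precisely the origin of the black right bar identity), but $\myfont{TripJor}$ as defined in the paper is presented only by the algorithmic output, and the theorem of Pei et al.\ identifying the triplicator with the white product is proved for binary \emph{quadratic} operads, while $\Jor$ is cubic --- which is exactly why the two constructions can disagree in arity 3. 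The error then propagates: your containment $M(4)\subseteq J(4)$ is deduced from ``$J(3)$ already contains the black right bar identity,'' so it is unsupported, and what your linear algebra can actually certify is only $J(4)\subseteq N(4)$ together with $J(4)+M(4)=N(4)$, i.e.\ equality of the images in $\BW(4)/M(4)$. That is the subquotient statement the proposition carefully asserts; upgrading it to equality of ideals (hence to an isomorphism of the quotient operads in every arity, including arity 3) would require either adjoining the black right bar identity to the presentation of $\myfont{TripJor}$ or reinterpreting the triplicator as the white product, and your proof as written would fail at the step ``verify $J(3)=I(3)$.''
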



\subsection{Representation theory of the symmetric group}

We can extend these calculations to arities $n \ge 5$, but the matrices become very large, and so
we use the representation theory of the symmetric group to decompose the $S_n$-modules into
isotypic components.
For the theoretical background see the survey \cite{BMP}, which includes an extensive bibliography.
We mention Rutherford's exposition \cite{Rutherford} of Young's work,
Hentzel's implementation \cite{Hentzel} on a computer,
and the important contributions by his students Bondari \cite{Bondari} and Clifton \cite{Clifton}.

For $n \ge 1$, over a field $\mathbb{F}$ of characteristic 0 or $p > n$, the group algebra $\mathbb{F}S_n$
is semisimple and hence decomposes as the direct sum of simple two-sided ideals isomorphic to full matrix rings.
These ideals are in bijection with the partitions of $n$: for partition $\lambda$ we have the matrix
ring $M_{d_\lambda}(\mathbb{F})$ where the dimension $d_\lambda$ is given by the hook formula.
If $Y(\lambda)$ is the Young diagram for $\lambda$ then $d_\lambda$ is the number of standard tableaux
for $Y(\lambda)$.
For each $\lambda$ the projection $R_\lambda\colon \mathbb{F}S_n \to M_{d_\lambda}(\mathbb{F})$ defines
an irreducible representation of $S_n$; we call $R_\lambda(\pi)$ the representation matrix of $\pi$ for
partition $\lambda$.
If we define the action of $S_n$ on $M_{d_\lambda}(\mathbb{F})$ by $\pi \cdot A = R_\lambda(\pi) A$
for $A \in M_{d_\lambda}(\mathbb{F})$, then $M_{d_\lambda}(\mathbb{F})$ becomes a (left) $S_n$-module,
which is the direct sum of $d_\lambda$ isomorphic minimal left ideals in $\mathbb{F}S_n$:
the column vectors in $M_{d_\lambda}(\mathbb{F})$.
We write $[\lambda]$ for the isomorphism class of these minimal left ideals; $[\lambda]$ is the irreducible
$S_n$-module for partition $\lambda$.
Clifton \cite{Clifton} found an efficient algorithm to compute $R_\lambda(\pi)$.


\subsubsection{All relations: kernel of the expansion map}

We show how to compute the kernel of the expansion map $E(n)$ using representation theory;
recall that $E(n)$ is a linear map from $\BW(n)$ to $\STriAss(n)$.
Let $\BWS(n) = \{ \beta_1, \, \dots, \, \beta_{b(n)} \}$ be the ordered set of skeletons
for $\BW(n)$ (Definition \ref{BWSdefinition}),
and let $\myfont{NTA}(n) = \{ \tau_1, \, \dots, \, \tau_{t(n)} \}$ be the ordered basis
for $\TriAss(n)$ -- the representatives of the equivalence classes in $\BBB(n)$ -- which are
the skeletons for $\STriAss(n)$.
The advantage of representation theory is that we do not need all multilinear monomials,
but only the skeletons: a much smaller set.
For each partition $\lambda$ of $n$ we compute the matrix $E_{n,\lambda}$,
which has $b(n)$ rows and $t(n)+b(n)$ columns of $d_\lambda \times d_\lambda$ blocks.
\begin{equation}
\label{Enlambda}
E_{n,\lambda}
=
\left[
\begin{array}{cccc|cccc}
R_\lambda( X_{1,1} ) & R_\lambda( X_{1,2} ) & \cdots & R_\lambda( X_{1,t(n)} ) &
I_{d_\lambda} & 0 & \cdots & 0
\\[1mm]
R_\lambda( X_{2,1} ) & R_\lambda( X_{2,2} ) & \cdots & R_\lambda( X_{2,t(n)} ) &
0 & I_{d_\lambda} & \cdots & 0
\\[1mm]
\vdots & \vdots & \ddots & \vdots & \vdots & \vdots & \ddots & \vdots
\\[1mm]
R_\lambda( X_{b(n),1} ) & R_\lambda( X_{b(n),2} ) & \cdots & R_\lambda( X_{b(n),t(n)} ) &
0 & 0 & \cdots & I_{d_\lambda}
\end{array}
\right]
\end{equation}
For $1 \le i \le b(n)$ and $1 \le j \le t(n)$, position $(i,j)$ contains the representation matrix
$R_\lambda( X_{i,j} )$ where $X_{i,j} \in \mathbb{F}S_n$ is determined as follows:
\begin{enumerate}[(i)]
\item
Start with the $i$-th skeleton $\beta_i$ from $\myfont{BWS}(n)$.
\item
Replace the $n$ occurrences of the symbol $x$ in $\beta_i$ by the identity permutation
$x_1, \dots, x_n$ of $n$ indeterminates to obtain the multilinear basis monomial $\mu_i$ for $\BW(n)$.
\item
Apply $E(n)$ to $\mu_i$, obtaining $X_i = E(n)(\mu_i) \in \TriAss(n)$.
The terms of the expansion $E(n)(\mu_i)$ belong to the symmetrization of $\BBB(n)$, so we
replace each $\BBB$ skeleton by the representative of its triassociative equivalence class;
see the proofs of Lemma \ref{jordantrialgebralemma} and Proposition \ref{jordantrialgebraproposition}.
After this normalization of the skeletons, $X_i \in \STriAss(n)$.
\item
Decompose $X_i = X_{i,1} + \cdots + X_{i,t(n)}$ as a sum of $t(n)$ components where $X_{i,j}$ consists
of the terms of $X_i$ with triassociative skeleton $\tau_j$.
\item
In $X_{i,j}$ every term has skeleton $\tau_j$; the only differences are in the coefficients and the permutations.
Thus $X_{i,j}$ is a linear combination of permutations of $n$, so $X_{i,j} \in \mathbb{F}S_n$.
\item
Compute $R_\lambda( X_{i,j} )$, the representation matrix of $X_{i,j}$ and store it in block $(i,j)$ of
$E_{n,\lambda}$.
\item
Finally compute the row canonical form of $E_{n,\lambda}$.
\end{enumerate}

\begin{remark}
The skeletons in $\TriAss(n)$ are in bijection with the equivalence classes of skeletons in $\BBB(n)$:
equivalence is determined by the (consequences of the) triassociative relations.
Computing this equivalence relation is only practical in low arities.
In general, we use the explicit description of the free trioid on one generator \cite{LRtrialgebras}.
\end{remark}

\begin{definition}
The \textbf{toprank} of $\mathrm{RCF}(E_{n,\lambda})$ is the number $\mathrm{top}(n,\lambda)$ of rows with leading 1s
to the left of the vertical line in \eqref{Enlambda}.
That is, $\mathrm{top}(n,\lambda)$ is the largest $i$ such that the leading 1 of row $i$ is in column
$j \le t(n) d_\lambda$.
Every row $i' > i$ has only 0s to the left of the vertical line.
The \textbf{allrank} of $\mathrm{RCF}(E_{n,\lambda})$ is the number
$\mathrm{all}(n,\lambda) = b(n) d_\lambda - \mathrm{top}(n,\lambda)$
of rows with leading 1s to the right of the vertical line.
\end{definition}

\begin{lemma}
The last $\mathrm{all}(n,\lambda)$ rows of $\mathrm{RCF}(E_{n,\lambda})$ are independent $S_n$-module generators
for the isotypic component of partition $\lambda$ in the kernel of $E(n)$:
the sum of all irreducible submodules of $\mathrm{kernel}(E(n))$ isomorphic to $[\lambda]$.
\end{lemma}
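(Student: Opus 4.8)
The plan is to reduce the statement to the standard principle that row--reducing an augmented matrix $[A \mid I]$ exposes the left nullspace of $A$, and then to identify that left nullspace with the $\lambda$--isotypic component of $\ker E(n)$. First I would isolate the two structural facts that let the block matrix $A$ formed by the first $t(n)$ block columns of $E_{n,\lambda}$ in \eqref{Enlambda} represent $E(n)$ faithfully at the level of $[\lambda]$. Since $E(n)$ is $S_n$--equivariant and $\mu_i$ is produced from $\beta_i$ by inserting the identity permutation, equivariance gives $E(n)(c\,\mu_i)=c\,X_i$ for all $c\in\mathbb{F}S_n$, where $X_i=\sum_j X_{i,j}\,\tau_j$. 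Because $\STriAss(n)$ is regular, the skeletons $\tau_1,\dots,\tau_{t(n)}$ freely generate $\mathbb{F}S_n$--modules, so for a general $\sum_i c_i\,\mu_i$ one has
\[
E(n)\Big(\textstyle\sum_i c_i\,\mu_i\Big)=\sum_j\Big(\sum_i c_i\,X_{i,j}\Big)\tau_j,
\]
which vanishes if and only if $\sum_i c_i\,X_{i,j}=0$ in $\mathbb{F}S_n$ for every $j$.

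Next I would pass to the isotypic component. Over a field of characteristic $0$ or $p>n$ the algebra $\mathbb{F}S_n$ is semisimple, so $c\mapsto(R_\lambda(c))_\lambda$ is an injective algebra homomorphism onto $\bigoplus_\lambda M_{d_\lambda}(\mathbb{F})$. Applying each $R_\lambda$ to the vanishing condition yields $\sum_i R_\lambda(c_i)\,R_\lambda(X_{i,j})=0$ for all $j$; writing $M=[\,R_\lambda(c_1)\mid\cdots\mid R_\lambda(c_{b(n)})\,]$ and reading $R_\lambda(X_{i,j})$ as the $(i,j)$ block of $A$, this is exactly $MA=0$. Hence the map $M\mapsto MA$ is the $\lambda$--component of $E(n)$, and its kernel is the left nullspace of $A$, carrying the action $M\mapsto R_\lambda(\pi)M$. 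As $M_{d_\lambda}(\mathbb{F})$ under left multiplication is $d_\lambda$ copies of $[\lambda]$ (its columns), the kernel module is $[\lambda]^{\oplus r}$ with $r=\dim(\text{left nullspace of }A)=b(n)d_\lambda-\mathrm{rank}(A)$.

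I would then extract the generators from $\mathrm{RCF}(E_{n,\lambda})$. Because the right block is the identity, the augmented matrix has full row rank $b(n)d_\lambda$, its pivots split into $\mathrm{top}(n,\lambda)$ on the left and $\mathrm{all}(n,\lambda)=b(n)d_\lambda-\mathrm{top}(n,\lambda)$ on the right, and $\mathrm{rank}(A)=\mathrm{top}(n,\lambda)$. A row whose leading $1$ lies to the right of the bar has zero left part, so it records a combination $\alpha$ of the rows of $[A\mid I]$ with $\alpha A=0$; its right part is a left null vector of $A$, and the $\mathrm{all}(n,\lambda)$ such rows form a basis of the left nullspace. Interpreting each as an identity (take the outer product of a fixed coordinate vector with the row as its $\lambda$--datum and zero in the other blocks, legitimate by semisimplicity), the action $M\mapsto R_\lambda(\pi)M$ sweeps it into one copy of $[\lambda]$ inside $\ker E(n)$; linear independence of the rows makes these copies independent and their sum exhausts $[\lambda]^{\oplus\mathrm{all}(n,\lambda)}$, the whole isotypic component.

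The step I expect to demand the most care is the interaction with the commutativity of $\circ$: unlike $\STriAss(n)$, the operad $\BW(n)$ is \emph{not} free on its skeletons, since a symmetric skeleton such as $(x\circ x)\circ(x\circ x)$ contributes fewer than $n!$ monomials, so the coefficients $c_i$ above are not determined by $\sum_i c_i\mu_i$, and the block row $E_{n,\lambda}$ assigns the full $d_\lambda\times d_\lambda$ block to every skeleton. The key observation is that $X_i=E(n)(\mu_i)$ inherits every commutative symmetry of $\beta_i$: if $h$ fixes $\mu_i$ then $h\,X_i=E(n)(h\,\mu_i)=X_i$, so each $X_{i,j}$ is invariant under the stabilizer $H_i$ of $\mu_i$. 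This invariance is exactly what makes $E(n)(c_i\mu_i)=c_iX_i$ well defined on $\mathbb{F}S_n\mu_i\cong\mathbb{F}[S_n/H_i]$, and it is the bookkeeping one must carry through the isotypic decomposition to confirm that the left--nullspace rows present $\ker E(n)_\lambda$ correctly and that the trivial relations coming from these symmetries are separated from the genuine generators; reconciling the full group--algebra blocks of \eqref{Enlambda} with these induced modules is the real content beyond the formal linear algebra.
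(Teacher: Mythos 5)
The paper never actually proves this lemma: it is stated as part of the standard computational machinery, with the representation-theoretic background delegated to the survey \cite{BMP} and to Clifton's algorithm \cite{Clifton}. Your argument supplies exactly the missing standard proof, and its core is correct: writing an element of the $\lambda$-component as a block row $M=[\,R_\lambda(c_1)\mid\cdots\mid R_\lambda(c_{b(n)})\,]$, the expansion map becomes $M\mapsto MA$, where $A$ is the left block of \eqref{Enlambda}; since the right block of $E_{n,\lambda}$ is the identity, the rows of $\mathrm{RCF}(E_{n,\lambda})$ with pivots to the right of the bar form a basis of the left nullspace of $A$, and under the action $M\mapsto R_\lambda(\pi)M$ each such row generates one copy of $[\lambda]$, these copies being independent and exhausting $[\lambda]^{\oplus\mathrm{all}(n,\lambda)}$. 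This matches the paper's conventions (column modules of $M_{d_\lambda}(\mathbb{F})$, left action by $R_\lambda(\pi)$), and your identification $\mathrm{rank}(A)=\mathrm{top}(n,\lambda)$ is the right way to see that the count comes out to $\mathrm{all}(n,\lambda)$.

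On your final paragraph: the difficulty you flag --- that $\BW(n)$ is not free on its skeletons because $\circ$ is commutative --- is real, but in the paper's framework it is not resolved inside this lemma, and you should not try to resolve it here either. The matrix $E_{n,\lambda}$ deliberately assigns a full group-algebra block to every skeleton; that is, it represents the lifted map on the free module $(\mathbb{F}S_n)^{b(n)}$, and ``kernel of $E(n)$'' in the lemma must be read accordingly: this kernel includes the trivial relations $(1-h)\mu_i$ for $h$ in the stabilizer $H_i$, and your observation that $hX_i=X_i$ shows precisely that these lie in the left nullspace of $A$. The paper separates these trivial relations from the genuine identities not in this lemma but in Definition \ref{symcondef}: the $\mathrm{sym}(n)$ skeleton symmetries are counted among the \emph{old} identities, so that the comparison $\mathrm{ALL}(n,\lambda)=\mathrm{OLD}(n,\lambda)$ in Propositions \ref{JTA-arity56} and \ref{PJA-arity56} is a comparison of kernels of the lifted map. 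With that reading, your second and third paragraphs already constitute a complete proof, and the ``bookkeeping'' you defer is not part of this lemma at all. Note that under the strict reading ($\ker E(n)\subseteq\BW(n)$ with monomials in normal form) the lemma's independence claim would in fact fail whenever the symmetry submodule has a nonzero $\lambda$-component, since some of the $\mathrm{all}(n,\lambda)$ rows then project to redundant or zero elements of $\BW(n)$; so identifying the correct (lifted) interpretation is not optional, it is what makes the statement true.
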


\begin{definition}
$\mathrm{ALL}(n,\lambda)$, the \textbf{matrix of all identities} for arity $n$, partition $\lambda$,
is the lower right block of $\mathrm{RCF}(E_{n,\lambda})$, with upper left corner in row $\mathrm{top}(n,\lambda) + 1$
and column $t(n) d_\lambda + 1$.
\end{definition}


\subsubsection{Old relations; symmetries of the skeletons}

For each partition $\lambda$ of $n$, the nullspace of $E_{n,\lambda}$ contains \emph{all} polynomial identities
in the isotypic component for $\lambda$ which are satisfied by the Jordan product $\circ$ and diproduct $\bullet$
in every triassociative algebra.
In general, many of these are consequences of known identities of lower arity.

We must also consider the symmetries of the skeletons in $\BWS(n)$.
If $n$ is small, we can compute a monomial basis and avoid representation theory:
to each skeleton, we apply all permutations of the $n$ variables, and retain a monomial only if
it equals its normal form.
For example, let $n = 3$ and consider $( x \,\circ\, x ) \bullet x$:
commutativity of $\circ$ implies that the six permutations of $a,b,c$ produce only three normal forms:
$( a \,\circ\, b ) \bullet c$, $( a \,\circ\, c ) \bullet b$, $( b \,\circ\, c ) \bullet a$.
If $n$ is large, so that it is not practical to compute a monomial basis, then we use representation theory,
and so we must encode the symmetries in some other way.
For a small example, from the skeleton $( x \circ x ) \bullet x$ we construct a
multilinear identity which can be processed using representation theory:
$( x_1 \circ x_2 ) \bullet x_3 - ( x_2 \circ x_1 ) \bullet x_3 \equiv 0$.

\begin{definition}
We call the identities of this form the \textbf{symmetries} of the skeletons.
In arity $n$, each symmetry has the form $m_1 - m_2 \equiv 0$, where $m_1, m_2$ are monomials of arity $n$
with the same skeleton; $m_1$ has the identity permutation of the variables, and $m_2$ has a permutation of order 2;
and the commutativity of $\circ$ implies that $m_1 = m_2$.
\end{definition}

\begin{lemma}
The relation between multilinear monomials, the skeletons, and their symmetries, is given by this equation,
where $s(\beta)$ is the number of symmetries of skeleton $\beta$:
\[
\dim \BW(n) = \sum_{\beta \,\in\, \BWS(n)} \frac{n!}{2^{s(\beta)}}
\]
\end{lemma}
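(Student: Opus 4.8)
The plan is to count, for each skeleton $\beta \in \BWS(n)$, how many distinct multilinear monomials have that skeleton, and then to sum over all of $\BWS(n)$. A multilinear monomial with skeleton $\beta$ is obtained by labeling the $n$ leaves of $\beta$ bijectively with $x_1, \dots, x_n$, and there are $n!$ such labelings. First I would observe that two labelings of $\beta$ yield the same monomial in $\BW(n)$ if and only if they differ by a shape-preserving automorphism of $\beta$, that is, a composition of swaps of the two children at those $\circ$-nodes whose left and right subtrees are identical skeletons; these symmetry nodes are exactly the $s(\beta)$ sub-skeletons of the form $f \circ f$. (A swap at a $\bullet$-node, or at a $\circ$-node with distinct subtrees, alters the shape and so does not return another labeling of $\beta$.) Writing $\mathrm{Aut}(\beta)$ for the group of such shape-preserving automorphisms, the monomials with skeleton $\beta$ are precisely the orbits of $\mathrm{Aut}(\beta)$ acting on the $n!$ labelings.

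Next I would check that this action is free. A labeling is an injection from the leaves of $\beta$ into $\{x_1, \dots, x_n\}$, so any automorphism fixing a labeling must fix every leaf; since a binary-tree automorphism is determined by its action on leaves---each internal vertex is recovered as the unique vertex whose set of descendant leaves is prescribed---such an automorphism is the identity. Hence every stabilizer is trivial, and by the orbit--stabilizer theorem the number of monomials with skeleton $\beta$ is exactly $n! / |\mathrm{Aut}(\beta)|$.

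The crux is then to prove $|\mathrm{Aut}(\beta)| = 2^{s(\beta)}$ by induction on the arity of $\beta$. The base case is the leaf $x$, with trivial automorphism group and $s = 0$. For the inductive step I would write $\beta = f \ast g$. If $\ast$ is $\bullet$, or if $\ast$ is $\circ$ but $f$ and $g$ are distinct skeletons, then no swap is available at the root, so $\mathrm{Aut}(\beta) \cong \mathrm{Aut}(f) \times \mathrm{Aut}(g)$ and $s(\beta) = s(f) + s(g)$, whence $|\mathrm{Aut}(\beta)| = 2^{s(f)} \cdot 2^{s(g)} = 2^{s(\beta)}$. The remaining case is a symmetry node, $\ast$ being $\circ$ with $f = g$: here an automorphism either preserves or interchanges the two identical subtrees, so $\mathrm{Aut}(\beta) \cong \mathrm{Aut}(f) \wr \mathbb{Z}/2 = \big( \mathrm{Aut}(f) \times \mathrm{Aut}(f) \big) \rtimes \mathbb{Z}/2$ has order $2\,|\mathrm{Aut}(f)|^2$, while $s(\beta) = 2\,s(f) + 1$; the inductive hypothesis then gives $|\mathrm{Aut}(\beta)| = 2 \cdot (2^{s(f)})^2 = 2^{2 s(f) + 1} = 2^{s(\beta)}$. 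I expect this wreath-product step to be the main obstacle, because nested symmetries make $\mathrm{Aut}(\beta)$ genuinely non-abelian---for instance $(x \circ x) \circ (x \circ x)$ has three symmetry nodes and an automorphism group of order $8$ that is not elementary abelian---so the point is not that the $s(\beta)$ generating involutions are independent, but that the order-doubling recursion $|\mathrm{Aut}(f \circ f)| = 2\,|\mathrm{Aut}(f)|^2$ exactly matches the exponent recursion $s(f \circ f) = 2 s(f) + 1$.

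Finally I would sum over all skeletons. Each multilinear monomial of arity $n$ has a unique skeleton, so the monomial basis of $\BW(n)$ partitions according to $\beta \in \BWS(n)$, and therefore
\[
\dim \BW(n) \;=\; \sum_{\beta \,\in\, \BWS(n)} \frac{n!}{|\mathrm{Aut}(\beta)|} \;=\; \sum_{\beta \,\in\, \BWS(n)} \frac{n!}{2^{s(\beta)}},
\]
as claimed.
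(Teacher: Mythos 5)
Your proof is correct, and it supplies considerably more than the paper does: the paper states this lemma with no proof at all, having earlier asserted the count $n!/2^s$ inside the proof of Theorem \ref{jordantrialgebraproposition} (when constructing the monomial basis of $\BW(4)$) and justified it only by the example $(x \circ x) \circ (x \circ x)$, which has $3$ symmetries and $4!/2^3 = 3$ multilinear monomials. Your route --- identify the monomials with skeleton $\beta$ with the orbits of the shape-preserving automorphism group $\mathrm{Aut}(\beta)$ acting on the $n!$ leaf-labelings, verify that the action is free, and prove $|\mathrm{Aut}(\beta)| = 2^{s(\beta)}$ by induction on the structure of $\beta$ --- is exactly the argument the paper leaves implicit, and it isolates the genuine subtlety: the formula does \emph{not} hold because the $s(\beta)$ swaps generate an elementary abelian $2$-group (they fail to commute when symmetry nodes are nested, e.g.\ $\mathrm{Aut}\bigl((x\circ x)\circ(x\circ x)\bigr)$ is dihedral of order $8$), but because the wreath-product recursion $|\mathrm{Aut}(f \circ f)| = 2\,|\mathrm{Aut}(f)|^2$ matches the symmetry-count recursion $s(f \circ f) = 2s(f)+1$, while freeness of the action (an automorphism fixing every leaf fixes every vertex, since a vertex is determined by its descendant leaves) converts the group order into a uniform orbit size, so no Burnside-type averaging is needed. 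One small imprecision in your parenthetical: a swap at a $\bullet$-node whose two children are identical skeletons does not ``alter the shape''; the correct reason such a swap contributes no identification is that $\bullet$ is not commutative, so the swap is not an allowed rewriting in $\BW$ at all. This does not affect your argument, since the equivalence on labelings is generated by commutativity of $\circ$ only, and among those moves precisely the swaps at symmetry nodes preserve the planar shape $\beta$.
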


\begin{definition}
\label{symcondef}
Recall that $b(n)$ is the size of $\BWS(n)$: the total number of $\BW$ skeletons in arity $n$.
Let $\mathrm{sym}(n)$ be the total number of symmetries over all skeletons in arity $n$.
Let $\mathrm{con}(n)$ be the total number of consequences in arity $n$ of the known identities of lower arities.
Let $K_{n,\lambda}$ be the matrix of size $( \mathrm{sym}(n) {+} \mathrm{con}(n) ) d_\lambda \times b(n) d_\lambda$,
consisting of $d_\lambda \times d_\lambda$ blocks; the block in position $(i,j)$ is $R_\lambda( Y_{i,j} )$
where $Y_{i,j}$ is the component of symmetry $i$ in skeleton $j$ for $i = 1, \dots, \mathrm{sym}(n)$, and
$Y_{\mathrm{sym}(n)+i,j}$ is the component of consequence $i$ in skeleton $j$ for $i = 1, \dots, \mathrm{con}(n)$.
The \textbf{matrix of old identities} for arity $n$ and paritition $\lambda$ is defined by
$\mathrm{OLD}(n,\lambda) = \mathrm{RCF}(K_{n,\lambda})$.
\end{definition}


\subsection{Relations of arity $\ge 5$: a result, a problem, and two conjectures}

\begin{proposition}
\label{JTA-arity56}
Every multilinear polynomial identity of arity $\le 6$ relating the Jordan product and diproduct
in every triassociative algebra is a consequence of the defining identities for Jordan trialgebras
(Definition \ref{triJordandef}).
\end{proposition}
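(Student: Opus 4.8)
The plan is to compute, for each arity $n = 5, 6$ and each partition $\lambda$ of $n$, the isotypic components of both the full kernel of the expansion map $E(n)$ and the submodule of known consequences, and to verify that they coincide. Concretely, for fixed $n$ and $\lambda$ I would build the matrix $E_{n,\lambda}$ of \eqref{Enlambda} by running through the skeletons $\beta_i \in \BWS(n)$ (Definition \ref{BWSdefinition}), substituting the identity permutation to form the monomial $\mu_i$, applying $E(n)$, normalizing each $\STriAss$ skeleton to its equivalence-class representative, splitting into the blocks $X_{i,j} \in \mathbb{F}S_n$, and recording the representation matrices $R_\lambda(X_{i,j})$ using Clifton's algorithm. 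The bottleneck of working with the full multilinear spaces (dimensions growing like $n!$) is avoided precisely because representation theory lets us work one partition at a time with $d_\lambda \times d_\lambda$ blocks. From $\mathrm{RCF}(E_{n,\lambda})$ I extract $\mathrm{ALL}(n,\lambda)$, the matrix of all identities in the $\lambda$-isotypic component of $\ker E(n)$, with row count equal to the allrank $\mathrm{all}(n,\lambda)$.

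Next I would assemble the matrix of old identities $\mathrm{OLD}(n,\lambda) = \mathrm{RCF}(K_{n,\lambda})$ following Definition \ref{symcondef}. Here the rows of $K_{n,\lambda}$ encode two kinds of data: the symmetries of the $\BW$ skeletons (needed because in a symmetric operad the commutativity of $\circ$ is not built into the monomials), and the consequences in arity $n$ of the defining identities of Jordan trialgebras (Definition \ref{triJordandef})---the commutativity of $\circ$, the black right bar identity, the Jordan identity \eqref{jordan}, the Jordan dialgebra identities \eqref{quasijordan1}--\eqref{quasijordan2}, and the four relating identities \eqref{JTAnew1}--\eqref{JTAnew4}. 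These consequences are generated by the composition formulas \eqref{operadcompositions}, lifting each lower-arity generator to arity $n$ in all admissible ways. The decisive comparison is then a rank check: there are no new identities in arity $n$ for partition $\lambda$ if and only if $\mathrm{OLD}(n,\lambda)$ and $\mathrm{ALL}(n,\lambda)$ have the same row space, equivalently the same rank, since the old identities are known to lie inside $\ker E(n)$ and hence their row space is contained in that of $\mathrm{ALL}(n,\lambda)$. Running this for every partition of $5$ and then every partition of $6$, and confirming equality in each case, proves the claim for that arity.

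The main obstacle is computational rather than conceptual, and it is concentrated in arity $6$. The number of $\BW$ skeletons $b(6)$ and the number of triassociative skeletons $t(6)$ are both substantial, so the blocks $E_{6,\lambda}$ are large even after the representation-theoretic reduction, particularly for the self-conjugate and near-rectangular partitions where $d_\lambda$ is biggest. Two subsidiary difficulties feed into this. First, I need a correct and efficient normal form for skeletons in $\STriAss(6)$: in low arity one computes the triassociative equivalence relation on $\BBB(6)$ directly, but at this size it is more reliable to use the explicit description of the free trioid on one generator from \cite{LRtrialgebras}, as noted in the remark following the toprank definition. Second, generating a \emph{complete} list of arity-$6$ consequences of the arity-$\le 4$ generators---without omissions and without spurious dependencies inflating $\mathrm{con}(6)$---requires care in applying \eqref{operadcompositions} iteratively from arity $4$ to $5$ to $6$ and tracking the skeleton symmetries at each stage. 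To keep the arithmetic tractable I would work over a prime field $\mathbb{F}_p$ with $p > 6$ so that $\mathbb{F}_p S_6$ is semisimple and the matrix-unit denominators dividing $6!$ remain invertible.

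Finally, I would record the arity-by-arity and partition-by-partition rank data in a table, so that the equality $\mathrm{rank}(\mathrm{OLD}(n,\lambda)) = \mathrm{rank}(\mathrm{ALL}(n,\lambda))$ is exhibited for all $\lambda \vdash n$ with $n \in \{5,6\}$; this tabulated agreement is exactly the content of the proposition. As a consistency check I would confirm that summing $\mathrm{all}(n,\lambda) \cdot d_\lambda$ over all $\lambda$ reproduces the total dimension of $\ker E(n)$ that one could (in principle) obtain from the full multilinear matrix, and likewise for the old identities, ensuring that the decomposition into isotypic components has not lost or double-counted any relations.
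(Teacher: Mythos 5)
Your proposal is correct and follows essentially the same route as the paper: the paper's proof of Proposition \ref{JTA-arity56} is precisely the computation you describe, namely building $E_{n,\lambda}$ and $K_{n,\lambda}$ for every partition $\lambda$ of $n = 5, 6$, extracting $\mathrm{ALL}(n,\lambda)$ and $\mathrm{OLD}(n,\lambda)$ in row canonical form, and verifying they agree in every case. Your additional remarks (rank equality suffices since the old identities lie in the kernel, use of the free trioid for normal forms, the dimension-count consistency check) are all consistent with the machinery the paper sets up in the preceding subsections.
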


\begin{proof}
For $n = 5, 6$ and all partitions $\lambda$ of $n$, we computed $\mathrm{ALL}(n,\lambda)$
and $\mathrm{OLD}(n,\lambda)$, which are both in row canonical form, and found that they were equal in every case.
\end{proof}

\begin{problem}
Determine whether or not special identities of arity 8 exist for the Jordan diproduct in the free
diassociative algebra; such identities (if they exist) are called special identities for Jordan dialgebras.
The existence of such identities for the pre-Jordan product in the free dendriform algebra has been
established \cite{BM}.
\end{problem}

\begin{conjecture}
Over a field $\mathbb{F}$ of characteristic 0 or $p > 7$, every multilinear polynomial identity of
arity $\le 7$ satisfied by the Jordan product and diproduct in every triassociative algebra is a
consequence of the defining identities for Jordan trialgebras.
\end{conjecture}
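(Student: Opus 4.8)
The plan is to extend the representation-theoretic computation that established Proposition~\ref{JTA-arity56} from arity $6$ to arity $7$, following verbatim the framework of equation~\eqref{Enlambda} and Definition~\ref{symcondef}. Since $7 < p$ (or $\mathrm{char}\,\mathbb{F} = 0$), the group algebra $\mathbb{F}S_7$ is semisimple, so it suffices to work one isotypic component at a time: the conjecture holds if and only if $\mathrm{ALL}(7,\lambda) = \mathrm{OLD}(7,\lambda)$ for every partition $\lambda$ of $7$. First I would enumerate the $15$ partitions of $7$, compute each dimension $d_\lambda$ by the hook length formula, and use Clifton's algorithm~\cite{Clifton} to generate the representation matrices $R_\lambda(\pi)$ needed to fill the blocks of $E_{7,\lambda}$ and $K_{7,\lambda}$.

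Second, I would assemble the two combinatorial inputs. On the domain side, I would enumerate the skeleton set $\BWS(7)$ using the total order of Definition~\ref{BWorderdefinition}, recording for each skeleton its symmetries (sub-skeletons of the form $f \circ f$) so as to build the symmetry rows of $K_{7,\lambda}$. On the codomain side, I would produce the ordered basis $\myfont{NTA}(7)$ of $\TriAss(7)$; because computing the triassociative equivalence relation on $\BBB(7)$ directly is impractical at this arity, I would instead use the explicit description of the free trioid on one generator~\cite{LRtrialgebras} to list the normal forms.

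Third, for each $\lambda$ I would form $E_{7,\lambda}$ block-by-block: start from the $i$-th skeleton of $\BWS(7)$, apply $E(7)$ to its multilinear realization, normalize each term to its $\TriAss$ skeleton, split the result by codomain skeleton into components $X_{i,j} \in \mathbb{F}S_7$, and store $R_\lambda(X_{i,j})$. Computing $\mathrm{RCF}(E_{7,\lambda})$ then yields $\mathrm{ALL}(7,\lambda)$. Independently, I would build $K_{7,\lambda}$ from the skeleton symmetries together with the consequences in arity $7$ of the defining identities of Definition~\ref{triJordandef}, generated by iterating the compositions~\eqref{operadcompositions}, and set $\mathrm{OLD}(7,\lambda) = \mathrm{RCF}(K_{7,\lambda})$. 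The conjecture is confirmed on component $\lambda$ exactly when these two row-canonical forms coincide.

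The hard part will be purely computational scale rather than any conceptual difficulty. For $S_7$ the largest irreducible dimensions reach $d_\lambda = 35$, and $b(7)$ and $t(7)$ are both substantially larger than their arity-$6$ counterparts, so the matrices $E_{7,\lambda}$ have on the order of $b(7)\,d_\lambda$ rows and $(t(7)+b(7))\,d_\lambda$ columns; storing them and computing their row canonical forms over $\mathbb{F}_p$ will strain memory and time precisely for the mid-sized partitions where $d_\lambda$ is largest. I would mitigate this by processing each $\lambda$ separately and discarding intermediate data between components, and---as in the earlier arities---by working over a prime field $\mathbb{F}_p$ with $p > 7$ so that all matrix-unit denominators dividing $7!$ remain invertible. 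Assuming the computation completes, I expect $\mathrm{ALL}(7,\lambda) = \mathrm{OLD}(7,\lambda)$ in every case, confirming that no new identities appear before the conjectured arity-$8$ analogues of the Glennie identities.
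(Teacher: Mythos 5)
There is a genuine gap, and it is fundamental: the statement you are addressing is a \emph{conjecture} in the paper, not a proposition, and your proposal does not close that gap --- it merely restates the computation that would need to be carried out. Your plan is methodologically sound and matches exactly what the authors did for arities $5$ and $6$ (build $E_{7,\lambda}$ and $K_{7,\lambda}$ partition by partition, compute row canonical forms, compare $\mathrm{ALL}(7,\lambda)$ with $\mathrm{OLD}(7,\lambda)$); your combinatorial details are also accurate (there are $15$ partitions of $7$, the largest $d_\lambda$ is $35$, semisimplicity of $\mathbb{F}_p S_7$ needs $p>7$, and the free trioid description from \cite{LRtrialgebras} is indeed the practical route to the basis of $\TriAss(7)$). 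But a proof must establish the conclusion, and your argument ends with ``assuming the computation completes, I expect $\mathrm{ALL}(7,\lambda)=\mathrm{OLD}(7,\lambda)$ in every case.'' That expectation \emph{is} the conjecture; nothing in your write-up gives a reason, computational or theoretical, why the two row canonical forms must coincide. The inclusion $\mathrm{rowspace}(\mathrm{OLD}) \subseteq \mathrm{rowspace}(\mathrm{ALL})$ is automatic, but the reverse inclusion is precisely the open question, and the analogy with Jordan algebras (where the first special identities occur in arity $8$, by Glennie) is suggestive but not probative --- indeed the paper's companion conjecture explicitly anticipates that arity $8$ behaves differently, which shows the authors regard the arity-$7$ case as genuinely undecided rather than routine.

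To convert your proposal into an actual proof you would have to execute the computation and report the ranks: for each of the $15$ partitions $\lambda$, exhibit (or at least certify) that the two matrices in row canonical form agree. Absent that, what you have written is a correct and well-informed research plan --- essentially the plan the authors themselves imply by stating the conjecture --- but not a proof of anything. A secondary caution: at arity $7$ the matrix $E_{7,\lambda}$ has $b(7)\,d_\lambda$ rows with $b(7)=2736$ (from sequence A276277 quoted in the paper) and $d_\lambda$ up to $35$, i.e.\ on the order of $10^5$ rows, so even the feasibility claim in your last paragraph deserves more than the one sentence you give it; if feasibility were straightforward, the paper would contain a proposition rather than a conjecture here.
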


\begin{conjecture}
There exist identities of arity 8 the Jordan product and diproduct in every triassociative algebra
which are not consequences of the identities defining Jordan trialgebras, the (linearizations of) the
Glennie identities for Jordan algebras \cite{Glennie}, and the multilinear special identities of
arity 8 for Jordan dialgebras (if they exist).
\end{conjecture}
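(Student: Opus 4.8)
The plan is to extend the representation-theoretic computation behind Proposition~\ref{JTA-arity56} from arity $\le 6$ to arity $8$, and to exhibit at least one partition $\lambda$ of $8$ for which the space of all identities strictly exceeds the span of the consequences of the stipulated known identities. Concretely, for each partition $\lambda$ of $8$ I would first assemble the block matrix $E_{8,\lambda}$ of \eqref{Enlambda} from the expansion map $E(8)\colon \BW(8) \to \STriAss(8)$, compute its row canonical form, and extract $\mathrm{ALL}(8,\lambda)$, whose rows generate the full isotypic component of $[\lambda]$ inside $\mathrm{kernel}(E(8))$. This requires only the skeleton set $\BWS(8)$, a basis of $\TriAss(8)$ (obtained from the free trioid on one generator \cite{LRtrialgebras}), and Clifton's algorithm for the matrices $R_\lambda(\pi)$; all of this machinery is already in place, and only the arity has changed.

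The delicate part is the construction of $\mathrm{OLD}(8,\lambda)$, since the list of ``known'' identities named in the statement is itself only partly determined. Following Definition~\ref{symcondef}, I would build $K_{8,\lambda}$ from the symmetries of the skeletons together with three families of arity-$8$ consequences: (i) the consequences of the Jordan trialgebra identities of Definition~\ref{triJordandef}; (ii) the consequences of the linearized Glennie identities \cite{Glennie} applied to the Jordan product $\circ = a \perp b + b \perp a$; and (iii) the special Jordan dialgebra identities applied to the diproduct $\bullet = a \dashv b + b \vdash a$. Family (iii) presupposes the resolution of the preceding Problem: one must first determine these special identities by computing the kernel of the diproduct expansion into $\SDiAss(8)$, in exact analogy with the pre-Jordan case settled in \cite{BM}, and then feed its generators into $K_{8,\lambda}$. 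Setting $\mathrm{OLD}(8,\lambda) = \mathrm{RCF}(K_{8,\lambda})$, the conjecture holds precisely when $\mathrm{rank}\,\mathrm{ALL}(8,\lambda) > \mathrm{rank}\,\mathrm{OLD}(8,\lambda)$ for some $\lambda$, and any row of $\mathrm{ALL}(8,\lambda)$ that raises the rank of $\mathrm{OLD}(8,\lambda)$ is then a witnessing new identity.

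The main obstacle is twofold. Computationally, arity $8$ is a genuine jump: the toprank/allrank splitting of $E_{8,\lambda}$ must be computed for every partition of $8$, and the block size $d_\lambda$ grows quickly, so even the blockwise row reduction over a prime field $\mathbb{F}_p$ with $p > 8$ is demanding in both time and memory. Logically, the statement is entangled with the open Problem on special dialgebra identities: until family (iii) is known one can only prove a conditional version, or else one must argue that the witnessing identities are genuinely mixed --- involving both $\circ$ and $\bullet$ nontrivially --- so that they cannot lie in the purely Jordan or purely dialgebra subquotients and their independence from families (ii) and (iii) is automatic. I expect the latter to be the cleaner route: locate a partition $\lambda$ for which the new identities are supported on skeletons using both operations, which removes the need to know the complete Glennie and special-dialgebra data for that $\lambda$.
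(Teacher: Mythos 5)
There is a fundamental gap here: the statement you are addressing is a \emph{conjecture}, and the paper offers no proof of it --- it is stated precisely because the computation that would settle it has not been (and at present cannot feasibly be) carried out. Your submission is likewise not a proof but a proof \emph{program}: you correctly identify that the conjecture would follow from exhibiting a partition $\lambda$ of $8$ with $\mathrm{rank}\,\mathrm{ALL}(8,\lambda) > \mathrm{rank}\,\mathrm{OLD}(8,\lambda)$, using the machinery of \eqref{Enlambda} and Definition~\ref{symcondef} exactly as in Proposition~\ref{JTA-arity56}, but you never perform the computation, and nothing in your argument rules out the opposite outcome (equality of ranks for every $\lambda$, which would \emph{refute} the conjecture). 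Reducing an existence claim to an undone rank computation whose answer is unknown establishes nothing; this is the same position the authors are in, which is why the statement remains a conjecture. You also make the proof conditional on the open Problem of whether special Jordan dialgebra identities of arity 8 exist at all, which compounds rather than resolves the difficulty.

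Your proposed escape from that conditionality --- arguing that a ``genuinely mixed'' witness is automatically independent of families (ii) and (iii) --- is too loose as stated. What is true is this: at arity 8 the linearized Glennie identities involve only the operation $\circ$ and any special dialgebra identities involve only $\bullet$, so their $S_8$-modules are supported on pure skeletons; hence if the component of a candidate identity on \emph{mixed} skeletons (those using both operations) lies outside the span of the mixed components of the arity-8 consequences of the Jordan trialgebra identities, then the candidate is independent of all three families simultaneously. That projection argument, not the mere fact of being ``mixed,'' is what removes the dependence on the unknown family (iii); an identity with mixed support can perfectly well be a sum of a Jordan-trialgebra consequence and elements of families (ii) and (iii). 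But even with this repair, the existence of such a witness is exactly what the unperformed arity-8 computation would have to produce, so the gap between your proposal and a proof remains the entire content of the conjecture.
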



\section{Enumeration of association types and skeletons}

We consider two integer sequences:
the number of skeletons of arity $n$ in the symmetric operad $\BW$ generated
by two binary operations, one commutative, one noncommutative;
the number of multilinear monomials of arity $n$ in $\BW$, which is $\dim\BW(n)$.


\subsection{One commutative operation}

The number of association types is sequence A001190 in the OEIS (\url{oeis.org}),
the Wedderburn-Etherington numbers \cite{Etherington,Wedderburn}:
\[
1, \, 1, \, 1, \, 2, \, 3, \, 6, \, 11, \, 23, \, 46, \, 98, \, 207, \, 451, \, 983, \, 2179, \, 4850, \,
10905, \, 24631, \, 56011, \, \dots
\]
This sequence also enumerates complete rooted binary trees with $n$ leaves, up to abstract graph isomorphism,
so that (for example) the two trees with three (unlabelled) leaves are isomorphic.
The corresponding number of multilinear monomials is A001147, the double factorial of odd numbers:
$(2n{-}1)!! = 1 \cdot 3 \cdot 5 \cdots (2n{-}1)$.
This is also the number of complete rooted binary trees with leaf labels $1,\dots,n$ up to abstract isomorphism.


\subsection{One noncommutative operation}

The number of association types is the (shifted) Catalan number $C(n{-}1)$ where $C(n) = (2n)!/(n!(n{+}1)!)$.
(The shift is required since $n$ in $C(n)$ is the number of operations but for us $n$ is the number of arguments.)
This sequence A000108 also enumerates complete rooted binary \emph{plane} trees with $n$ leaves,
so that the two trees with three (unlabelled) leaves are \emph{not} isomorphic.
In this case, the operation has no symmetry, so the number of multilinear monomials is simply
$n!C(n{-}1) = (2n{-}2)!/(n{-}1)!$,
which is the quadruple factorial $(4n{-}2)!!!! = 2 \cdot 6 \cdot 10 \cdots (4n-2)$.


\subsection{Two operations: one commutative, one noncommutative}

Computational enumeration of the skeletons produced the following sequence, A276277 in the OEIS:
\[
1, \, 2, \, 6, \, 25, \, 111, \, 540, \, 2736, \, 14396, \, 77649, \, 427608, \, 2392866, \, 13570386, \, 77815161, \, \dots
\]
The number of multilinear monomials is the sextuple factorial, sequence A011781:
\[
(6n{-}3)!!!!!! = \prod_{k=1}^{n-1} (6k-3) \implies
1, \, 3, \, 27, \, 405, \, 8505, \,  229635, \, 7577955, \, 295540245, \, \dots
\]

\begin{problem}
For $p, q \ge 1$ let $\BW_{p,q}$ be the free symmetric operad generated by $p$ commutative
and $q$ noncommutative binary operations.
For $n \ge 1$ determine the number of skeletons and the number of multilinear monomials in $\BW_{p,q}(n)$.
\end{problem}


\section{Post-Jordan algebras}
\label{postjordanalgebrasection}

In this section we determine the multilinear polynomial identities of arity $\le 6$ satisfied by
the Jordan and pre-Jordan products
$a \,\circ\, b = a \curlywedge b + b \curlywedge a$ and $a \bullet b = a \prec b  + b \succ a$
in the free tridendriform algebra.
In addition to the commutativity of $\circ$, there are no new identities in arity 3, a nonzero $S_4$-module of new identities in arity 4 for which we find a minimal set of generators, and no new identities in arities 5 and 6.
The commutativity of $\circ$ together with the new identities in arity 4 define post-Jordan algebras.


\subsection{Gr\"obner-Shirshov basis for the tridendriform operad}

The computations for post-Jordan algebras are very similar to those for Jordan trialgebras; in particular,
the domain $\BW(n)$ of the expansion map $E(n)$ is the same.
However, the codomain of the expansion map is no longer $\STriAss$ but its Koszul dual
$\STriDend \cong \STriAss^!$.
The defining relations for $\TriAss$ are monomial relations (Definition \ref{triassdef}):
this allows us to compute normal forms in $\TriAss$ very simply in terms of equivalence relations
on $\BBB$ monomials.
On the other hand, not all the defining relations for $\TriDend$ are monomial relations
(Definition \ref{triassdef}).
Hence computing normal forms in $\TriDend$ requires first determining a Gr\"obner-Shirshov (GS) basis
from the defining relations, and this requires fixing a total order on the operations.
Checking all six possibilities, we find that the smallest GS basis for $\TriDend$
comes from the order $\curlywedge, \prec, \succ$ where the commutative operation comes first.
This GS basis for $\TriDend$ is almost the same as the original defining relations,
except that the relations appear in a different order, as do the terms within each relation.

\begin{lemma}
\label{TDGSlemma}
Starting with the relations in Definition \ref{triassdef},
and assuming the order of operations $\curlywedge, \prec, \succ$,
we obtain the following Gr\"obner-Shirshov basis for $\TriDend$:
\[
\begin{array}{lll}
\multicolumn{2}{l}{
  ( a \prec b ) \prec c
- a \prec ( b \succ c )
- a \prec ( b \prec c )
- a \prec ( b \curlywedge c ),
}
&\quad
  ( a \succ b ) \prec c
- a \succ ( b \prec c ),
\\
\multicolumn{2}{l}{
  ( a \succ b ) \succ c
+ ( a \prec b ) \succ c
+ ( a \curlywedge b ) \succ c
- a \succ ( b \succ c ),
}
&\quad
  ( a \succ b ) \curlywedge c
- a \succ ( b \curlywedge c ),
\\
  ( a \prec b ) \curlywedge c
- a \curlywedge ( b \succ c ),
&\quad
  ( a \curlywedge b ) \prec c
- a \curlywedge ( b \prec c ),
&\quad
  ( a \curlywedge b ) \curlywedge c
- a \curlywedge ( b \curlywedge c ) .
\end{array}
\]
\end{lemma}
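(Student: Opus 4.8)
The plan is to verify the operadic composition--diamond (Gröbner--Shirshov) criterion directly for the nonsymmetric operad $\TriDend$, whose monomials are planar binary trees with internal vertices labelled by $\prec,\curlywedge,\succ$. First I would fix an admissible monomial order refining the operation order $\curlywedge\prec\succ$ and ordering the two arity-3 association types so that the left comb $(x\ast x)\ast x$ exceeds the right comb $x\ast(x\ast x)$; well-foundedness and compatibility with the substitution maps $\ast_i$ of Definition \ref{composeskeletons} are the standard requirements. Under such an order each of the seven listed elements is already written as (leading monomial) minus (lower terms): the leading monomials are exactly the seven left combs whose (inner, outer) operation pairs are $(\prec,\prec)$, $(\succ,\prec)$, $(\succ,\succ)$, $(\succ,\curlywedge)$, $(\prec,\curlywedge)$, $(\curlywedge,\prec)$, $(\curlywedge,\curlywedge)$. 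One then checks that every non-leading term occurring in any of the seven elements is either a right comb or one of the two residual left combs $(\prec,\succ)$, $(\curlywedge,\succ)$, all of which are irreducible. Thus the seven elements have pairwise distinct leading monomials and are already self-reduced, so they form a reduced candidate basis, leaving exactly the $11=18-7$ normal monomials in arity $3$, in agreement with $\dim\TriDend(3)=11$ obtained from the rank-$7$ relation matrix $S$ in the proof of Lemma \ref{TATDduality}.

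Next I would enumerate the overlap ambiguities. Since all seven leading monomials are left combs $(x\,\ast_1\,x)\,\ast_2\,x$ (a root whose left child is internal), two of them can share an internal vertex only when stacked vertically, producing arity-4 critical monomials of the single shape $((a\,\ast_1\,b)\,\ast_2\,c)\,\ast_3\,d$ in which both $(\ast_1,\ast_2)$ and $(\ast_2,\ast_3)$ are leading pairs; a short inspection of the other four arity-4 association types shows that a left-comb pattern never occurs twice with a genuine overlap there. Counting vertical stacks whose middle operation $\ast_2$ is simultaneously the outer operation of one leading pair and the inner operation of another gives $3\cdot 2 + 1\cdot 3 + 3\cdot 2 = 15$ ambiguities. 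Because the presentation is quadratic (all relations have arity $3$), the operadic diamond lemma (see \cite{LV}) guarantees that resolving these arity-4 ambiguities is sufficient: any higher ambiguity is automatically confluent once the arity-4 ones are.

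For each of the $15$ critical monomials $M=((a\,\ast_1\,b)\,\ast_2\,c)\,\ast_3\,d$ I would then form the S-polynomial by reducing $M$ in its two possible ways --- rewriting the bottom pattern $(a\,\ast_1\,b)\,\ast_2\,c$ by the relation for $(\ast_1,\ast_2)$, versus rewriting the top pattern by the relation for $(\ast_2,\ast_3)$ with $a\,\ast_1\,b$ substituted into its first argument --- carrying each branch to normal form and verifying that the two outcomes coincide. All reducts live among the right combs and the two irreducible left combs identified above, so once a branch terminates its normal form is unambiguous, and the completion procedure adds no new elements precisely when all $15$ differences reduce to zero.

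The main obstacle will be the bookkeeping for the ambiguities involving the two expansion relations, namely relation $1$ (leading term $(a\prec b)\prec c$) and relation $3$ (leading term $(a\succ b)\succ c$), each of which rewrites to a sum of three terms; stacking two such relations makes a single reduction step branch into several, and the extreme cases $(\prec,\prec,\prec)$ and $(\succ,\succ,\succ)$ generate the most terms before collapsing. These are exactly the confluences forced by the Loday--Ronco axioms, so I expect each to close after a few rounds of rewriting, but the check must be carried out systematically (by hand or, as the authors evidently did, by computer algebra after comparing all six operation orders). As an independent confirmation that the completed basis consists of precisely these seven elements, I would verify that the number of normal (irreducible) monomials in each arity $n$ reproduces $\dim\TriDend(n)$ as read off from the explicit free trioid of Loday and Ronco \cite{LRtrialgebras}.
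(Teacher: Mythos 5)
Your proposal is correct and takes essentially the same route as the paper: the paper's proof is only a pointer to the operadic Gr\"obner--Shirshov machinery of \cite{DK}, \cite{DV} and to the analogous dendriform computation in \cite{Madariaga}, and what you lay out --- an admissible order making left combs dominate, the seven left-comb leading terms $(\prec,\prec)$, $(\succ,\prec)$, $(\succ,\succ)$, $(\succ,\curlywedge)$, $(\prec,\curlywedge)$, $(\curlywedge,\prec)$, $(\curlywedge,\curlywedge)$ with all lower terms normal, and resolution of the overlap ambiguities, which by quadraticity live only in arity $4$ --- is exactly that standard verification. Your bookkeeping is accurate ($11 = 18 - 7$ normal monomials in arity $3$ matching $\dim\TriDend(3)$, overlaps confined to the left-comb association type, and $3\cdot 2 + 1\cdot 3 + 3\cdot 2 = 15$ ambiguities), and, like the authors, you defer the mechanical reduction of the $15$ S-polynomials to a systematic (computer) check rather than exhibiting it.
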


\begin{proof}
Gr\"obner bases for operads were introduced in \cite{DK}; the special case of nonsymmetric operads
appears in \cite{DV}.
Similar computations are explained in detail in \cite{Madariaga}.
\end{proof}

We use this GS basis for $\TriDend$ to compute normal forms of nonsymmetric tridendriform polynomials.
This also applies in the symmetric case, since $\STriDend$ is the symmetrization of $\TriDend$.
A monomial $m \in \STriDend(n)$ has a skeleton $s \in \TriDend(n)$ and a permutation $p \in S_n$
of the arguments $x_1, \dots, x_n$.
To find the normal form of $m$, we compute the normal form of $s$,
and then replace the arguments $x_{p(1)}, \dots, x_{p(n)}$.
For details on nonsymmetric operads and their Gr\"obner bases, see \cite{BD}.


\subsection{Relations of arity $\le 4$}

\begin{lemma}
Every multilinear polynomial identity of arity 3 for the Jordan and pre-Jordan products $\circ, \bullet$
in the free tridendriform algebra is a consequence of the commutativity of $\circ$.
\end{lemma}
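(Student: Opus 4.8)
The plan is to follow the proof of Lemma~\ref{jordantrialgebralemma} almost verbatim, representing the expansion map $E(3)\colon \BW(3) \to \STriDend(3)$ by a matrix with respect to monomial bases and computing its nullspace. The one genuinely new feature is the codomain: unlike $\STriAss$, the operad $\STriDend$ is not defined by monomial relations, so I cannot read off a basis of $\TriDend(3)$ from a set partition of the $\BBB$ monomials. Instead I would compute normal forms using the Gr\"obner--Shirshov basis of Lemma~\ref{TDGSlemma}.

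For the domain, commutativity of $\circ$ is already built into the monomial basis of the symmetric operad $\BW$: I would take the $27$ monomials of \eqref{BW3basis} and rename $\curlywedge, \prec$ as $\circ, \bullet$. Thus $\dim\BW(3)=27$, and every identity that follows from commutativity of $\circ$ alone has already been quotiented out. Proving the lemma therefore reduces to showing that $E(3)$ is injective on this $27$-dimensional space. For the codomain, recall from the proof of Lemma~\ref{TATDduality} that $\dim\BBB(3)=18$ and that the seven tridendriform relations (the rows of $S$) cut this down to $\dim\TriDend(3)=11$. Using the rewriting rules of Lemma~\ref{TDGSlemma}, the eleven normal-form skeletons are the two irreducible left-associated monomials $(a\curlywedge b)\succ c$ and $(a\prec b)\succ c$ together with the nine right-associated monomials $a\ast_1(b\ast_2 c)$; applying the six permutations of $a,b,c$ yields $\dim\STriDend(3)=66$.

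I would then assemble the $66\times 27$ matrix of $E(3)$ one column at a time: expand each $\BW$ monomial by $a\circ b\mapsto a\curlywedge b+b\curlywedge a$ and $a\bullet b\mapsto a\prec b+b\succ a$, and rewrite each resulting tridendriform term into normal form by reducing its skeleton with Lemma~\ref{TDGSlemma} and carrying the arguments along. Computing the rank over a prime field $\mathbb{F}_p$ with $p>3$, I expect to find rank $27$ and nullity $0$, which says exactly that $\ker E(3)=0$ and hence that the only arity-$3$ identities are the consequences of the commutativity of $\circ$.

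The main obstacle, relative to the triassociative calculation of Lemma~\ref{jordantrialgebralemma}, is precisely this reduction to normal form. The left-to-right expansion relation replaces a single monomial by a sum of three, so reduction is not merely a choice of representative in an equivalence class; one must apply the confluent rewriting system of Lemma~\ref{TDGSlemma} repeatedly until no leading monomial survives, and then record the coefficients of the resulting normal-form monomials. Once the matrix is correctly populated, the rank computation itself is routine linear algebra.
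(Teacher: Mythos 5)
Your proposal matches the paper's proof essentially verbatim: the same $27$-dimensional domain basis from \eqref{BW3basis}, the same use of the Gr\"obner--Shirshov basis of Lemma \ref{TDGSlemma} to identify the $11$ normal-form skeletons (hence $\dim\STriDend(3)=66$), and the same conclusion that the $66\times 27$ expansion matrix has full rank $27$, so its nullspace is zero. Your explicit identification of the eleven normal forms --- the two irreducible left-associated monomials $(a\curlywedge b)\succ c$, $(a\prec b)\succ c$ plus the nine right-associated ones --- is correct and in fact slightly more detailed than what the paper records.
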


\begin{proof}
The expansion matrix $E_3$ has size $66 \times 27$: see Table \ref{PJAexpansion3}, where the matrix is split
into top and bottom halves.
Following the proof of Lemma \ref{jordantrialgebralemma},
the columns correspond to the ordered monomial basis of the domain $\BW(3)$ of the expansion map $E(3)$;
see the proof of Lemma \ref{lietriops} where the symbols $\curlywedge, \prec$ are used instead of $\circ, \bullet$.
As for the codomain, since $\dim \BBB(3) = 18$, and the GS basis of Lemma \ref{TDGSlemma} has 7 elements,
there are 7 leading monomials which are linear combinations of the other 11 monomials,
so $\dim \TriDend(3) = 11$.
To each nonsymmetric basis monomial we apply all 6 permutations of the arguments;
this gives the ordered monomial basis of $\STriDend(3)$ corresponding to the 66 rows of $E_3$.
An easy calculation shows that $E_3$ has full rank and so its nullspace is 0.
\end{proof}

\begin{table}[ht]
\begin{center}
\scriptsize
$
\left[
\begin{array}{@{}c@{}c@{}c@{}c@{}c@{}c@{}c@{}c@{}c@{}c@{}c@{}c@{}c@{}c@{}c@{}c@{}c@{}c@{}c@{}c@{}c@{}c@{}c@{}c@{}c@{}c@{}c@{}}
+ & . & + & . & . & . & . & . & . & . & . & . & . & . & . & . & . & . & . & . & . & . & . & . & . & . & . \\[-1mm]
. & + & + & . & . & . & . & . & . & . & . & . & . & . & . & . & . & . & . & . & . & . & . & . & . & . & . \\[-1mm]
+ & + & . & . & . & . & . & . & . & . & . & . & . & . & . & . & . & . & . & . & . & . & . & . & . & . & . \\[-1mm]
. & + & + & . & . & . & . & . & . & . & . & . & . & . & . & . & . & . & . & . & . & . & . & . & . & . & . \\[-1mm]
+ & + & . & . & . & . & . & . & . & . & . & . & . & . & . & . & . & . & . & . & . & . & . & . & . & . & . \\[-1mm]
+ & . & + & . & . & . & . & . & . & . & . & . & . & . & . & . & . & . & . & . & . & . & . & . & . & . & . \\[-1mm]
. & . & . & . & . & . & + & . & . & + & . & . & . & . & . & . & . & . & . & . & . & . & . & . & . & . & . \\[-1mm]
. & . & . & . & . & . & . & . & + & . & + & . & . & . & . & . & . & . & . & . & . & . & . & . & . & . & . \\[-1mm]
. & . & . & . & + & . & . & . & . & + & . & . & . & . & . & . & . & . & . & . & . & . & . & . & . & . & . \\[-1mm]
. & . & . & . & . & . & . & + & . & . & . & + & . & . & . & . & . & . & . & . & . & . & . & . & . & . & . \\[-1mm]
. & . & . & + & . & . & . & . & . & . & + & . & . & . & . & . & . & . & . & . & . & . & . & . & . & . & . \\[-1mm]
. & . & . & . & . & + & . & . & . & . & . & + & . & . & . & . & . & . & . & . & . & . & . & . & . & . & . \\[-1mm]
. & . & . & + & . & . & . & . & + & . & . & . & . & . & . & . & . & . & . & . & . & . & . & . & . & . & . \\[-1mm]
. & . & . & . & + & . & + & . & . & . & . & . & . & . & . & . & . & . & . & . & . & . & . & . & . & . & . \\[-1mm]
. & . & . & . & . & + & . & + & . & . & . & . & . & . & . & . & . & . & . & . & . & . & . & . & . & . & . \\[-1mm]
. & . & . & . & + & . & + & . & . & . & . & . & . & . & . & . & . & . & . & . & . & . & . & . & . & . & . \\[-1mm]
. & . & . & . & . & + & . & + & . & . & . & . & . & . & . & . & . & . & . & . & . & . & . & . & . & . & . \\[-1mm]
. & . & . & + & . & . & . & . & + & . & . & . & . & . & . & . & . & . & . & . & . & . & . & . & . & . & . \\[-1mm]
. & . & . & . & . & . & . & . & . & . & . & . & + & . & . & . & . & . & + & . & . & . & . & . & . & . & . \\[-1mm]
. & . & . & . & . & . & . & . & . & . & . & . & . & + & . & . & . & . & + & . & . & . & . & . & . & . & . \\[-1mm]
. & . & . & . & . & . & . & . & . & . & . & . & . & . & + & . & . & . & . & + & . & . & . & . & . & . & . \\[-1mm]
. & . & . & . & . & . & . & . & . & . & . & . & . & . & . & + & . & . & . & + & . & . & . & . & . & . & . \\[-1mm]
. & . & . & . & . & . & . & . & . & . & . & . & . & . & . & . & + & . & . & . & + & . & . & . & . & . & . \\[-1mm]
. & . & . & . & . & . & . & . & . & . & . & . & . & . & . & . & . & + & . & . & + & . & . & . & . & . & . \\[-1mm]
. & . & . & . & . & . & . & . & . & . & . & . & + & . & . & . & . & . & . & . & . & + & . & . & . & . & . \\[-1mm]
. & . & . & . & . & . & . & . & . & . & . & . & . & + & . & . & . & . & . & . & . & . & + & . & . & . & . \\[-1mm]
. & . & . & . & . & . & . & . & . & . & . & . & . & . & + & . & . & . & . & . & . & . & . & + & . & . & . \\[-1mm]
. & . & . & . & . & . & . & . & . & . & . & . & . & . & . & + & . & . & . & . & . & . & . & . & + & . & . \\[-1mm]
. & . & . & . & . & . & . & . & . & . & . & . & . & . & . & . & + & . & . & . & . & . & . & . & . & + & . \\[-1mm]
. & . & . & . & . & . & . & . & . & . & . & . & . & . & . & . & . & + & . & . & . & . & . & . & . & . & + \\[-1mm]
. & . & . & . & . & . & . & . & . & . & . & . & + & . & . & . & . & . & . & . & . & . & + & . & . & . & . \\[-1mm]
. & . & . & . & . & . & . & . & . & . & . & . & . & + & . & . & . & . & . & . & . & + & . & . & . & . & . \\[-1mm]
. & . & . & . & . & . & . & . & . & . & . & . & . & . & + & . & . & . & . & . & . & . & . & . & + & . & .
\end{array}
\right]
\qquad\qquad\qquad
\left[
\begin{array}{@{}c@{\;}c@{\;}c@{}c@{}c@{}c@{}c@{}c@{}c@{}c@{}c@{}c@{}c@{}c@{}c@{}c@{}c@{}c@{}c@{}c@{}c@{}c@{}c@{}c@{}c@{}c@{}c@{}}
. & . & . & . & . & . & . & . & . & . & . & . & . & . & . & + & . & . & . & . & . & . & . & + & . & . & . \\[-1mm]
. & . & . & . & . & . & . & . & . & . & . & . & . & . & . & . & + & . & . & . & . & . & . & . & . & . & + \\[-1mm]
. & . & . & . & . & . & . & . & . & . & . & . & . & . & . & . & . & + & . & . & . & . & . & . & . & + & . \\[-1mm]
. & . & . & . & . & + & . & . & . & . & . & + & . & . & . & . & . & . & . & . & . & . & . & . & . & . & . \\[-1mm]
. & . & . & . & . & . & . & + & . & . & . & + & . & . & . & . & . & . & . & . & . & . & . & . & . & . & . \\[-1mm]
. & . & . & + & . & . & . & . & . & . & + & . & . & . & . & . & . & . & . & . & . & . & . & . & . & . & . \\[-1mm]
. & . & . & . & . & . & . & . & + & . & + & . & . & . & . & . & . & . & . & . & . & . & . & . & . & . & . \\[-1mm]
. & . & . & . & + & . & . & . & . & + & . & . & . & . & . & . & . & . & . & . & . & . & . & . & . & . & . \\[-1mm]
. & . & . & . & . & . & + & . & . & + & . & . & . & . & . & . & . & . & . & . & . & . & . & . & . & . & . \\[-1mm]
. & . & . & . & . & . & . & . & . & . & . & . & . & . & + & + & . & . & . & . & . & . & . & . & . & . & . \\[-1mm]
. & . & . & . & . & . & . & . & . & . & . & . & . & . & . & . & + & + & . & . & . & . & . & . & . & . & . \\[-1mm]
. & . & . & . & . & . & . & . & . & . & . & . & + & + & . & . & . & . & . & . & . & . & . & . & . & . & . \\[-1mm]
. & . & . & . & . & . & . & . & . & . & . & . & . & . & . & . & + & + & . & . & . & . & . & . & . & . & . \\[-1mm]
. & . & . & . & . & . & . & . & . & . & . & . & + & + & . & . & . & . & . & . & . & . & . & . & . & . & . \\[-1mm]
. & . & . & . & . & . & . & . & . & . & . & . & . & . & + & + & . & . & . & . & . & . & . & . & . & . & . \\[-1mm]
. & . & . & . & . & . & . & . & . & . & . & . & . & . & . & . & . & + & . & . & . & . & . & . & . & . & + \\[-1mm]
. & . & . & . & . & . & . & . & . & . & . & . & . & . & . & + & . & . & . & . & . & . & . & . & + & . & . \\[-1mm]
. & . & . & . & . & . & . & . & . & . & . & . & . & . & . & . & + & . & . & . & . & . & . & . & . & + & . \\[-1mm]
. & . & . & . & . & . & . & . & . & . & . & . & . & + & . & . & . & . & . & . & . & . & + & . & . & . & . \\[-1mm]
. & . & . & . & . & . & . & . & . & . & . & . & . & . & + & . & . & . & . & . & . & . & . & + & . & . & . \\[-1mm]
. & . & . & . & . & . & . & . & . & . & . & . & + & . & . & . & . & . & . & . & . & + & . & . & . & . & . \\[-1mm]
. & . & . & . & . & . & . & . & . & . & . & . & . & . & . & . & . & . & . & . & + & . & . & . & . & . & - \\[-1mm]
. & . & . & . & . & . & . & . & . & . & . & . & . & . & . & . & . & . & . & + & . & . & . & . & - & . & . \\[-1mm]
. & . & . & . & . & . & . & . & . & . & . & . & . & . & . & . & . & . & . & . & + & . & . & . & . & - & . \\[-1mm]
. & . & . & . & . & . & . & . & . & . & . & . & . & . & . & . & . & . & + & . & . & . & - & . & . & . & . \\[-1mm]
. & . & . & . & . & . & . & . & . & . & . & . & . & . & . & . & . & . & . & + & . & . & . & - & . & . & . \\[-1mm]
. & . & . & . & . & . & . & . & . & . & . & . & . & . & . & . & . & . & + & . & . & - & . & . & . & . & . \\[-1mm]
. & . & . & . & . & . & . & . & . & . & . & . & . & . & . & . & . & . & . & . & . & . & . & . & . & + & - \\[-1mm]
. & . & . & . & . & . & . & . & . & . & . & . & . & . & . & . & . & . & . & . & . & . & . & + & - & . & . \\[-1mm]
. & . & . & . & . & . & . & . & . & . & . & . & . & . & . & . & . & . & . & . & . & . & . & . & . & - & + \\[-1mm]
. & . & . & . & . & . & . & . & . & . & . & . & . & . & . & . & . & . & . & . & . & + & - & . & . & . & . \\[-1mm]
. & . & . & . & . & . & . & . & . & . & . & . & . & . & . & . & . & . & . & . & . & . & . & - & + & . & . \\[-1mm]
. & . & . & . & . & . & . & . & . & . & . & . & . & . & . & . & . & . & . & . & . & - & + & . & . & . & .
\end{array}
\right]
$
\end{center}
\smallskip
\caption{Top and bottom of post-Jordan expansion matrix in arity 3}
\label{PJAexpansion3}
\end{table}

\normalsize

\begin{theorem}
\label{postjordanalgebratheorem}
Over a field $\mathbb{F}$ of characteristic 0 or $p > 4$, every multilinear polynomial identity
of arity $\le 4$ satisfied by the Jordan and pre-Jordan products in the free tridendriform algebra
is a consequence of the commutativity of $\circ$ and the (linearizations of the) following 7 identities
of arity 4 (there are no new identities of arity 3):
\begin{align*}
&
  ( ( c \bullet a ) \,\circ\,   b ) \,\circ\,   d  
+ ( ( d \bullet a ) \,\circ\,   b ) \,\circ\,   c  
- ( ( b \,\circ\,   c ) \bullet a ) \,\circ\,   d  
- ( ( b \,\circ\,   d ) \bullet a ) \,\circ\,   c  
\\[-1mm]
&
\qquad
- ( ( c \,\circ\,   d ) \bullet a ) \,\circ\,   b  
+ ( ( c \,\circ\,   d ) \,\circ\,   b ) \bullet a  
\equiv 0,
\\[-1mm]
&
  ( ( b \,\circ\,   c ) \bullet a ) \,\circ\,   d  
+ ( ( b \,\circ\,   d ) \bullet a ) \,\circ\,   c  
+ ( ( c \,\circ\,   d ) \bullet a ) \,\circ\,   b  
- ( b \,\circ\,   c ) \,\circ\,   ( d \bullet a )  
\\[-1mm]
&
\qquad
- ( b \,\circ\,   d ) \,\circ\,   ( c \bullet a )  
- ( c \,\circ\,   d ) \,\circ\,   ( b \bullet a )  
\equiv 0,
\\[-1mm]
&
  ( ( a \,\circ\,   b ) \,\circ\,   d ) \,\circ\,   c  
+ ( ( a \,\circ\,   c ) \,\circ\,   d ) \,\circ\,   b  
+ ( ( b \,\circ\,   c ) \,\circ\,   d ) \,\circ\,   a  
- ( a \,\circ\,   b ) \,\circ\,   ( c \,\circ\,   d )  
\\[-1mm]
&
\qquad
- ( a \,\circ\,   c ) \,\circ\,   ( b \,\circ\,   d )  
- ( a \,\circ\,   d ) \,\circ\,   ( b \,\circ\,   c )  
\equiv 0,
\\[-1mm]
&
  ( ( c \bullet a ) \bullet b ) \,\circ\,   d  
+ ( ( d \bullet a ) \bullet b ) \,\circ\,   c  
- ( d \bullet ( a \,\circ\,   b ) ) \,\circ\,   c  
- ( d \bullet ( a \bullet b ) ) \,\circ\,   c  
\\[-1mm]
&
\qquad
- ( d \bullet ( b \bullet a ) ) \,\circ\,   c  
- ( ( c \bullet a ) \,\circ\,   d ) \bullet b  
- ( ( c \bullet b ) \,\circ\,   d ) \bullet a  
+ ( ( c \,\circ\,   d ) \bullet b ) \bullet a  
\equiv 0,
\\[-1mm]
&
  ( d \bullet ( a \,\circ\,   b ) ) \,\circ\,   c  
+ ( d \bullet ( a \bullet b ) ) \,\circ\,   c  
+ ( d \bullet ( b \bullet a ) ) \,\circ\,   c  
- ( c \bullet a ) \,\circ\,   ( d \bullet b )  
\\[-1mm]
&
\qquad
- ( c \bullet b ) \,\circ\,   ( d \bullet a )  
+ ( ( c \bullet a ) \,\circ\,   d ) \bullet b  
+ ( ( c \bullet b ) \,\circ\,   d ) \bullet a  
- ( c \,\circ\,   d ) \bullet ( a \,\circ\,   b )  
\\[-1mm]
&
\qquad
- ( c \,\circ\,   d ) \bullet ( a \bullet b )  
- ( c \,\circ\,   d ) \bullet ( b \bullet a )  
\equiv 0,
\\[-1mm]
&
  ( d \bullet ( a \,\circ\,   b ) ) \bullet c  
+ ( d \bullet ( a \,\circ\,   c ) ) \bullet b  
+ ( d \bullet ( b \,\circ\,   c ) ) \bullet a  
+ ( d \bullet ( a \bullet b ) ) \bullet c  
\\[-1mm]
&
\qquad
+ ( d \bullet ( a \bullet c ) ) \bullet b  
+ ( d \bullet ( b \bullet a ) ) \bullet c  
+ ( d \bullet ( b \bullet c ) ) \bullet a  
+ ( d \bullet ( c \bullet a ) ) \bullet b  
\\[-1mm]
&
\qquad
+ ( d \bullet ( c \bullet b ) ) \bullet a  
- ( d \bullet a ) \bullet ( b \,\circ\,   c )  
- ( d \bullet b ) \bullet ( a \,\circ\,   c )  
- ( d \bullet c ) \bullet ( a \,\circ\,   b )  
\\[-1mm]
&
\qquad
- ( d \bullet a ) \bullet ( b \bullet c )  
- ( d \bullet a ) \bullet ( c \bullet b )  
- ( d \bullet b ) \bullet ( a \bullet c )  
- ( d \bullet b ) \bullet ( c \bullet a )  
\\[-1mm]
&
\qquad
- ( d \bullet c ) \bullet ( a \bullet b )  
- ( d \bullet c ) \bullet ( b \bullet a )  
\equiv 0,
\\[-1mm]
&
  ( ( d \bullet a ) \bullet c ) \bullet b  
+ ( ( d \bullet b ) \bullet c ) \bullet a  
- ( d \bullet ( a \,\circ\,   b ) ) \bullet c  
- ( d \bullet ( a \,\circ\,   c ) ) \bullet b  
\\[-1mm]
&
\qquad
- ( d \bullet ( b \,\circ\,   c ) ) \bullet a  
- ( d \bullet ( a \bullet b ) ) \bullet c  
- ( d \bullet ( a \bullet c ) ) \bullet b  
- ( d \bullet ( b \bullet a ) ) \bullet c  
\\[-1mm]
&
\qquad
- ( d \bullet ( b \bullet c ) ) \bullet a  
- ( d \bullet ( c \bullet a ) ) \bullet b  
- ( d \bullet ( c \bullet b ) ) \bullet a  
+ d \bullet ( ( a \,\circ\,   b ) \,\circ\,   c )  
\\[-1mm]
&
\qquad
+ d \bullet ( ( a \bullet b ) \,\circ\,   c )  
+ d \bullet ( ( b \bullet a ) \,\circ\,   c )  
+ d \bullet ( ( a \,\circ\,   b ) \bullet c )  
+ d \bullet ( ( a \bullet b ) \bullet c )  
\\[-1mm]
&
\qquad
+ d \bullet ( ( b \bullet a ) \bullet c )  
+ d \bullet ( c \bullet ( a \,\circ\,   b ) )  
+ d \bullet ( c \bullet ( a \bullet b ) )  
+ d \bullet ( c \bullet ( b \bullet a ) )  
\equiv 0.
\end{align*}
\end{theorem}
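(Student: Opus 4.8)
The plan is to follow the proof of Theorem~\ref{jordantrialgebraproposition} almost step for step, changing only the codomain of the expansion map from $\STriAss$ to its Koszul dual $\STriDend$, and replacing every occurrence of ``normal form in $\TriAss$'' by ``normal form computed with the Gr\"obner--Shirshov basis of Lemma~\ref{TDGSlemma}.'' The domain is literally unchanged: I would reuse the ordered basis of the $405$ multilinear monomials of $\BW(4)$, constructed from the $25$ skeletons of Example~\ref{25skeletons} by substituting all permutations of $x_1,\dots,x_4$ and retaining only those monomials equal to their commutative normal form.

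The one genuinely new ingredient is the codomain basis. Because the defining relations of $\TriDend$ are not all monomial, I cannot split the $135$ skeletons of $\BBB(4)$ into equivalence classes by inspection, as in the triassociative case. Instead I would reduce each skeleton using the seven Gr\"obner--Shirshov relations of Lemma~\ref{TDGSlemma}, whose leading terms are $(a\prec b)\prec c$, $(a\succ b)\prec c$, $(a\succ b)\succ c$, $(a\succ b)\curlywedge c$, $(a\prec b)\curlywedge c$, $(a\curlywedge b)\prec c$, $(a\curlywedge b)\curlywedge c$; the irreducible skeletons form a basis of $\TriDend(4)$, of dimension $45$ (three times $\dim\TriAss(4)$, and a little Schr\"oder number). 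Applying all $24$ permutations of the arguments then produces the $1080$-dimensional monomial basis of $\STriDend(4)$, the codomain of $E(4)$.

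With both ordered bases fixed, I would assemble the $1080\times 405$ matrix $E_4$ column by column: expand each $\BW(4)$ monomial under $a\circ b\mapsto a\curlywedge b+b\curlywedge a$ and $a\bullet b\mapsto a\prec b+b\succ a$, reduce every resulting $\STriDend$ term to its GS normal form, and record the coefficients. Unlike the triassociative expansion, this reduction introduces entries $-1$ as well as $+1$, exactly as already visible in the arity-$3$ matrix of Table~\ref{PJAexpansion3}. Computing $\mathrm{RCF}(E_4)$ over $\mathbb{F}_{101}$ yields the rank and the nullspace $N(4)\subseteq\BW(4)$, the $S_4$-module of all multilinear arity-$4$ identities satisfied by $\circ$ and $\bullet$. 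Here the bookkeeping is in fact simpler than for Jordan trialgebras: the preceding arity-$3$ lemma shows that $E_3$ has trivial kernel, so there are no new identities of arity $3$ and hence no lower-arity consequences to subtract; since the commutativity of $\circ$ is already built into the $\BW(4)$ basis, every element of $N(4)$ is a new identity and no separate ``old identities'' module is required.

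The final step is to extract a minimal set of $S_4$-module generators of $N(4)$, which I expect to be the seven listed identities; note that the third of them is precisely the linearized Jordan identity for $\circ$, while several of the others should encode the pre-Jordan relations for $\bullet$ of Definition~\ref{jordialgdef} together with the genuinely new relations linking the two products. Exactly as at the end of the proof of Theorem~\ref{jordantrialgebraproposition}, I would test the rows of a spanning matrix for $N(4)$ one at a time, each time adjoining all $24$ permutations of the candidate and keeping it only if it raises the rank, and then verify that no one of the seven lies in the submodule generated by the other six. The main obstacle is computational rather than conceptual: the codomain is three times larger than $\STriAss(4)$, the GS reduction must be applied uniformly and correctly to every term of every expansion, and the repeated modular rank computations must finally be lifted back to integer coefficients (which, as in the introduction, turn out to lie in $\{0,\pm 1\}$).
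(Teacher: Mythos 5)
Your proposal follows essentially the same route as the paper's own (very brief) proof: the same $1080\times405$ expansion matrix built from the $405$-element basis of $\BW(4)$ and the $45$ Gr\"obner--Shirshov normal-form skeletons of $\TriDend(4)$, the same observation that the trivial kernel in arity $3$ means every nullspace vector is new, and the same greedy rank-based extraction of a minimal set of seven $S_4$-module generators. The only detail worth adjusting is your remark that some generators ``encode the pre-Jordan relations'': as the paper emphasizes, $\bullet$ does \emph{not} satisfy the pre-Jordan identities in a post-Jordan algebra; those identities emerge only after deleting every term containing $\circ$ from the last two generators.
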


\begin{proof}
All the techniques have already been discussed, so we will be very brief.
The expansion matrix $E_4$ has size $1080 \times 405$ and rank 345;
its nullity is 60, and every nonzero vector in the nullspace is a new identity,
since there are no consequences from arity 3.
We extract the canonical basis vectors for the nullspace and sort the identities
by increasing number of terms.
We find a subset of seven identities which generates the nullspace as an $S_4$-module;
none belongs to the $S_4$-module generated by the others.
These identities have 6, 6, 6, 8, 10, 18, 20 terms.
The first is the linearized Jordan identity which contains only the operation $\circ$.
Every other identity contains both operations.
If we remove every term containing $\circ$ from these seven identities, then the first five
identities become 0, and the last two identities become the defining identities in arity 4 for
pre-Jordan algebras.
\end{proof}

\begin{remark}
In a Jordan trialgebra, the Jordan product $\circ$ is commutative and satisfies the Jordan identity;
the Jordan diproduct $\bullet$ satisfies the defining relations for Jordan dialgebras.
Thus a Jordan trialgebra is a sum or split extension (roughly speaking) of a Jordan algebra by a Jordan dialgebra.
In a post-Jordan algebra, the Jordan product $\circ$ is commutative and satisfies the Jordan identity;
however, the pre-Jordan product $\bullet$ \emph{does not} satisfy the defining identities for
pre-Jordan algebras.
In order to obtain the pre-Jordan identities, we must set the Jordan product to zero: remove every term
containing $\circ$.
Thus a post-Jordan algebra is a \emph{non-split} extension of a Jordan algebra by a pre-Jordan algebra.
\end{remark}

\begin{definition}
\label{postJordandef}
Over a field $\mathbb{F}$ of characteristic 0 or $p > 4$, a vector space $J$ with bilinear operations
$\circ$ and $\bullet$ is a \textbf{post-Jordan algebra} if $\circ$ is commutative and $\circ, \bullet$
together satisfy the multilinear polynomial identities of Theorem \ref{postjordanalgebratheorem}.
In particular, $( J, \,\circ\, )$ is a Jordan algebra.
The corresponding symmetric operad is denoted $\PostJor$.
\end{definition}


\subsection{Trisuccessors}
\label{trisuccessorsection}

The operad $\PostJor$ governing post-Jordan algebras may be obtained using the techniques of
\cite{PBGN}.
As in \S\ref{triplicatorsection}, we start from the linearized Jordan identity represented
in terms of tree monomials.
We apply Algorithm \ref{tripalgorithm} but with the difference that $\ast$ no longer
represents the union $\{ \circ_1, \circ_2, \circ_3 \}$ but the sum $\circ_1 + \circ_2 + \circ_3$.
As before, we consider the example $L = \{ a,c \}$.
In the resulting tree polynomial, term 5 has a vertex labelled $\ast$, which is now replaced by
the sum of three terms $5'$, $5''$, $5'''$ obtained by substituting (operations) 1, 2, 3 for $\ast$;
the result is a tree polynomial with eight terms.
As in \S\ref{triplicatorsection}, the three new operations satisfy symmetries:
$a \,\circ_1\, b \equiv b \,\circ_3\, a$ and $a \,\circ_2\, b \equiv b \,\circ_2\, a$.
These allow us to rewrite the tree polynomial using only two operations:

\vspace{-3mm}
\begin{figure}[h]
\centering
\includegraphics[scale=1]{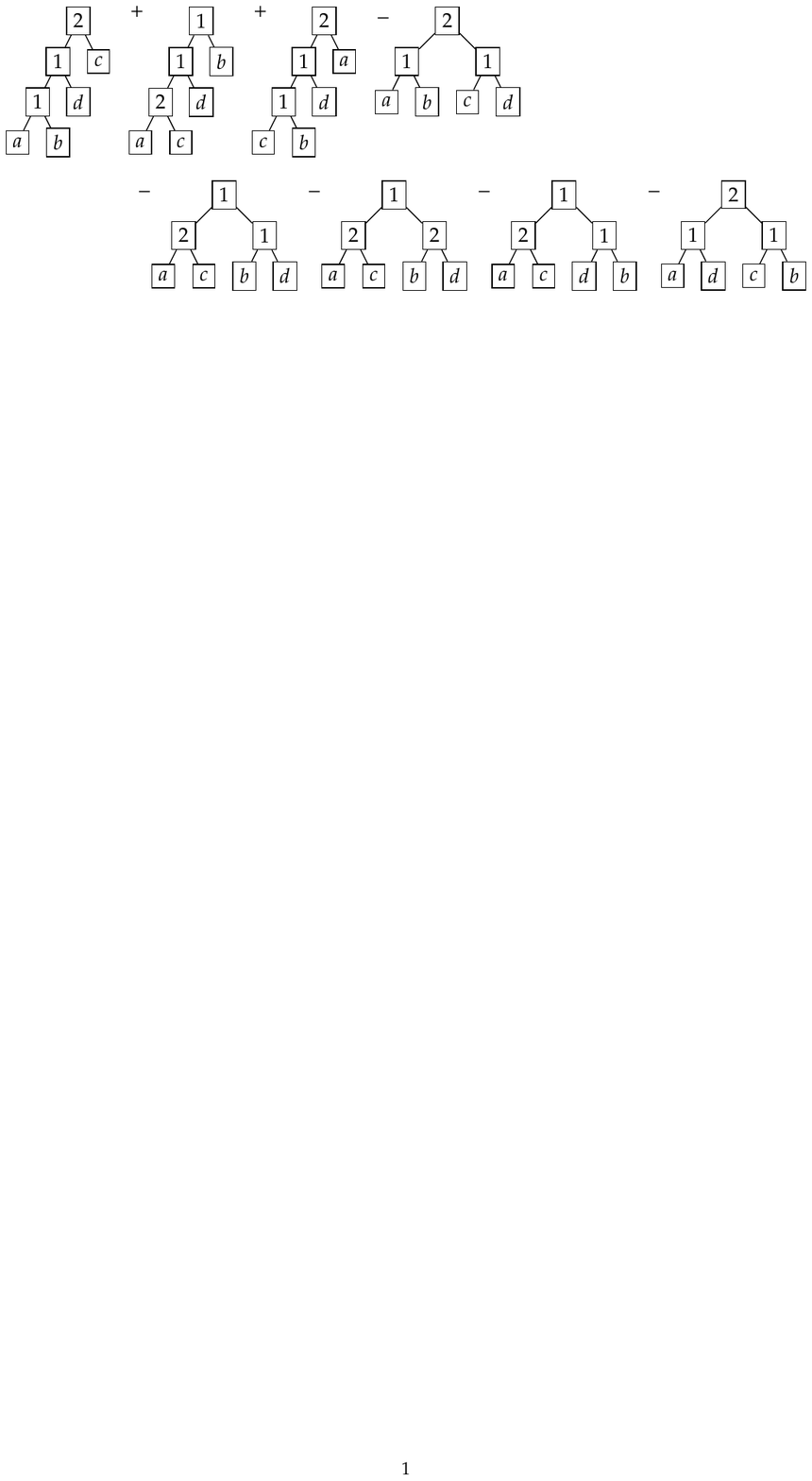}
\vspace{-2mm}
\end{figure}

\normalsize

\noindent
We replace operation 2 by $\circ$, and 1 by $\bullet$, and convert the tree monomials
to parentheses and permutations.
We perform this calculation for all nonempty $L \subseteq \{a,b,c,d\}$
and obtain multilinear identities which can be compared directly to those of
Theorem \ref{postjordanalgebratheorem}.

\begin{definition}
The operad defined by the binary operations $\circ$ and $\bullet$, where $\circ$ is commutative and
$\bullet$ has no symmetry, satisfying the multilinear relations obtained from all possible applications
of the trisuccessor algorithm to the linearized Jordan identity,
is denoted $\myfont{TriSucJor}$ and is called the \textbf{trisuccessor} of the Jordan operad $\Jor$.
\end{definition}

\begin{proposition}
Let $\PostJor$ be the symmetric operad governing post-Jordan algebras,
generated by the binary operations $\circ$ (commutative) and $\bullet$ (no symmetry)
satisfying the multilinear identities of Theorem \ref{postjordanalgebratheorem}.
Let $\myfont{TriSucJor}$ be the symmetric operad generated by the same operations but satisfying
the multilinear identities obtained by applying the trisuccessor algorithm to the linearized Jordan identity.
These two sets of identities generate the same subquotient $S_4$-module of $\BW(4)$, and hence
the two operads are isomorphic.
\end{proposition}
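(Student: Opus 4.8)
The plan is to follow the strategy already used for the triplicator proposition, adapting it to the sum-valued (black-product) version of the construction. Both $\PostJor$ and $\myfont{TriSucJor}$ are quotients of the free operad $\BW$ by operad ideals, and by the lemma preceding Theorem \ref{postjordanalgebratheorem} there are no new relations of arity 3 (commutativity of $\circ$ being built into $\BW$), so both ideals are generated entirely in arity 4. Since an operad ideal generated by an $S_4$-submodule of $\BW(4)$ meets arity 4 in exactly that submodule (compositions with the binary operations only raise the arity, and there are no lower-arity generators), equality of the two ideals is equivalent to equality of their arity-4 components. Thus the whole proposition reduces to showing that two $S_4$-submodules of the $405$-dimensional space $\BW(4)$ coincide.

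First I would produce explicit coefficient vectors in $\BW(4)$ for the defining relations of each operad. For $\PostJor$ these are the seven identities of Theorem \ref{postjordanalgebratheorem}, whose $S_4$-span is, by that theorem, the full nullspace $N(4) = \ker E(4)$ of dimension $60$. For $\myfont{TriSucJor}$ I would run the trisuccessor algorithm: apply Algorithm \ref{tripalgorithm} to the linearized Jordan tree identity for each nonempty $L \subseteq \{a,b,c,d\}$, but now expanding every $\ast$-vertex as the sum $\circ_1 + \circ_2 + \circ_3$ rather than as a union of three symbols. Using the symmetry of the Jordan identity in $a,b,c$, only the seven representative subsets $\{a\}$, $\{d\}$, $\{a,b\}$, $\{a,d\}$, $\{a,b,c\}$, $\{a,b,d\}$, $\{a,b,c,d\}$ need to be processed. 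I would then collapse the three auxiliary operations to two by means of the symmetries $a \,\circ_1\, b \equiv b \,\circ_3\, a$ and $a \,\circ_2\, b \equiv b \,\circ_2\, a$, set operation $2$ to $\circ$ and operation $1$ to $\bullet$, convert each resulting tree polynomial to a parenthesized multilinear expression, and reduce it to normal form using the straightening algorithm for $\BW$ monomials.

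Next I would assemble the two submodules as row spaces of matrices over $\mathbb{F}_p$ with $p > 4$, so that $\mathbb{F}_p S_4$ is semisimple. Let $P$ be the matrix whose rows are all $24$ permutations of each of the seven post-Jordan identities, and let $T$ be the analogous matrix built from the trisuccessor identities; their row spaces are the arity-4 ideal components $V_{\mathrm{post}}$ and $V_{\mathrm{tri}}$. The verification is then a single row-canonical-form computation: I would confirm that $\mathrm{RCF}(P) = \mathrm{RCF}(T)$, equivalently that the stacked matrix $[\,P;\,T\,]$ has the same rank $60$ as each of $P$ and $T$ alone. Because $V_{\mathrm{post}} = N(4)$ by Theorem \ref{postjordanalgebratheorem}, this equality also certifies, with no separate computation, that every trisuccessor identity is a genuine polynomial identity for the Jordan and pre-Jordan products. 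Equality of the arity-4 components forces equality of the full ideals, and hence $\PostJor \cong \myfont{TriSucJor}$.

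The main obstacle I anticipate is not the linear algebra but the faithful translation of the tree-polynomial output of the trisuccessor algorithm into $\BW(4)$ coordinates. Each sum-valued $\ast$ multiplies the number of terms, and one must apply the operation symmetries and the commutativity-straightening of $\circ$ consistently, so that distinct trees are matched with the correct normal-form monomials; a missed application of $a \,\circ_1\, b \equiv b \,\circ_3\, a$ or a mislabelled internal vertex would silently corrupt a coefficient vector and break the $\mathrm{RCF}$ comparison. Cross-checking each of the seven expansions directly against $\ker E(4)$ before forming $T$ is therefore the step demanding the most care.
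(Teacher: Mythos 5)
Your proposal is correct and takes essentially the same route as the paper: the paper's (largely implicit) proof is exactly this computer-algebra comparison, running the trisuccessor algorithm on the linearized Jordan identity for representative subsets $L$, converting the resulting tree polynomials to normal-form coefficient vectors in $\BW(4)$, and checking over $\mathbb{F}_p$ ($p>4$) that the $S_4$-module they generate coincides with the nullspace of $E(4)$ spanned by the seven identities of Theorem \ref{postjordanalgebratheorem}. Your explicit justification of the reduction step --- that both operad ideals are generated in arity 4 (no arity-3 relations beyond the commutativity built into $\BW$), so equality of the two $S_4$-submodules of $\BW(4)$ forces equality of the ideals and hence $\PostJor \cong \myfont{TriSucJor}$ --- is left tacit in the paper but is the same argument.
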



\subsection{Relations of arity $\ge 5$: a result and two conjectures}

\begin{proposition}
\label{PJA-arity56}
Every multilinear polynomial identity of arity $\le 6$ relating the Jordan and pre-Jordan products
in every tridendriform algebra is a consequence of the defining identities for post-Jordan algebras.
\end{proposition}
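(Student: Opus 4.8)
The plan is to follow verbatim the representation-theoretic method used to prove Proposition \ref{JTA-arity56}, changing only the codomain of the expansion map from $\STriAss$ to its Koszul dual $\STriDend$. For each arity $n \in \{5,6\}$ and each partition $\lambda$ of $n$, I would compute two matrices in row canonical form: the matrix $\mathrm{ALL}(n,\lambda)$ of \emph{all} identities in the isotypic component for $\lambda$, and the matrix $\mathrm{OLD}(n,\lambda)$ of those identities which are consequences of the post-Jordan defining relations of lower arity together with the skeleton symmetries. The proposition is equivalent to the assertion that $\mathrm{ALL}(n,\lambda) = \mathrm{OLD}(n,\lambda)$ for every such $n$ and $\lambda$, since agreement in every isotypic component forces the two $S_n$-modules to coincide.

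To build $\mathrm{ALL}(n,\lambda)$, I would assemble the block matrix $E_{n,\lambda}$ of \eqref{Enlambda}, whose rows are indexed by the skeletons $\BWS(n)$ and whose left-hand blocks record the representation matrices $R_\lambda(X_{i,j})$ of the components of $E(n)(\mu_i) \in \STriDend(n)$. The one genuine departure from the Jordan-trialgebra computation occurs precisely here: since not all tridendriform relations are monomial, each expansion $E(n)(\mu_i)$ must be reduced to normal form using the Gr\"obner-Shirshov basis of Lemma \ref{TDGSlemma}, by first reducing the underlying $\TriDend$ skeleton and then reinstating the permutation of the arguments. After computing $\mathrm{RCF}(E_{n,\lambda})$, the matrix $\mathrm{ALL}(n,\lambda)$ is its lower-right block.

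To build $\mathrm{OLD}(n,\lambda)$, I would form the matrix $K_{n,\lambda}$ of Definition \ref{symcondef}, whose rows are the skeleton symmetries together with all arity-$n$ consequences of the seven defining identities of Theorem \ref{postjordanalgebratheorem} (and the commutativity of $\circ$), generated by the composition formula \eqref{operadcompositions}; then $\mathrm{OLD}(n,\lambda) = \mathrm{RCF}(K_{n,\lambda})$. Since both the symmetries and the lower-arity consequences lie in the kernel of $E(n)$, the row space of $\mathrm{OLD}(n,\lambda)$ is automatically contained in that of $\mathrm{ALL}(n,\lambda)$; the real content of the proposition is the reverse inclusion, namely that no \emph{new} identities appear in arities $5$ and $6$.

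The hard part will not be conceptual but computational: for $n = 6$ the spaces $\BW(6)$ and $\STriDend(6)$ are very large, so the calculation is feasible only because the representation theory of $S_n$ splits each matrix into the much smaller blocks $E_{n,\lambda}$, one per partition, and because the Gr\"obner-Shirshov reduction can be performed once on skeletons rather than on every multilinear monomial. I would run the entire computation over a prime field $\mathbb{F}_p$ with $p > 6$, so that $\mathbb{F}_p S_n$ is semisimple, and conclude by verifying that $\mathrm{ALL}(n,\lambda)$ and $\mathrm{OLD}(n,\lambda)$ coincide for all partitions $\lambda$ of $n$ and both $n \in \{5,6\}$.
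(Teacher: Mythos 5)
Your proposal is correct and follows essentially the same route as the paper: the paper's own proof is simply ``similar to the proof of Proposition~\ref{JTA-arity56},'' i.e.\ a machine verification that $\mathrm{ALL}(n,\lambda) = \mathrm{OLD}(n,\lambda)$ for all partitions $\lambda$ of $n = 5, 6$, with the expansion map taking values in $\STriDend$ and normal forms computed via the Gr\"obner--Shirshov basis of Lemma~\ref{TDGSlemma}. Indeed, your write-up makes explicit the details (the block matrices $E_{n,\lambda}$ and $K_{n,\lambda}$, the automatic inclusion of old identities into all identities, and the modular arithmetic with $p > 6$) that the paper leaves implicit by reference.
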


\begin{proof}
Similar to the proof of Proposition \ref{JTA-arity56}.
\end{proof}

\begin{conjecture}
Over a field $\mathbb{F}$ of characteristic 0 or $p > 7$, every multilinear polynomial identity of
arity $\le 7$ satisfied by the Jordan and pre-Jordan products in every tridendriform algebra is a
consequence of the defining identities for post-Jordan algebras.
\end{conjecture}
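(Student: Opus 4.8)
The plan is to verify the conjecture computationally in arity 7 by the representation-theoretic method already used for arities $\le 6$ in Proposition \ref{PJA-arity56}. Since that proposition settles arities $3,4,5,6$, it suffices to treat arity exactly 7: I would show that for every partition $\lambda$ of 7, the matrix $\mathrm{ALL}(7,\lambda)$ of all identities equals the matrix $\mathrm{OLD}(7,\lambda)$ of consequences of the defining identities for post-Jordan algebras (Definition \ref{postJordandef}). Working over a prime field $\mathbb{F}_p$ with $p > 7$ guarantees that $\mathbb{F}_p S_7$ is semisimple, so the kernel of the expansion map $E(7)\colon \BW(7) \to \STriDend(7)$ decomposes as a direct sum of isotypic components indexed by the 15 partitions of 7, and equality of the two row-canonical-form matrices in each isotypic component establishes that every identity of arity 7 is a consequence of the defining ones.

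The first step is to assemble, for each $\lambda$, the block matrix $E_{7,\lambda}$ of \eqref{Enlambda}: its rows are indexed by the $b(7) = 2736$ skeletons of $\BWS(7)$, and the representation matrices $R_\lambda(X_{i,j})$ are obtained by expanding each skeleton under $E(7)$ and normalizing the resulting $\STriDend$ monomials. Normalization uses the Gr\"obner--Shirshov basis of Lemma \ref{TDGSlemma} to reduce each tridendriform skeleton to its normal form; enumerating the $t(7)$ basis skeletons of $\TriDend(7)$ requires the explicit description of the free trioid on one generator. From $\mathrm{RCF}(E_{7,\lambda})$ I would extract the lower-right block $\mathrm{ALL}(7,\lambda)$. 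Independently, I would build $\mathrm{OLD}(7,\lambda) = \mathrm{RCF}(K_{7,\lambda})$ as in Definition \ref{symcondef}, whose rows are the skeleton symmetries in arity 7 together with all arity-7 consequences of commutativity and the seven arity-4 identities of Theorem \ref{postjordanalgebratheorem}; these consequences are generated by iterating the composition formulas \eqref{operadcompositions} to lift the defining relations from arity 4 through arities 5 and 6 to arity 7. Comparing the two matrices partition-by-partition completes the verification.

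The hard part will be computational scale rather than any conceptual obstacle. In arity 7 the full space $\BW(7)$ has dimension $7\,577\,955$, so working with explicit monomial bases is infeasible; the representation theory replaces this by matrices of $d_\lambda \times d_\lambda$ blocks, but even the largest case $d_\lambda = 35$ yields a matrix $E_{7,\lambda}$ with roughly $2736 \times 35$ scalar rows, and the number of lifted consequences contributing to $K_{7,\lambda}$ grows rapidly with arity. Managing memory and running the row reductions over $\mathbb{F}_p$ for all 15 partitions is the principal challenge; a secondary one is confirming that the modular ranks reconstruct the correct rational answer, which in all lower arities succeeded because the relevant matrix entries lay in $\{ 0, \pm 1 \}$. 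If these computations can be carried through and $\mathrm{ALL}(7,\lambda) = \mathrm{OLD}(7,\lambda)$ holds for every partition $\lambda$ of 7, the conjecture follows exactly as stated; the expectation, by analogy with Jordan algebras where the first special (Glennie) identities appear in arity 8, is that no genuinely new identity arises before arity 8.
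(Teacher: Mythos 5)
The statement you are addressing is a \emph{conjecture} in the paper: the authors give no proof of it, and the evident reason is that the arity-7 computation your proposal describes was beyond reach (they verified arities $5$ and $6$ in Proposition \ref{PJA-arity56} and stopped there). Your plan is methodologically faithful to the paper --- decomposition of $\mathrm{kernel}(E(7))$ into isotypic components over $\mathbb{F}_p S_7$ with $p > 7$, comparison of $\mathrm{ALL}(7,\lambda)$ with $\mathrm{OLD}(7,\lambda)$ for all $15$ partitions $\lambda$, symmetries of skeletons plus lifted consequences via \eqref{operadcompositions}, and your numerical parameters ($b(7) = 2736$, $\dim \BW(7) = 7\,577\,955$, $d_\lambda \le 35$) are all correct. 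But as written it is not a proof of anything: it is a conditional roadmap whose conclusion (``if $\mathrm{ALL}(7,\lambda) = \mathrm{OLD}(7,\lambda)$ for every $\lambda$, the conjecture follows'') is precisely the content of the conjecture itself. Until the row reductions are actually executed and the equalities verified (or refuted --- note the conjecture could in principle be false, with a special identity appearing in arity 7 rather than 8; the Glennie analogy is heuristic, not a theorem), the statement remains open. A genuine proof would have to report the computation carried through, including the ranks obtained in each isotypic component.

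One technical slip worth flagging: you invoke ``the explicit description of the free trioid on one generator'' to enumerate the basis skeletons of $\TriDend(7)$, but that device belongs to the triassociative side ($\TriAss$, used for Jordan trialgebras). For the post-Jordan computation the codomain is $\STriDend(7)$, and the basis skeletons of $\TriDend(7)$ are the monomials in normal form with respect to the Gr\"obner--Shirshov basis of Lemma \ref{TDGSlemma} --- i.e., those containing no leading monomial of a GS relation as a submonomial; the same GS reduction you correctly cite for normalization also furnishes the enumeration. This does not affect the soundness of the plan, only the bookkeeping of where each tool applies.
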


\begin{conjecture}
There exist identities of arity 8 which are not consequences of the defining identities for post-Jordan
algebras, the Glennie identities for Jordan algebras \cite{Glennie}, and the special identities of arity 8
for pre-Jordan algebras \cite{BM}.
\end{conjecture}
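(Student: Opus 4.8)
The plan is to settle this by the same representation-theoretic computation used in Proposition \ref{PJA-arity56}, pushed up to arity $8$ and now seeking an \emph{inequality} rather than an equality. For each partition $\lambda$ of $8$ I would build the matrix $E_{8,\lambda}$ of \eqref{Enlambda}, using the $b(8)=14396$ skeletons of $\BWS(8)$ for the block-rows and the normal forms in $\STriDend(8)$ for the block-columns; the tridendriform skeletons occurring in each expansion $E(8)(\mu_i)$ are normalized with the Gröbner--Shirshov basis of Lemma \ref{TDGSlemma} (or, in this arity, the explicit free trioid). Row-reducing $E_{8,\lambda}$ over a prime field $\mathbb{F}_p$ with $p>8$ yields $\mathrm{ALL}(8,\lambda)$, the isotypic component of $\ker E(8)$ for $\lambda$. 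The conjecture is equivalent to the assertion that, for at least one $\lambda$, this module is strictly larger than the module generated by the three families of already-known identities.

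Assembling the matrix of old identities is the step that differs most from arities $5,6$, since it now has three contributions rather than one. First, the consequences in arity $8$ of the seven post-Jordan defining identities of Theorem \ref{postjordanalgebratheorem}, obtained by iterating the composition rules \eqref{operadcompositions} with both $\circ$ and $\bullet$ four times, exactly as $\mathrm{con}(n)$ is formed in Definition \ref{symcondef}. Second, the (linearized) Glennie identities \cite{Glennie}: because middle associativity makes $\curlywedge$ associative, $a\circ b=a\curlywedge b+b\curlywedge a$ is an honest anticommutator, so $(J,\circ)$ is special and the Glennie identities lie in $\ker E(8)$; being multilinear of arity $8$ already, they contribute their full $S_8$-span with every product read as $\circ$. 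Third, the special pre-Jordan identities of arity $8$ from \cite{BM}, contributed likewise as their $S_8$-span with every product read as $\bullet$. I would include the skeleton symmetries of Definition \ref{symcondef} as well, adjoin all these generators to $K_{8,\lambda}$, project through $R_\lambda$, and set $\mathrm{OLD}(8,\lambda)=\mathrm{RCF}(K_{8,\lambda})$. A necessary preliminary is to confirm that each family really lies in $\ker E(8)$; for Glennie this is immediate, but for the pre-Jordan family it is not, since $(\prec,\succ)$ need not form a dendriform subalgebra, so one should first apply $E(8)$ to each pre-Jordan generator and verify it vanishes before merging it into $\mathrm{OLD}$.

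With both matrices in row canonical form, the conjecture follows the moment some $\lambda$ exhibits $\mathrm{rank}\,\mathrm{ALL}(8,\lambda)>\mathrm{rank}\,\mathrm{OLD}(8,\lambda)$. The surviving rows, lifted back through the matrix units of $\mathbb{F}_pS_8$ and read as $\BW$ monomials, are the promised new identities; I would certify that they are genuinely new by checking that they do not lie in the row space of $\mathrm{OLD}(8,\lambda)$ even after all $8!$ permutations are adjoined. To guard against modular coincidences I would repeat the whole comparison for a second prime and reconstruct the surviving generators over $\mathbb{Q}$.

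The hard part will be the size of the computation rather than any conceptual difficulty: with $d_\lambda$ as large as $90$ for $S_8$, the blocks $R_\lambda(X_{i,j})$ make $E_{8,\lambda}$ enormous (on the order of $14396\,d_\lambda$ rows), so sparse storage, Clifton's algorithm \cite{Clifton} for the representation matrices, and processing one partition at a time are all essential. A secondary obstacle is the correctness of the bookkeeping: the Glennie and pre-Jordan special identities must be embedded into $\BW(8)$ under exactly the monomial basis, ordering, and sign conventions used for the post-Jordan consequences, since a single indexing error would either hide a genuine new identity or manufacture a spurious one. If, contrary to expectation, every $\lambda$ gave equality, the conjecture would be false and the search would have to move to arity $9$.
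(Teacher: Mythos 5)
The statement you are addressing is one of the paper's open \emph{conjectures}: the paper offers no proof of it, and neither does your proposal. What you have written is a computational protocol for \emph{testing} the conjecture, not an argument establishing it. An existence claim of this kind is proved only by actually exhibiting an arity-8 identity outside the module of known consequences (as Glennie did for Jordan algebras by producing explicit identities), or by an abstract counting argument; your plan defers entirely to a future rank comparison $\mathrm{rank}\,\mathrm{ALL}(8,\lambda)>\mathrm{rank}\,\mathrm{OLD}(8,\lambda)$, and your own closing sentence concedes the computation could come out the other way. Until the computation is carried out and a surviving row is certified, nothing has been proved. It is also worth being honest about feasibility: $\dim\BW(8)=7\,577\,955$, $b(8)=14396$, and $t(8)$ is in the tens of thousands, so even per-partition the matrices \eqref{Enlambda} dwarf by orders of magnitude the arity-6 computations of Proposition \ref{PJA-arity56}; the paper stops at arity 6 for precisely this reason.

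There is also a soundness defect in your construction of $\mathrm{OLD}(8,\lambda)$. Your treatment of the Glennie family is fine: middle associativity makes $\curlywedge$ associative, so the linearized Glennie identities written in $\circ$ alone do lie in $\ker E(8)$. But the special pre-Jordan identities of \cite{BM} are identities for $a\prec b+b\succ a$ in \emph{dendriform} algebras, and in a tridendriform algebra the pair $(\prec,\succ)$ is not dendriform --- the left and right expansions acquire extra $\curlywedge$ terms. Indeed the paper's remark after Theorem \ref{postjordanalgebratheorem} makes exactly this point already in arity 4: the operation $\bullet$ satisfies the pre-Jordan identities only after one sets $\circ=0$, because a post-Jordan algebra is a non-split extension. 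So the expected outcome of the vanishing check you propose is \emph{failure}, and you give no fallback: adjoining non-kernel rows to $K_{8,\lambda}$ would make $\mathrm{OLD}(8,\lambda)$ no longer a submodule of $\mathrm{ALL}(8,\lambda)$, rendering the rank comparison meaningless. To make the conjecture's phrase ``consequences of \dots the special identities of arity 8 for pre-Jordan algebras'' computationally precise, one must work modulo the ideal generated by $\circ$ (i.e., in the associated graded structure where $\bullet$ becomes genuinely pre-Jordan), or lift the pre-Jordan special identities to elements of $\ker E(8)$ containing $\circ$-correction terms; your proposal does neither, and this reformulation is the actual missing idea, independent of the computation's scale.
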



\section{Concluding remarks}
\label{sectionconclusion}


\subsection{Koszul duality for nonquadratic operads}
\label{nonquadratickoszulduality}

Koszul duality for associative algebras has been extended \cite{DV} from the quadratic case ($n = 2$)
to the $n$-homogeneous case $(n > 2)$.
If a similar extension exists of Koszul duality for operads, it could be applied to the operads
governing the Jordan structures studied in this paper.
There is another approach: introduce a new ternary operation to lower the weight of the relations.
Consider an operad generated by a commutative binary operation $ab$.
If we define a new ternary operation by $(a,b,c) = (ab)c$ then every binary monomial of weight 2
equals a ternary monomial of weight 1, and every binary monomial of weight 3 equals a binary-ternary
monomial of weight 2.
Any cubic relation in the binary operation can be rewritten as a quadratic relation in both operations.
For example, consider the linearized Jordan identity stated at the start of \S\ref{triplicatorsection}.
We replace monomials of the forms $((wx)y)z$ and $(wx)(yz)$ by $(wx,y,z)$ and $(w,x,yz)$ respectively:
\[
(ab,d,c) + (ac,d,b) + (bc,d,a) - (a,b,cd) - (a,c,bd) - (b,c,ad) \equiv 0.
\]
This relation is quadratic: each term involves one binary operation and one ternary operation.
However, we must also include the relation $(a,b,c) - (ab)c \equiv 0$, which is homogeneous in the arity,
but inhomogeneous in the weight.
To go further, we need the theory of inhomogeneous Koszul duality for operads \cite{LV}.
The Koszul dual cooperad will be a differential graded cooperad with a nonzero differential.


\subsection{A commutative diagram: isomorphism of reconfigured operads}

A conjecture relating the polynomial identities produced by the KP algorithm (duplicators) to those satisfied
by the operations produced by the BSO algorithm (diproducts) was stated in \cite{BFSO}; a year later the
conjecture was reformulated and proved by \cite{KV}.
Those papers deal exclusively with dialgebras, duplicators, and diproducts; there should be a generalization
to trialgebras, and then to pre-algebras and post-algebras.
The generalized conjecture would state, roughly speaking, that Table \ref{generalizingoperads} is a commutative
diagram of morphisms between operads.
We attempt to state the generalization as precisely as possible.

\begin{conjecture}
The following diagram of morphisms between operads commutes:
\begin{equation}
\begin{array}{ccccc@{\,}c@{\;}c}
\mathcal{P} &
\xrightarrow{\qquad \Omega \qquad} &
\mathcal{P}_\Omega &
\multicolumn{3}{c}{\xrightarrow{\qquad\qquad \cong \qquad\qquad}}
&
\mathcal{P}'_{\Omega'}
\\
\;\Bigg\downarrow\;\widehat{\,} &
\;\Bigg\Downarrow\;\widehat{\,} &
\Bigg\downarrow\;\widehat{\,}
&&&&
\Bigg\downarrow\;\widehat{\,}
\\
\widehat{\mathcal{P}} &
\xrightarrow{\qquad \widehat\Omega \qquad} &
\widehat{\mathcal{P}}_{\widehat\Omega} &
\xrightarrow{\qquad \cong \qquad} &
{\widehat{\mathcal{P}}}'_{{\widehat\Omega}'}
&
\stackrel{?}{=}
&
\widehat{\mathcal{P}'_{\Omega'}}
\end{array}
\end{equation}
The symbols in this diagram are defined as follows:
\begin{enumerate}[(1)]
\item
We start by considering operads defined in terms of operations in an operad $\mathcal{P}$.
\begin{enumerate}[(a)]
\item
Let $\mathcal{P}$ be symmetric, not necessarily binary or quadratic, and in the category $\VectF$.
\item
Let $\textbf{A}$ be the category of $\mathcal{P}$-algebras.
\item
Let $\omega_i \in \mathcal{P}(n_i)$ for $1 \le i \le m$ be (multilinear) operations in $\mathcal{P}$, which
we regard as new operations defined on the underlying vector spaces of the $\mathcal{P}$-algebras in $\textbf{A}$.
\item
For example, if $m = 1$, $n_1 = 2$ then $\omega_1$ could be the Lie bracket $ab-ba$ or the Jordan
product $ab + ba$, both of which have (skew-)symmetry.
\item
Let $\Omega = \{ \omega_1, \dots, \omega_m \}$ and let $\mathcal{P}_\Omega$ be the suboperad of $\mathcal{P}$
generated by $\Omega$.
\item
Let $R$ be the set of all relations satisfied by the operations $\Omega$ in $\mathcal{P}$.
That is, $R$ consists of the polynomial identities satisfied by the operations $\Omega$ in every $\mathcal{P}$-algebra.
\item
To separate $\mathcal{P}_\Omega$ from its embedding into $\mathcal{P}$, we make a copy,
$\Omega' = \{ \omega'_1, \dots, \omega'_m \}$: symbols representing abstract multilinear operations of
arities $n_1, \dots, n_m$.
\item
By definition, the operad $\mathcal{Q} = \mathcal{P}'_{\Omega'}$ has generators $\Omega'$ with the same symmetries
as those of $\Omega$, satisfying relations $R'$ which are copies of the relations $R$.
\item
For example, if $m = 1$, $n_1 = 2$, $\omega_1 = ab-ba$ then $\mathcal{Q} = \mathcal{P}'_{\Omega'}$ is
the Lie operad, since every identity satisfied by the Lie bracket in every associative algebra is a consequence
of anticommutativity and the Jacobi identity.
\item
On the other hand, if $m = 1$, $n_1 = 2$, $\omega_1 = ab+ba$ then $\mathcal{Q} = \mathcal{P}'_{\Omega'}$
is not the Jordan operad: the Glennie identities are satisfied by the Jordan product in every associative
algebra but are not consequences of commutativity and the Jordan identity.
\item
For an integer $d \ge 1$, the operad $\mathcal{Q}_d = \mathcal{P}'_{\Omega',d}$ is defined as
$\mathcal{Q} = \mathcal{P}'_{\Omega'}$ except that $R'$ includes only (copies of) the relations in $R$
of arity $\le d$.
\item
If $m = 1$, $n_1 = 2$, $\omega_1 = ab+ba$, $4 \le d \le 7$ then
$\mathcal{Q}_d = \mathcal{P}'_{\Omega',d}$ is the Jordan operad.
\end{enumerate}
\item
Next, we assume that we have an algorithm which takes as input an operad $\mathcal{P}$, and produces as output a
``reconfigured'' operad $\widehat{\mathcal{P}}$.
For example, the KP algorithm produces, from a given category of algebras, the corresponding category
of dialgebras.
In terms of operads, this is the Manin white product with $\Perm$.
\item
Finally, we assume that we have a corresponding algorithm for operations, also denoted by hat, which
constructs $k(i)$ ``reconfigured'' operations $\widehat{\omega_{i,j_1}}, \dots, \widehat{\omega_{i,j_k(i)}}$
from each $\omega_i$.
For simplicity, we assume that $k(i)$ equals the arity of $\omega_i$.
For example, if $\omega_i$ is the Lie bracket then the BSO algorithm produces two ``reconfigured'' operations,
the left and right Leibniz products, $\omega_{i,1} = a \dashv b - b \vdash a$ and
$\omega_{i,2} = a \vdash b - b \dashv a$.
\end{enumerate}
The conjecture amounts briefly to the statement that the ``hat'' and the ``prime'' commute.
\end{conjecture}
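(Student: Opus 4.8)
The plan is to reduce the commutativity of the diagram to the functoriality of Manin products, using the identification — established for all the cases of interest in \S\ref{sectionManin} — of each ``hat'' functor on operads with a white or black product against a fixed operad $\mathcal{T} \in \{ \Perm, \PreLie, \ComTriAss, \PostLie \}$, which I will write uniformly as $\widehat{\mathcal{P}} = \mathcal{T} \star \mathcal{P}$ with $\star \in \{ \circ, \bullet \}$. First I would dispose of the two tautological horizontal isomorphisms. The top one, $\mathcal{P}_\Omega \cong \mathcal{P}'_{\Omega'}$, holds by construction: since $R$ is the set of \emph{all} identities satisfied by $\Omega$, the suboperad generated by $\Omega$ is precisely the free operad on the abstract copy $\Omega'$ modulo the copied relations $R'$. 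The identical argument gives the bottom isomorphism $\widehat{\mathcal{P}}_{\widehat\Omega} \cong \widehat{\mathcal{P}}'_{\widehat\Omega'}$. Applying the hat functor to the top isomorphism (functors preserve isomorphisms) and composing with the bottom one, the only substantive content of the conjecture collapses to the single equality $\widehat{\mathcal{P}_\Omega} = \widehat{\mathcal{P}}_{\widehat\Omega}$.

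By definition of the hat functor on operads, the left-hand side is $\mathcal{T} \star \mathcal{P}_\Omega$, so the claim to be proved is
\[
\mathcal{T} \star \mathcal{P}_\Omega \;=\; \widehat{\mathcal{P}}_{\widehat\Omega},
\qquad\text{i.e.}\qquad
\mathcal{T} \star ( \mathcal{P}_\Omega \subseteq \mathcal{P} )
\;=\;
\bigl( \text{suboperad of } \mathcal{T} \star \mathcal{P} \text{ generated by } \widehat\Omega \bigr).
\]
Functoriality of $\mathcal{T} \star (-)$ applied to the inclusion $\mathcal{P}_\Omega \hookrightarrow \mathcal{P}$ produces a morphism $\mathcal{T} \star \mathcal{P}_\Omega \to \mathcal{T} \star \mathcal{P} = \widehat{\mathcal{P}}$, and I would check that the binary generators of the source match the reconfigured operations $\widehat\Omega$ (this is exactly the compatibility of the operadic product with the combinatorial BSO/triplicator/trisuccessor rule on $\Omega$). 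It then remains to show that this morphism is \emph{injective} and \emph{surjects onto} $\widehat{\mathcal{P}}_{\widehat\Omega}$.

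The hard part will be precisely this last identification, which is the operadic content of the theorem of \cite{KV} (conjectured in \cite{BFSO}) in the dialgebra/duplicator case and is open in general. Two obstacles must be confronted. First, injectivity is not formal: Manin products are not exact functors, so the preservation of the monomorphism $\mathcal{P}_\Omega \hookrightarrow \mathcal{P}$ must be verified directly, choosing $\star = \circ$ or $\star = \bullet$ according to whether the reconfiguration is of ``limit'' (white) or ``colimit'' (black) type and exploiting the defining description of that product as, respectively, a sub-object or a quotient of the Hadamard product. Second, and most delicately, $\mathcal{P}'_{\Omega'}$ carries \emph{all} identities of $\Omega$; for $\omega_1 = ab + ba$ it is therefore the \emph{special} Jordan operad rather than the Jordan operad, so its relations include the Glennie identities \cite{Glennie} of arity $8$. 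Since the reconfiguration algorithm is naturally specified from a chosen set of defining relations, one must argue that feeding it the full relation set $R$ introduces no identities beyond those already forced by $\mathcal{T} \star \mathcal{P}_\Omega$.

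A further layer of difficulty is that the conjecture permits $\mathcal{P}$ to be non-binary and non-quadratic, whereas the functoriality and exactness facts above are classical only for binary quadratic operads. To cover the general case I would invoke the inhomogeneous/higher-weight framework sketched in \S\ref{nonquadratickoszulduality} (or, more concretely, a shuffle-operad reformulation in which the relevant products and their exactness can be controlled combinatorially), and re-prove the mono-preservation in that setting. I regard the surjectivity-onto-$\widehat{\mathcal{P}}_{\widehat\Omega}$ step as the genuine bottleneck; the arity-$\le 6$ computations of Propositions \ref{JTA-arity56} and \ref{PJA-arity56}, together with the isomorphisms $\TriJor \cong \myfont{TripJor}$ and $\PostJor \cong \myfont{TriSucJor}$, establish the equality in the first nontrivial instances and suggest that any obstruction is concentrated entirely in the higher-arity special identities, which is where I would focus a full proof attempt.
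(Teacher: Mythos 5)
This statement is a \emph{conjecture} in the paper: the authors supply no proof, and the ``$\stackrel{?}{=}$'' in the diagram is precisely the part they cannot establish (it is known only in the dialgebra/duplicator case, by \cite{KV}, as conjectured in \cite{BFSO}). So the question is whether your proposal actually closes that gap, and it does not. Your two ``tautological'' steps are fine: since $R$ is by definition the set of \emph{all} identities of $\Omega$ in $\mathcal{P}$, the surjection from the free operad on $\Omega'$ onto $\mathcal{P}_\Omega$ has kernel exactly $\langle R' \rangle$, giving $\mathcal{P}_\Omega \cong \mathcal{P}'_{\Omega'}$, and likewise on the bottom row. But after that reduction, what remains --- the equality $\mathcal{T} \star \mathcal{P}_\Omega = \widehat{\mathcal{P}}_{\widehat\Omega}$, i.e.\ injectivity plus surjectivity of the comparison morphism --- \emph{is} the conjecture, not a lemma en route to it. Your text explicitly defers this (``the genuine bottleneck \dots where I would focus a full proof attempt''), so the proposal is a reformulation with a plan attached, not a proof.

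There are also two concrete soft spots in the reduction itself. First, you apply ``the hat functor'' to the top isomorphism and invoke ``functors preserve isomorphisms,'' but the hat is not given as a functor on operads: the KP/BSO/triplicator/trisuccessor algorithms (\S\ref{triplicatorsection}, \S\ref{trisuccessorsection}) take as input a \emph{presentation} --- a chosen set of relations --- and isomorphism-invariance of their output is part of what must be proven. Indeed the paper's own propositions $\TriJor \cong \myfont{TripJor}$ and $\PostJor \cong \myfont{TriSucJor}$, and Propositions \ref{JTA-arity56} and \ref{PJA-arity56}, are exactly such invariance statements verified computationally in low arity. Second, the identification $\widehat{\mathcal{P}} = \mathcal{T} \star \mathcal{P}$ with $\mathcal{T} \in \{ \Perm, \PreLie, \ComTriAss, \PostLie \}$ is established only for binary quadratic operads, whereas the operad you must feed into it, $\mathcal{P}'_{\Omega'}$, is in the crucial Jordan example the \emph{special} Jordan operad, which is cubic and carries the arity-$8$ Glennie relations \cite{Glennie}; so even the expression $\mathcal{T} \star \mathcal{P}'_{\Omega'}$ is undefined without the extension of Manin products you only gesture at via \S\ref{nonquadratickoszulduality}. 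In short: your framing agrees with how the authors intend the conjecture to be read, and your diagnosis of where the difficulty lies is accurate, but no step beyond the tautological ones is actually carried out, so the statement remains exactly as open as the paper leaves it.
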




\end{document}